\documentclass[11pt,letterpaper]{article}

\usepackage[T1]{fontenc}
\usepackage{microtype}
\usepackage[margin=1in]{geometry}

\usepackage{mathtools}
\usepackage{amssymb}
\usepackage{amsthm}
\usepackage{algorithm}
\usepackage[noend]{algpseudocode}
\usepackage{array}
\usepackage{multirow}
\usepackage{booktabs}
\usepackage{longtable}
\usepackage{float}
\usepackage{tikz}
\usetikzlibrary{decorations.pathreplacing}

\usepackage{natbib}
\usepackage{hyperref}
\usepackage[nameinlink,noabbrev]{cleveref}
\usepackage{xcolor}
\usepackage{footnotehyper}
\usepackage{etoolbox}

\makeatletter
\providecommand*{\theHALG@line}{}
\renewcommand*{\theHALG@line}{\thealgorithm.\arabic{ALG@line}}
\makeatother

\allowdisplaybreaks
\numberwithin{equation}{section}

\definecolor{refbox}{RGB}{50,240,50}
\definecolor{citebox}{RGB}{50,240,50}

\hypersetup{
    colorlinks=false,
    pdfborder={0 0 0.6},
    linkbordercolor=refbox,
    citebordercolor=citebox
}

\theoremstyle{plain}
\newtheorem{theorem}{Theorem}

\newtheorem{lemma}{Lemma}

\theoremstyle{definition}

\theoremstyle{remark}

\theoremstyle{plain}
\newtheorem{remark}{Remark}

\newcommand{\NN}{\mathbb{N}}
\newcommand{\RR}{\mathbb{R}}

\newcommand{\EE}{\mathbb{E}}
\newcommand{\PP}{\mathbb{P}}

\newcommand{\cA}{\mathcal{A}}
\newcommand{\cC}{\mathcal{C}}
\newcommand{\cD}{\mathcal{D}}
\newcommand{\cI}{\mathcal{I}}
\newcommand{\cL}{\mathcal{L}}
\newcommand{\cO}{\mathcal{O}}
\newcommand{\cP}{\mathcal{P}}
\newcommand{\cR}{\mathcal{R}}
\newcommand{\cS}{\mathcal{S}}

\DeclareMathOperator*{\argmax}{arg\,max}
\DeclareMathOperator{\OPT}{OPT}
\DeclareMathOperator{\ALG}{ALG}
\DeclareMathOperator{\LP}{LP}
\DeclareMathOperator{\suchthat}{s.t.}

\newcommand{\one}{\mathbf{1}}

\newcommand{\ST}{\mathrm{ST}}
\newcommand{\FS}{\mathrm{FS}}
\newcommand{\MMNL}{\mathrm{MMNL}}
\newcommand{\D}{\mathrm{D}}
\newcommand{\C}{\mathrm{C}}
\newcommand{\M}{\mathrm{M}}
\newcommand{\AC}{\mathrm{AC}}
\newcommand{\BM}{\mathrm{BM}}

\usepackage{threeparttable}

\title{A Unified Framework for Assortment Optimization and Matching under MNL Choice}

\author{Omar El Housni \qquad Ulysse Hennebelle\\[0.4em]
  \normalsize\itshape School of Operations Research and Information Engineering,
  Cornell Tech, Cornell University\\
  \normalsize\itshape \{oe46,uh34\}@cornell.edu
}

\date{\today}

\begin{document}
\maketitle

\begin{abstract}
We develop a unified framework for assortment optimization
and matching under multinomial logit (MNL) choice, yielding
simple algorithms and analyses across a broad class of problems.
The framework centers on a single-customer assortment problem
in which the firm commits to eligible products before their
independent availabilities are revealed.
A simple threshold policy achieves a tight $1/2$ guarantee
against a benchmark that selects its assortment after observing
availability and may control dependence subject to the same
marginal probability bounds. This guarantee accommodates
cardinality constraints and adversarial removal of excess
available products. Combined with problem-specific linear
programming relaxations, the framework yields $1/2$ guarantees
for standard two-sided assortment optimization, customization
under mixtures of MNL models, ride-hailing notifications,
multi-round broadcast matching, and MNL prophet inequalities,
and a $1/4$ guarantee for fully static two-sided assortment
optimization. Across these applications, we improve
approximation factors, extend guarantees to more general
revenues and cardinality constraints, and retain existing
factors against stronger benchmarks.
\end{abstract}

\section{Introduction}\label{section:introduction}

An assortment is a set of alternatives made available for selection, and assortment optimization concerns choosing which alternatives to include to maximize an objective such as expected revenue. Such decisions arise, for example, in online retail, where a firm chooses which products to offer to customers. They also arise on choice-based matching platforms, where the platform presents potential partners to agents on both sides of a market, and agents choose among the alternatives offered to them. In both settings, a decision-maker controls the offered sets, while the choices made from those sets determine which transactions occur. A substantial literature studies these decisions under the multinomial logit (MNL) choice model, both in assortment optimization and in matching markets (\citealp{talluri04,rusmevichientong2010dynamic,davis2013assortment,rusmevichientong2014assortment,ashlagi_etal19,housni2024two}). In these problems, alternatives compete for selection, so their contributions to expected revenue cannot be evaluated in isolation. Offering an additional alternative creates a new opportunity for a transaction, but also diverts demand from existing alternatives.

Many problems in the literature on assortment optimization and choice-based matching involve uncertainty not only about the final choice, but also about which alternatives will be available for a transaction.
For example, in a sequential two-sided market, customers first choose suppliers, and each supplier then chooses among the customers who selected it \citep{ashlagi_etal19,torrico21,nissim2024revenue}.
The set of potential partners available to a supplier is therefore random and depends on earlier assortment decisions.
Related challenges arise in ride-hailing notifications \citep{ekbatani2026non}, where a platform decides whom to notify before observing acceptances, and in online assortment selection, where product characteristics are revealed sequentially \citep{goyal2023mnl}.
Although these models differ in their operational details, they share several features: selection decisions are made with incomplete information, the alternatives that can be offered depend on random outcomes, and feasibility constraints limit the sets that can ultimately be presented.
Their analysis must account for these sources of uncertainty and the choices made from the resulting sets.

Our goal is to develop a simple analytical and algorithmic tool that exploits this shared structure and provides a unified approach to a broad class of MNL-based optimization problems. We use this framework to simplify the analysis of existing models and design algorithms with stronger approximation guarantees. We build the tool around a single-customer assortment problem with random product availability and an advance commitment on which products can be offered. This formulation captures a common structure across distinct models: a product may represent a retail item or a potential matching partner, while availability may arise from earlier choices or from algorithmic randomization. By comparing the value of an advance commitment with that of selection decisions made after availability is revealed, we derive a general guarantee that serves as a reusable building block across many settings and models in the literature.

\subsection{Problem description}
Consider a firm offering products to a single customer from a universe of substitutable products $[n]=\{1,\ldots,n\}$, where each product $i$ has a nonnegative revenue $r_i$ and a nonnegative preference weight $v_i$.
The displayed assortment may contain at most $k$ products, where $k$ is a positive integer, with the convention $k=\infty$ when no display limit is imposed.
When offered an assortment $T\subseteq[n]$, the customer chooses
according to the MNL choice model: product $i\in T$ is selected
with probability $\phi_{\mathbf v}(i,T)=v_i/(1+\sum_{j\in T}v_j)$,
and the exit option is selected with the remaining probability.
The exit option has preference weight one and generates no revenue.
Thus, the firm's expected revenue from displaying $T$ is
\[
    R_{{\mathbf{r}},{\mathbf{v}}}(T)=\sum_{i\in T} r_i \phi_{\mathbf{v}}(i,T)=\frac{\sum_{i\in T}r_iv_i}
                 {1+\sum_{i\in T}v_i}.
\]
We simply write $R(T)$ when the dependency on the revenue vector $\mathbf{r}$ and the preference weight vector $\mathbf{v}$ is clear from the context.
We consider a setting where product $i$ is available for display with probability $x_i$. We denote the vector of availability probabilities as ${\mathbf{x}}=(x_1,\ldots,x_n)\in[0,1]^n$. We introduce the following three optimization problems.

\paragraph{Committed firm.}
Each product $i$ is independently available with probability $x_i$.
Let $A\subseteq[n]$ denote the random set of available products.
Before observing $A$, the firm chooses a set $S\subseteq[n]$ of products eligible to be displayed; the size of this initial set is unrestricted.
If $|S\cap A|\le k$, all products in $S\cap A$ are displayed.
Otherwise, an adversary selects the $k$ products in $S\cap A$ whose display minimizes the firm's expected revenue.
We call this rule \emph{adversarial truncation} and write
\[
    R_k^-(B)
    =
    \min_{\substack{
        T\subseteq B\\
        |T|=\min\{k,|B|\}
    }}
    R(T)
\]
for the revenue obtained by applying it to a set $B\subseteq[n]$.
Writing $\cI(\mathbf{x})$ for the distribution of $A$, the
committed firm's optimal expected revenue is
\begin{equation}\tag{\sf Committed}\label{eq:opt_c}
    \OPT_{\C}
    =
    \max_{S\subseteq[n]}
    \EE_{A\sim\cI(\mathbf{x})}\bigl[R_k^-(S\cap A)\bigr].
\end{equation}

\paragraph{Delayed firm.}
Like the committed firm, the delayed firm faces independent product availabilities, with each product $i$ available with probability $x_i$. The delayed firm observes the available set $A$ before choosing an assortment of at most $k$ products from this set. For any $B\subseteq[n]$, define
\[
    R_k^+(B)
    =
    \max_{\substack{T\subseteq B\\|T|\le k}}R(T),
\]
the maximum revenue obtainable by offering at most $k$ products from $B$. The delayed firm's optimal expected revenue is therefore
\begin{equation}\tag{\sf Delayed}\label{eq:opt_d}
    \OPT_{\D}
    =
    \EE_{A\sim\cI(\mathbf{x})}\bigl[R_k^+(A)\bigr].
\end{equation}
We refer to the ability to tailor the displayed assortment to the realized available set as \emph{customization}.

\paragraph{Magician benchmark.}
The magician provides a benchmark for both firms. Like the delayed firm, it may choose an assortment of at most $k$ products after observing the available set. In addition, it may choose the joint distribution of product availabilities, subject to an upper bound $x_i$ on each product $i$'s availability probability.
Specifically, let $\cP(\mathbf{x})$ denote the set of distributions $\alpha$ over subsets of $[n]$ satisfying
\[
    \PP_{A\sim\alpha}(i\in A)\le x_i
    \qquad\text{for every }i\in[n].
\]
The magician first chooses a distribution $\alpha\in\cP(\mathbf{x})$. The available set $A$ is then drawn from $\alpha$, and the magician selects its best feasible assortment from $A$. Its optimal expected revenue is
\begin{equation}\tag{\sf Magician}\label{eq:opt_m}
    \OPT_{\M}
    =
    \max_{\alpha\in\cP(\mathbf{x})}
    \EE_{A\sim\alpha}\bigl[R_k^+(A)\bigr].
\end{equation}

The customer's choice from the selected assortment still follows the same MNL model. By construction, the three optimal values satisfy
\[
    \OPT_{\C}
    \le
    \OPT_{\D}
    \le
    \OPT_{\M}.
\]
For each pair $(A,B)\in\{(\C,\D),(\C,\M),(\D,\M)\}$, we define the \emph{gap} between $A$ and $B$ as the supremum of $\OPT_B/\OPT_A$ over all problem instances. This gap measures the maximum multiplicative advantage of $B$ over $A$.

\subsection{Our contributions}\label{subsection:contributions}

Our main contribution is a simple, polynomial-time threshold policy that achieves a tight $1/2$ guarantee against the magician benchmark, even under adversarial truncation. The policy and its analysis provide a unified tool for a broad class of assortment optimization and choice-based matching problems under the MNL model. We use this tool to simplify the analysis of several existing models and develop algorithms that improve approximation guarantees, achieve guarantees against stronger benchmarks, and accommodate more general revenue structures and cardinality constraints.

\begin{theorem}\label{thm:mnl}
There is a polynomial-time algorithm that computes a committed assortment whose expected revenue is at least half of the magician's optimal expected revenue.
\end{theorem}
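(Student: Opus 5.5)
The plan is to prove the statement with a \emph{threshold policy} analysed against an LP relaxation of the magician. Concretely, I would (i) upper bound $\OPT_{\M}$ by a polynomial-size linear program in the purchase probabilities, (ii) set a revenue threshold $\tau$ equal to half the LP value, and (iii) commit to $S=\{i: r_i\ge \tau\}$ and show that $\EE[R_k^-(S\cap A)]\ge \tau$ under independent availability and adversarial truncation. Polynomial time then follows because the LP is polynomial and, given its value, $S$ is explicit. This avoids having to evaluate $\EE[R_k^-(\cdot)]$ for each candidate threshold.

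\textbf{Step 1 (LP upper bound).} Fix the magician's distribution $\alpha$ and its assortment rule $T(A)$. Define $p_i=\EE[\one\{i\in T\}\,v_i/(1+V(T))]$ and $p_0=\EE[1/(1+V(T))]$, where $V(T)=\sum_{j\in T}v_j$. These quantities satisfy the following constraints:
\begin{itemize}
\item $p_0+\sum_i p_i=1$;
\item $p_i\le v_i p_0$, which is pointwise domination;
\item $p_i\le x_i v_i/(1+v_i)$, because $i$ is offered with probability at most $x_i$ and $V(T)\ge v_i$ whenever $i\in T$;
\item $\sum_i p_i/v_i\le k\,p_0$, from $|T|\le k$.
\end{itemize}
Hence $\OPT_{\M}\le Z:=\max\sum_i r_ip_i$ over these constraints. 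This is the only place the magician's correlation power enters, and it disappears because only marginals matter.

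\textbf{Step 2 (decomposition at the threshold).} Set $\tau=Z/2$. For any displayed $T\subseteq S$,
\[
R(T)=\tau\,\frac{V(T)}{1+V(T)}+\sum_{i\in T}(r_i-\tau)\frac{v_i}{1+V(T)}.
\]
Both terms are nonnegative because $T\subseteq S$. The same split applied to the LP gives
\[
Z\le \tau(1-p_0)+\sum_{i\in S}(r_i-\tau)p_i,
\]
since products with $r_i<\tau$ contribute at most $\tau p_i$. So it suffices to show
\[
\EE\Bigl[\tau\tfrac{V(T)}{1+V(T)}+\sum_{i\in T}(r_i-\tau)\tfrac{v_i}{1+V(T)}\Bigr]\ \ge\ \tfrac12\Bigl(\tau(1-p_0)+\sum_{i\in S}(r_i-\tau)p_i\Bigr).
\]
Given this, the committed revenue is at least $Z/2\ge \OPT_{\M}/2$. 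In fact I would aim to show the stronger form, with right-hand side $Z-\tau=Z/2$, in the usual prophet-style way:
\begin{itemize}
\item when the committed display rarely has a purchase, the per-item terms must be large;
\item otherwise the $\tau\cdot\Pr(\text{purchase})$ term already covers $\tau$.
\end{itemize}

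\textbf{Step 3 (per-item charging, the main obstacle).} I need to lower bound, for each $i\in S$, the quantity $\EE[\one\{i\in T\}\,v_i/(1+V(T))]$ by roughly $p_i$ times (one minus the committed purchase probability). The difficulty is the adversarial truncation: whether $i$ survives into $T$, and the congestion $V(T)$, depend on the other available products. My first attempt would be the following.
\begin{itemize}
\item Condition on $i\in A$ and use independence, so that $A\setminus\{i\}$ is unaffected.
\item Use the fact that the adversary's truncation to $k$ items can only be bounded using the cardinality constraint $\sum p_i/v_i\le kp_0$.
\item To relate the committed no-purchase probability to $p_0$, use the exchange property of MNL. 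Removing an item never lowers another item's choice probability. Also, $R_k^-$ is at least the revenue of the $k$ lowest-revenue items, and every item in $S$ has $r_i\ge\tau$.
\end{itemize}
If the direct charging fails, I would fall back on the identity $\EE[\tau V/(1+V)]+\EE[\sum(r_i-\tau)v_i/(1+V)]\ge\tau$. I would then prove it by a case split on whether $\EE[V(T)/(1+V(T))]\ge 1/2$, using convexity of $u\mapsto 1/(1+u)$ (Jensen) together with $p_i\le x_iv_i/(1+v_i)$.
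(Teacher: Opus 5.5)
\textbf{Where the proposal fails.} Your overall architecture matches the paper's: an LP bound on the magician, a threshold at half its value, and prophet-style charging. Your LP is also a valid upper bound. But the policy itself fails once $k<\infty$, because thresholding the raw revenues $r_i$ does not survive adversarial truncation. Take $k=1$ and two always-available products with $r_1=r_2=1$, $v_1=1$, $v_2=\varepsilon\le 1$. In your LP the cardinality constraint reads $p_1+p_2/\varepsilon\le p_0$, which forces $p_0\ge 1/2$. So $Z=1/2$, $\tau=1/4$ and $S=\{1,2\}$. The adversary then always keeps product $2$, so you earn $\varepsilon/(1+\varepsilon)$ while $\OPT_{\M}\ge 1/2$. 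In particular, the per-item target of your Step 3 is false here for $i=1$: its committed choice probability is $0$, whereas $p_1=1/2$ and the committed no-purchase probability is $1/(1+\varepsilon)$. Your remark that $R_k^-$ is at least the revenue of the $k$ lowest-revenue items is also backwards. $R_k^-$ is a minimum over size-$k$ subsets, hence at most that revenue, and the adversary targets products with small $r_iv_i$, not small $r_i$.

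\textbf{The missing idea.} You need to threshold the distorted revenues $\widehat r_i=\frac{kv_i}{1+kv_i}r_i$ and analyze the distorted choice model. In that model the adversary's displayed products get weights $v_i+1/k$ and the exit option gets $(1-|B|/k)^+$, so that $\frac{\widehat v_i}{1+\widehat v_i}=\frac{v_i+1/k}{1+v_i}$. Since $r_iv_i=\widehat r_i(v_i+1/k)$, this gives $R_k^-(B)=\sum_i\widehat{\phi}_{\mathbf{v}}(i,B)\widehat r_i$. Moreover, any displayed set of exactly $k$ products with $\widehat r_i\ge\tau$ earns at least $\tau$, and truncation occurs only where the distorted exit probability vanishes. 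Committing to $H=\{i:\widehat r_i\ge\tau\}$, the inequality you leave open becomes $q_i\ge x_i\frac{\widehat v_i}{1+\widehat v_i}q_0$ for $i\in H$. Here $q_i$ and $q_0$ are the expected distorted choice and exit probabilities under the committed policy. The inequality follows in three moves:
\begin{itemize}
\item restrict to the event $\{i\in B,\ |B\setminus\{i\}|<k\}$, on which nothing is truncated;
\item factor out $\one\{i\in B\}$ by independence;
\item use the pointwise bound $\widehat{\phi}_{\mathbf{v}}(0,B)\le(1+v_i)\one\{|B\setminus\{i\}|<k\}/(1+v_i+V(B\setminus\{i\}))$.
\end{itemize}
Summing then gives $\EE[R_k^-(B)]\ge\tau(1-q_0)+q_0\sum_ix_i\frac{\widehat v_i}{1+\widehat v_i}(\widehat r_i-\tau)^+$.

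Even for $k=\infty$, where $\widehat r_i=r_i$ and your policy has this form, the Jensen case split you propose does not supply this step. The independence factorization is what is needed.

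\textbf{What survives.} Your LP and your choice $\tau=Z/2$ do survive the repair. The map $p_i\mapsto p_i(1+1/(kv_i))$ sends feasible points of your LP to feasible points of $\LP(\mathbf{x},\widehat{\mathbf{v}},\widehat{\mathbf{r}})$ with the same objective value. Hence $Z\le\tau+\sum_ix_i\frac{\widehat v_i}{1+\widehat v_i}(\widehat r_i-\tau)^+$ for every $\tau\ge0$, which closes the bound at $\tau=Z/2$. The paper instead defines $\tau$ by $\tau=\sum_ix_i\frac{\widehat v_i}{1+\widehat v_i}(\widehat r_i-\tau)^+$.
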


Theorem~\ref{thm:mnl} bounds the combined advantage of
customization and control over dependence by a factor of
two, even with cardinality constraints and adversarial
truncation. It therefore bounds all three gaps by two.
Appendix~\ref{appendix:benchmark_gaps} shows that these bounds are tight;
in particular, the $1/2$ guarantee is worst-case optimal
over the entire class of committed or delayed policies.

\paragraph{Threshold policy.}
To prove Theorem \ref{thm:mnl}, our algorithm utilizes a simple threshold structure. Given the cardinality constraint $k$, we compute, for each product $i$, a {\em distorted revenue} that depends on $v_i$ and $r_i$. The algorithm then commits to every product whose distorted revenue is at least a common threshold. 
In Section~\ref{section:main_result}, we present the algorithm and prove Theorem~\ref{thm:mnl}. In Appendix~\ref{appendix:benchmark_gaps}, we establish matching lower bounds for all three pairwise gaps.

\paragraph{\underline{Applications:}}
We apply our framework to a broad range of assortment optimization and matching problems from the literature, spanning static, multi-round, and online settings. Each application combines a problem-specific linear programming relaxation with the same threshold analysis, providing a common approach to algorithm design across these models. The resulting improvements include better approximation factors, guarantees against stronger benchmarks, extensions to more general revenue structures, and guarantees under cardinality constraints in settings where previous guarantees applied only to unconstrained assortments. Table~\ref{tab:application_improvements} summarizes the resulting guarantees and their comparison with prior work.

\begin{table}[!b]
    \centering
    \begin{threeparttable}
        \small
        \setlength{\tabcolsep}{4pt}
        \renewcommand{\arraystretch}{1.3}

        \begin{tabular}{
            @{}
            >{\raggedright\arraybackslash}m{0.35\textwidth}
            >{\centering\arraybackslash}m{0.24\textwidth}
            >{\centering\arraybackslash}m{0.23\textwidth}
            >{\centering\arraybackslash}m{0.13\textwidth}
            @{}
        }
            \toprule
            \textbf{Application}
            &
            \textbf{Unconstrained guarantee}
            &
            \textbf{Constrained guarantee}
            &
            \textbf{Our guarantee}
            \\
            \midrule

            Customized standard TSAO\tnote{1}
            &
            $\tfrac12-\varepsilon$\tnote{1}
            &
            ?
            &
            $\tfrac12$
            \\

            \addlinespace

            Inclusive standard TSAO\tnote{2}
            &
            $\tfrac12$\tnote{2}
            &
            ?
            &
            $\tfrac12$
            \\

            \addlinespace

            Fully static TSAO\tnote{3}
            &
            $0.067$\tnote{3}
            &
            $0.067$\tnote{3}
            &
            $\tfrac14$
            \\

            \addlinespace

            MMNL with customization\tnote{4}
            &
            $\tfrac12-\varepsilon$\tnote{5}
            &
            $\Omega(1/\sqrt{m\log m})$\tnote{4}
            &
            $\tfrac12$
            \\

            \addlinespace

            Ride-hailing notifications\tnote{6}
            &
            $\tfrac{1-\delta}{4}$\tnote{6}
            &
            $\tfrac{1-\delta}{4}$\tnote{6}
            &
            $\tfrac12$
            \\

            \addlinespace

            Multi-round broadcast matching\tnote{7}
            &
            $\tfrac{1-e^{-3}}{3}$\tnote{7}
            &
            $\tfrac{1-e^{-3}}{3}$\tnote{7}
            &
            $\tfrac12$
            \\

            \addlinespace

            MNL prophet inequality\tnote{8}
            &
            $\tfrac12$\tnote{8}
            &
            $\tfrac12$\tnote{8}
            &
            $\tfrac12$
            \\

            \bottomrule
        \end{tabular}

        \caption{Approximation guarantees across applications.
        TSAO stands for two-sided assortment optimization, and
        MMNL stands for mixture of MNL models.
        The application descriptions below specify the models,
        constraints, benchmarks, and any additive error terms
        underlying these guarantees.
        A question mark indicates that the cited work does not
        provide a guarantee for the corresponding constrained variant.}
        \label{tab:application_improvements}

        \begin{tablenotes}[para,flushleft]
            \footnotesize
            \item[1] \citep{ahmadnejadsaein2025adaptive}.
            \item[2] \citep{nissim2024revenue}.
            \item[3] \citep{housni2024two}.
            \item[4] \citep{el2023joint}.
            \item[5] \citep{udwani2023submodular}.
            \item[6] \citep{ekbatani2026non}.
            \item[7] \citep{mohammadimulti}.
            \item[8] \citep{goyal2023mnl}.
        \end{tablenotes}

    \end{threeparttable}
\end{table}

\paragraph{Two-sided assortment optimization.}
A platform offers customers and suppliers assortments of potential partners, and a match forms when two agents select each other.

\underline{\emph{Standard TSAO.}}
We use this term for models in which customers choose first and suppliers subsequently choose from their backlogs of selecting customers~\citep{ashlagi_etal19,torrico21}. Customer assortments may be fixed in advance (\emph{static}) or adapted to earlier customer choices (\emph{adaptive})~\citep{housni2024two}. An \emph{inclusive} policy displays each supplier's entire backlog, while a \emph{customized} policy may select a subset.
For unconstrained assortments under MNL choice with pair-dependent
revenues, \citet{nissim2024revenue} initially obtained a
$(10/539-\varepsilon)$-approximation guarantee for the 
static inclusive problem. In a recent revision of the same paper, concurrent with our work, they improved this guarantee to $1/2$. 
\citet{ahmadnejadsaein2025adaptive} obtain a static $(1/2-\varepsilon)$ approximation against the optimal adaptive customized policy. We obtain a polynomial-time $1/2$ guarantee against this benchmark while fixing customer assortments in advance, even under adversarial truncation of supplier backlogs (Theorem~\ref{thm:standard_tsao}). Our algorithm accommodates cardinality constraints on both sides and arbitrary pair-dependent revenues, providing the first constant-factor guarantee for this constrained setting through a simpler algorithm and analysis, while also improving the state-of-the-art approximation guarantees for the unconstrained setting.

\underline{\emph{Fully static TSAO.}}
Here, assortments on both sides are fixed before any choices are observed~\citep{ahmed2022parameterized,housni2024two}. Under MNL choices, \citet{housni2024two} obtain a $0.067$ approximation for uniform match revenues with cardinality constraints. We improve the guarantee to $1/4$ and allow arbitrary pair-dependent revenues while retaining cardinality constraints on both sides (Theorem~\ref{thm:fully_static}). Our algorithm also avoids the case distinctions based on preference weights used in the previous approach.

\paragraph{MMNL assortment optimization with customization.}
In the model of \citet{el2023joint}, a firm selects an initial assortment subject to a cardinality constraint before observing the arriving customer's class. Each of the $m$ classes has its own MNL choice model, and the firm may display a class-dependent subset of the initial assortment. For this problem, \citet{el2023joint} obtained an $\Omega(1/\log m)$ approximation, and \citet{udwani2023submodular} improved the guarantee to $(1/2-\varepsilon)$.
We obtain a $1/2$ approximation through a simple randomized polynomial-time algorithm. Our guarantee also extends to a model in which revenues depend on both the product and the customer class, and separate cardinality constraints apply to the initial assortment and the displayed assortments (Theorem~\ref{thm:mmnl}).

\paragraph{Ride-hailing notifications.}
In the non-exclusive ride-hailing notification model of \citet{ekbatani2026non}, a platform may notify multiple drivers about the same rider, while notifying each driver about at most one rider. Drivers accept their notifications independently. When multiple drivers accept the same rider, \emph{First Acceptance} assigns one of them uniformly at random, whereas \emph{Best Acceptance} selects the accepting driver with the highest match value. The authors obtain expected welfare at least
$(1-\delta)/4$ times the optimal value under First
Acceptance, up to an additive loss of
$\mathcal{O}(\varepsilon)$.  
We reduce this problem to standard TSAO using the \emph{ranked MNL} choice model. The reduction yields a simple polynomial-time algorithm that achieves at least half of the optimal value under Best Acceptance, even when evaluated under \emph{Worst Acceptance}, where an adversary selects the accepting driver with the lowest match value for each rider (Theorem~\ref{thm:notifications}).

\paragraph{Multi-round broadcast matching.}
In the model of \citet{mohammadimulti}, a delivery platform broadcasts orders to drivers over several rounds.
Each driver receives at most one offer per round, and each order--driver pair may be offered at most once.
Drivers accept independently, an order with multiple acceptances is assigned uniformly among its accepting drivers, and matched orders and drivers leave the system.
For the general multiple-order setting, \citet{mohammadimulti} obtain a $(1-e^{-3})/3$ approximation against the optimal adaptive policy. We improve the approximation factor to $1/2$ against the same adaptive benchmark (Theorem~\ref{thm:broadcast_matching}). Our policy achieves this guarantee with a static offer plan fixed before the first round. For this application, the constrained and unconstrained entries in Table~\ref{tab:application_improvements} refer to the same setting.

\paragraph{MNL prophet inequality.}
In the MNL prophet problem of \citet{goyal2023mnl}, products
have independent random revenue--weight pairs, revealed
upon arrival.
The firm must irrevocably accept or reject each product,
subject to a cardinality constraint on the final assortment.
The authors obtain a tight $1/2$ guarantee against a prophet
that observes all realizations before selecting its assortment.
We retain this factor against a stronger magician that may
also correlate the pairs across products while preserving
each product's distribution
(Theorem~\ref{thm:prophet_mnl}).

\subsection{Related work}\label{subsection:related_work}

\paragraph{Offline assortment optimization and the MNL choice model.}
In classical assortment optimization, the firm selects an assortment with known product parameters, and randomness arises from the customer's subsequent choice.
For the MNL choice model, \citet{talluri04} establish the optimality of revenue-ordered assortments in the unconstrained setting.
Subsequent work studies assortment optimization under various feasibility constraints \citep{rusmevichientong2010dynamic,davis2013assortment,desir2022capacitated}, as well as extensions involving random preferences \citep{rusmevichientong2014assortment}, assortment customization \citep{el2023joint,udwani2023submodular}, and product placement \citep{el2026location}.
Our framework provides a common way to analyze assortment decisions when the alternatives available for display are themselves random, with particular emphasis on the loss from committing before this uncertainty is resolved.

\paragraph{Random availability and assortment customization.}
Our committed firm's problem without cardinality constraints coincides with assortment optimization under the click-based MNL model of \citet{aouad2025click}.
In that model, each offered product independently enters the customer's consideration set, and the customer then chooses from this set according to MNL.
The authors establish NP-hardness and develop a polynomial-time approximation scheme.
Their guarantee concerns approximation of the optimal committed revenue.
Our focus is on comparing a simple committed policy with a stronger benchmark that can customize the assortment after observing availability and control the dependence among availability events.

\citet{ahmadnejadsaein2025adaptive} quantify the benefit of
controlling dependence through a tight factor-two
correlation-gap bound for the optimal MNL revenue function.
Their result corresponds to our delayed--magician comparison
in the unconstrained setting.

A closely related analytical approach appears in \citet{gallego2024assortment}, who develop linear programming bounds and independent-demand-based heuristics for assortment optimization under downward-closed constraints.
Their framework includes a random-attraction model that encompasses click-based MNL.
The key distinction concerns when feasibility is imposed.
Both their firm and their personalization benchmark select a feasible initial assortment before uncertainty is resolved; the benchmark may subsequently customize within that assortment.
In our setting, the cardinality constraint applies to the realized displayed assortment.
The committed firm may select a larger initial set, but any excess available products are removed adversarially, while the magician chooses its feasible assortment without an advance commitment to an initial set.
Through the distorted MNL choice model, we extend their unconstrained LP-based approach to this treatment of cardinality constraints while retaining the tight $1/2$ guarantee.
This robustness to how the realized assortment is truncated is central to our applications.

\paragraph{Prophet inequalities and online assortment optimization.}
Prophet inequalities provide a related perspective on the value of decisions made with incomplete information.
The classical results of \citet{krengel1978semiamarts} and \citet{samuel1984comparison} establish a tight $1/2$ guarantee, achievable by a static threshold policy.
This literature now encompasses many feasibility constraints, objectives, and arrival models; see \citet{correa2019recent} for a survey.

The closest online counterpart to our framework is the MNL prophet inequality of \citet{goyal2023mnl}.
Products arrive with independently drawn revenue--weight pairs, and the firm must accept or reject each product irrevocably after observing its parameters.
The authors obtain a tight $1/2$ guarantee against the optimal assortment chosen with full knowledge of all realizations, under a cardinality constraint and even against an almighty adversary.
Our policy uses the same distorted revenues as theirs.
The differences lie in the threshold used and the benchmark considered.
We compute our threshold directly from the input distributions, without evaluating the prophet's expected optimum, and retain the $1/2$ guarantee against a magician that may additionally choose the dependence across products while preserving their marginal parameter distributions.
Related work by \citet{wang2026prophet} develops prophet inequalities with stochastic choices, allowing broader choice rules and studying different constraints and forms of arrival information.

\vspace{2mm}
\underline{Organization:}
In Section~\ref{section:main_result}, we present our main algorithmic framework and prove Theorem~\ref{thm:mnl}.
In Sections~\ref{section:tsao}--\ref{section:mnl_prophet}, we apply this framework to the problems described above.
\section{Main result}\label{section:main_result}

In this section, we establish Theorem~\ref{thm:mnl} through a simple threshold policy based on distorted revenues.
In Section~\ref{subsection:distorted_mnl_choice_model}, we introduce the \emph{distorted MNL choice model}, an analytical tool for handling adversarial truncation.
In Section~\ref{subsection:lp_relaxation}, we derive an LP upper bound on the optimal value of the \ref{eq:opt_m} problem.
In Section~\ref{subsection:threshold_policy}, we present our threshold policy and prove Theorem~\ref{thm:mnl}.
Finally, in Section~\ref{subsection:ranked_mnl}, we extend our results to the \emph{ranked MNL choice model}, a generalization of MNL that we introduce to simplify the analysis of some of our applications.

\paragraph{Notation.}
We write $[n]=\{1,\ldots,n\}$ for the universe of products.
Throughout this section, we fix an availability probability vector
$\mathbf{x}\in[0,1]^n$, a preference weight vector
$\mathbf{v}\in\RR_+^n$, a revenue vector $\mathbf{r}\in\RR_+^n$,
and a cardinality limit $k\in\{1,2,\ldots\}\cup\{\infty\}$,
where $k=\infty$ denotes the unconstrained case.
For any $A\subseteq[n]$, we write $V(A)=\sum_{i\in A}v_i$.
For $a\in\RR$, we let $a^+=\max\{a,0\}$.
For an event or condition $E$, we write $\one\{E\}$ for its indicator.

\subsection{Distorted MNL choice model}
\label{subsection:distorted_mnl_choice_model}

We introduce an auxiliary choice model as an analytical tool
for handling adversarial truncation.
For each $S\subseteq[n]$, fix any minimizer $T(S)$ in the definition
of $R_k^-(S)$, so that $T(S)\subseteq S$,
$|T(S)|=\min\{|S|,k\}$, and $R(T(S))=R_k^-(S)$.
Thus, $T(S)=S$ when $|S|\le k$; otherwise, $T(S)$ is a
revenue-minimizing subset of $S$ of size $k$.

Using the convention $1/\infty=0$, we define the distorted
choice probability of each product $i\in[n]$ by
\[
    \widehat{\phi}_{\mathbf{v}}(i,S)
    =
    \one\{i\in T(S)\}
    \frac{v_i+1/k}{1+V(T(S))}.
\]
The corresponding exit probability is
\[
    \widehat{\phi}_{\mathbf{v}}(0,S)
    =
    \frac{(1-|S|/k)^+}{1+V(T(S))}.
\]
These probabilities are nonnegative and sum to one for every
assortment $S$, so $\widehat{\phi}_{\mathbf{v}}$ defines a valid
choice model, which we call the \emph{distorted MNL choice model}.
In general, this model does not have the MNL form.
When $k=\infty$, however, $T(S)=S$ and
$\widehat{\phi}_{\mathbf{v}}=\phi_{\mathbf{v}}$, so it coincides
with the original MNL choice model.

For each product $i\in[n]$, define its \emph{distorted weight}
and \emph{distorted revenue} by
\[
    \widehat v_i
    =
    \begin{cases}
        \displaystyle\frac{1+kv_i}{k-1},
        & 1<k<\infty,\\[2mm]
        +\infty,
        & k=1,\\
        v_i,
        & k=\infty,
    \end{cases}
    \qquad
    \widehat r_i
    =
    \begin{cases}
        \displaystyle\frac{kv_i}{1+kv_i}r_i,
        & k<\infty,\\[2mm]
        r_i,
        & k=\infty.
    \end{cases}
\]
We interpret $\widehat v_i/(1+\widehat v_i)$ as one when
$\widehat v_i=+\infty$.

These definitions yield two properties used in our analysis.
First, the distorted choice probabilities and revenues recover
the expected revenue under adversarial truncation:
\begin{equation}\label{eq:distorted_revenue}
    R_k^-(S)
    =
    \sum_{i\in S}
    \widehat{\phi}_{\mathbf{v}}(i,S)\widehat r_i
    \qquad\text{for every }S\subseteq[n].
\end{equation}
Indeed, for $k<\infty$,
\[
    \sum_{i\in S}
    \widehat{\phi}_{\mathbf{v}}(i,S)\widehat r_i
    =
    \sum_{i\in T(S)}
    \frac{v_i+1/k}{1+V(T(S))}
    \frac{kv_i}{1+kv_i}r_i
    =
    R(T(S))
    =
    R_k^-(S).
\]
For $k=\infty$, the identity follows because the choice
probabilities and revenues are unchanged.

Second, the distorted weights provide a uniform upper bound
on each product's choice probability:
\begin{equation}\label{eq:distorted_probability_bound}
    \max_{S\subseteq[n]:\,i\in S}
    \widehat{\phi}_{\mathbf{v}}(i,S)
    \le
    \frac{\widehat v_i}{1+\widehat v_i}
    \qquad\text{for every }i\in[n].
\end{equation}
To see this, suppose $1<k<\infty$.
If $i\notin T(S)$, its distorted choice probability is zero.
Otherwise, $V(T(S))\ge v_i$, so
\[
    \widehat{\phi}_{\mathbf{v}}(i,S)
    \le
    \frac{v_i+1/k}{1+v_i}
    =
    \frac{\widehat v_i}{1+\widehat v_i}.
\]
For $k=1$, the bound holds because the right-hand side equals one.
For $k=\infty$, it reduces to the standard MNL bound
$\phi_{\mathbf{v}}(i,S)\le v_i/(1+v_i)$.

Our threshold policy compares the distorted revenues
$\widehat r_i$ with a common threshold.
The same revenue distortion is used by \citet{goyal2023mnl}
in their analysis of MNL prophet inequalities under
cardinality constraints.

\subsection{LP relaxation}\label{subsection:lp_relaxation}

We construct an LP relaxation of the \ref{eq:opt_m} problem
using the distorted weights and revenues introduced in
Section~\ref{subsection:distorted_mnl_choice_model}.
To define this relaxation, consider the following primal--dual
pair for parameters $(\mathbf{x},\mathbf{v},\mathbf{r})$:
\begin{center}
    \begin{minipage}[t]{0.48\textwidth}
        \begin{align}
            \LP(\mathbf{x},\mathbf{v},\mathbf{r})=\max\quad
            &\sum_{i\in[n]}p_ir_i
            \label{eq:main_primal}\\
            \suchthat\quad
            &p_i\le x_i\frac{v_i}{1+v_i}
            &&\forall i\in[n],\notag\\
            &\sum_{i\in[n]}p_i\le1, \notag\\
            &\mathbf{p}\in\RR_+^n.\notag
        \end{align}
    \end{minipage}
    \hfill
    \begin{minipage}[t]{0.48\textwidth}
        \begin{align}
            \min\quad
            &\theta+
            \sum_{i\in[n]}
            x_i\frac{v_i}{1+v_i}\lambda_i
            \label{eq:main_dual}\\
            \suchthat\quad
            &\theta+\lambda_i\ge r_i
            &&\forall i\in[n],\notag\\
            &\theta\in\RR_+,
            \quad
            \boldsymbol{\lambda}\in\RR_+^n.\notag
        \end{align}
    \end{minipage}
\end{center}
We also consider the one-dimensional convex program
\begin{equation}\label{eq:main_convex}
    \min_{\theta\ge0}
    \left\{
        \theta+
        \sum_{i\in[n]}
        x_i\frac{v_i}{1+v_i}(r_i-\theta)^+
    \right\}.
\end{equation}

In the primal LP~\eqref{eq:main_primal}, the variable $p_i$
represents the unconditional probability that product $i$ is
chosen by the customer.
The individual constraint combines the upper bound $x_i$ on
its availability probability with its maximum MNL choice
probability $v_i/(1+v_i)$, attained when it is offered alone.
The constraint $\sum_{i\in[n]}p_i\le1$ reflects the fact that
the customer chooses at most one product.

\begin{remark}\label{remark:main_programs}
Programs~\eqref{eq:main_primal}, \eqref{eq:main_dual},
and~\eqref{eq:main_convex} have the same optimal value,
denoted by $\LP(\mathbf{x},\mathbf{v},\mathbf{r})$.
Moreover, Program~\eqref{eq:main_convex} admits an optimal
solution in $\{0,r_1,\ldots,r_n\}$.
\end{remark}

Indeed, the primal LP is feasible and bounded, so strong LP
duality gives equal optimal values for
Programs~\eqref{eq:main_primal} and~\eqref{eq:main_dual}.
For any fixed $\theta\ge0$, the smallest feasible value of
each dual variable $\lambda_i$ is $(r_i-\theta)^+$.
Since its objective coefficient is nonnegative, choosing
this value is optimal.
Substituting these values into Program~\eqref{eq:main_dual}
yields Program~\eqref{eq:main_convex}.
Its objective is continuous and piecewise linear, with
possible breakpoints at the revenues $r_i$, and equals
$\theta$ for $\theta\ge\max_{i\in[n]}r_i$.
It therefore attains its minimum at $0$ or at one of the
revenues.

The following lemma shows that evaluating this LP at the
distorted parameters gives the desired upper bound.

\begin{lemma}\label{lem:magician_upper_bound}
The magician's optimal expected revenue satisfies
\[
    \OPT_{\M}
    \le
    \LP(\mathbf{x},\widehat{\mathbf{v}},
        \widehat{\mathbf{r}}).
\]
\end{lemma}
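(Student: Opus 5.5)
The plan is to take an optimal magician strategy, rewrite its revenue in the distorted MNL choice model, and check that the resulting expected distorted choice probabilities are feasible for the primal LP~\eqref{eq:main_primal} at the parameters $(\mathbf{x},\widehat{\mathbf{v}},\widehat{\mathbf{r}})$.

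First, fix an optimal distribution $\alpha\in\cP(\mathbf{x})$ for the magician. For each realization $A$ in its support, fix an optimal assortment $T_A\subseteq A$ with $|T_A|\le k$ and $R(T_A)=R_k^+(A)$. Since $|T_A|\le\min\{k,|T_A|\}$, adversarial truncation removes nothing from $T_A$. Hence $T(T_A)=T_A$ and $R_k^-(T_A)=R(T_A)$.

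Applying identity~\eqref{eq:distorted_revenue} to $S=T_A$ gives
\[
    R_k^+(A)=\sum_{i\in[n]}\widehat{\phi}_{\mathbf{v}}(i,T_A)\,\widehat r_i .
\]
I would then define
\[
    p_i=\EE_{A\sim\alpha}\bigl[\widehat{\phi}_{\mathbf{v}}(i,T_A)\bigr]
    \qquad\text{for every }i\in[n].
\]
Taking expectations yields $\OPT_{\M}=\sum_{i\in[n]}p_i\widehat r_i$.

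Next, I would verify the primal constraints. Nonnegativity of $\mathbf{p}$ is immediate. The constraint $\sum_i p_i\le1$ holds because, for every $A$, the distorted probabilities of the products sum to $1-\widehat{\phi}_{\mathbf{v}}(0,T_A)\le1$, since $\widehat{\phi}_{\mathbf{v}}$ is a valid choice model. For the individual constraint, the distorted choice probability $\widehat{\phi}_{\mathbf{v}}(i,T_A)$ vanishes unless $i\in T_A\subseteq A$. When $i\in T_A$, bound~\eqref{eq:distorted_probability_bound} gives $\widehat{\phi}_{\mathbf{v}}(i,T_A)\le\widehat v_i/(1+\widehat v_i)$. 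Therefore
\[
    p_i\le\PP_{A\sim\alpha}(i\in A)\,\frac{\widehat v_i}{1+\widehat v_i}\le x_i\frac{\widehat v_i}{1+\widehat v_i},
\]
where the last step uses $\alpha\in\cP(\mathbf{x})$.

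Thus $\mathbf{p}$ is feasible for $\LP(\mathbf{x},\widehat{\mathbf{v}},\widehat{\mathbf{r}})$, and $\OPT_{\M}=\sum_i p_i\widehat r_i\le\LP(\mathbf{x},\widehat{\mathbf{v}},\widehat{\mathbf{r}})$.

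The only delicate points are bookkeeping at the edge cases. For $k=1$, the convention $\widehat v_i/(1+\widehat v_i)=1$ turns the individual constraint into $p_i\le x_i$, which the argument above gives directly. For $k=\infty$, everything reduces to the standard MNL quantities. The key conceptual step is noticing that the magician's assortment never exceeds $k$, so the identity~\eqref{eq:distorted_revenue}, stated for truncated revenue, applies verbatim to the magician's untruncated choice.
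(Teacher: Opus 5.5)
Your proof is correct and follows the paper's argument. The paper first rewrites $\OPT_{\M}$ as $\max_{\alpha\in\cP(\mathbf{x})}\EE_{S\sim\alpha}[R_k^-(S)]$ by passing to the distribution of optimal subsets $U\subseteq S$ with $|U|\le k$, and only then checks LP feasibility of the averaged distorted choice probabilities using \eqref{eq:distorted_revenue} and \eqref{eq:distorted_probability_bound}; you fold these two steps together by working directly with $T_A$, and the feasibility checks are the same.
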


\begin{proof}
We first establish the equivalent formulation
\begin{equation}\label{eq:magician_pruned}
    \OPT_{\M}
    =
    \max_{\alpha\in\cP(\mathbf{x})}
    \EE_{S\sim\alpha}[R_k^-(S)].
\end{equation}
Since $R_k^-(S)\le R_k^+(S)$ for every $S\subseteq[n]$,
the right-hand side is at most $\OPT_{\M}$ by the definition
of the \ref{eq:opt_m} problem.
For the reverse inequality, take an optimal distribution
$\alpha\in\cP(\mathbf{x})$ for that problem.
For each realized set $S$, choose a subset $U\subseteq S$
with $|U|\le k$ and $R(U)=R_k^+(S)$, and let $\beta$ denote
the distribution of $U$.
Because $U\subseteq S$, this selection cannot increase any
product's marginal availability probability, so
$\beta\in\cP(\mathbf{x})$.
Moreover, $|U|\le k$ implies $R_k^-(U)=R(U)$, and hence
\[
    \EE_{U\sim\beta}[R_k^-(U)]
    =
    \EE_{S\sim\alpha}[R_k^+(S)]
    =
    \OPT_{\M}.
\]
This proves~\eqref{eq:magician_pruned}.

Now take an optimal distribution
$\alpha\in\cP(\mathbf{x})$ in~\eqref{eq:magician_pruned}.
For each product $i\in[n]$, define its unconditional
choice probability under the distorted model by
\[
    p_i
    =
    \sum_{S\subseteq[n]}
    \alpha_S\widehat{\phi}_{\mathbf{v}}(i,S),
\]
where $\alpha_S$ is the probability assigned to $S$.
By~\eqref{eq:distorted_probability_bound} and
$\alpha\in\cP(\mathbf{x})$,
\[
    p_i
    \le
    \sum_{S\ni i}\alpha_S
    \frac{\widehat v_i}{1+\widehat v_i}
    \le
    x_i\frac{\widehat v_i}{1+\widehat v_i}.
\]
Furthermore, because the distorted choice probabilities,
including the exit probability, sum to one,
\[
    \sum_{i\in[n]}p_i
    =
    \sum_{S\subseteq[n]}\alpha_S
    \sum_{i\in[n]}\widehat{\phi}_{\mathbf{v}}(i,S)
    \le
    \sum_{S\subseteq[n]}\alpha_S
    =
    1.
\]
Thus, $\mathbf{p}$ is feasible for LP~\eqref{eq:main_primal}
with parameters
$(\mathbf{x},\widehat{\mathbf{v}},\widehat{\mathbf{r}})$.
By~\eqref{eq:distorted_revenue}, its objective value is
\begin{align*}
    \sum_{i\in[n]}p_i\widehat r_i
    &=
    \sum_{S\subseteq[n]}\alpha_S
    \sum_{i\in[n]}
    \widehat{\phi}_{\mathbf{v}}(i,S)\widehat r_i\\
    &=
    \sum_{S\subseteq[n]}\alpha_S R_k^-(S)
    =
    \OPT_{\M}.
\end{align*}
The optimal LP value is therefore at least $\OPT_{\M}$.
\end{proof}

\subsection{Threshold policy}\label{subsection:threshold_policy}

We now introduce the threshold policy that proves
Theorem~\ref{thm:mnl}.
Let $\tau\ge0$ satisfy
\begin{equation}\label{eq:tau}
    \tau
    =
    \sum_{i\in[n]}
    x_i\frac{\widehat v_i}{1+\widehat v_i}
    (\widehat r_i-\tau)^+,
\end{equation}
where $\widehat{\mathbf{v}}$ and $\widehat{\mathbf{r}}$
are the distorted weights and revenues.
The right-hand side is nonnegative, continuous, and
decreasing in $\tau$, and vanishes for
$\tau\ge\max_{i\in[n]}\widehat r_i$.
Since the left-hand side is strictly increasing,
Equation~\eqref{eq:tau} has a nonnegative solution.

Define the threshold set
\[
    H=\{i\in[n]:\widehat r_i\ge\tau\}.
\]
Our policy commits to $H$ before product availabilities
are realized.
This is a feasible solution to the \ref{eq:opt_c} problem:
the commitment set is unrestricted, and the realized set
of eligible available products $A\cap H$ is subject to
adversarial truncation.
We will show that this policy achieves at least half of
the magician's optimal expected revenue, thereby proving
Theorem~\ref{thm:mnl}.

To establish this guarantee, we prove a stronger statement
that also applies to negatively associated availability
distributions.
This class includes independent product availabilities
and allows us to use the same guarantee in subsequent
applications.
A distribution $\alpha$ on subsets of $[n]$ is
\emph{negatively associated} if, for any two increasing
functions $f,g:2^{[n]}\to\RR$ that depend on disjoint
coordinate sets,
\[
    \EE_{A\sim\alpha}[f(A)g(A)]
    \le
    \EE_{A\sim\alpha}[f(A)]\,
    \EE_{A\sim\alpha}[g(A)].
\]
See~\citet{pemantle2000towards} for background on negative
association and related notions of negative dependence.

The following lemma bounds the revenue of our threshold
policy under any negatively associated distribution whose
marginal availability probabilities are at least $x_i$
for products in $H$.

\begin{lemma}\label{lem:threshold_lower_bound}
Let $\alpha$ be a negatively associated distribution
with marginal probabilities $\mathbf{z}\in[0,1]^n$
satisfying $z_i\ge x_i$ for every $i\in H$.
Then
\[
    \frac12
    \LP(\mathbf{x},\widehat{\mathbf{v}},
        \widehat{\mathbf{r}})
    \le
    \tau
    \le
    \EE_{S\sim\alpha}\bigl[R_k^-(S\cap H)\bigr].
\]
\end{lemma}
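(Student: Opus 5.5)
The first inequality should follow directly from Remark~\ref{remark:main_programs}, and the second from a pointwise decomposition of the distorted revenue combined with negative association. Throughout, write $q_i=\widehat v_i/(1+\widehat v_i)$, so that \eqref{eq:tau} reads $\tau=\sum_i x_iq_i(\widehat r_i-\tau)^+$. For $B\subseteq[n]$, write $e(B)=\widehat\phi_{\mathbf v}(0,B)$ for the distorted exit probability.

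\textbf{First inequality.} By Remark~\ref{remark:main_programs}, $\LP(\mathbf x,\widehat{\mathbf v},\widehat{\mathbf r})$ equals the optimum of the convex program~\eqref{eq:main_convex} at the distorted parameters. It is therefore at most that program's objective evaluated at $\theta=\tau$. By~\eqref{eq:tau}, this objective value is $\tau+\tau=2\tau$, which gives $\tfrac12\LP\le\tau$.

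\textbf{Second inequality: pointwise decomposition.} Fix a realization $S$ and set $B=S\cap H$. Since every $i\in B$ satisfies $\widehat r_i\ge\tau$, identity~\eqref{eq:distorted_revenue} together with the fact that the distorted probabilities sum to one gives
\[
R_k^-(B)=\tau\bigl(1-e(B)\bigr)+\sum_{i\in B}\widehat\phi_{\mathbf v}(i,B)(\widehat r_i-\tau)^+ .
\]
The key pointwise claim is that for every $i\in B$,
\[
\widehat\phi_{\mathbf v}(i,B)\ge q_i\,e(B\setminus\{i\}).
\]
To check it, distinguish two cases.
\begin{itemize}
\item If $|B|>k$, then $|B\setminus\{i\}|\ge k$, so $e(B\setminus\{i\})=0$ and there is nothing to prove.
\item If $|B|\le k$, then $T(B)=B$ and $T(B\setminus\{i\})=B\setminus\{i\}$. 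For $1<k<\infty$ one has $q_i=(v_i+1/k)/(1+v_i)$, so the claim reduces to
\[
(1+v_i)\bigl(1+V(B)-v_i\bigr)\ge\bigl(1-(|B|-1)/k\bigr)\bigl(1+V(B)\bigr).
\]
This holds because the left-hand side is at least $1+V(B)$.
\end{itemize}
The edge cases $k=1$ (where $q_i=1$) and $k=\infty$ (the standard MNL bound) should be checked in the same way. I expect this pointwise inequality, in particular getting the truncation cases right, to be the main obstacle; the remaining steps are routine.

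\textbf{Second inequality: taking expectations.} I will use that $e(\cdot)$ is decreasing in the set. For $|S|<k$ it equals $(1-|S|/k)/(1+V(S))$, which is decreasing in $S$, and it vanishes once $|S|\ge k$. Consider the functions $f(S)=\one\{i\in S\}$, which depends only on coordinate $i$, and $-e\bigl((S\cap H)\setminus\{i\}\bigr)$, which depends only on the other coordinates. Both are increasing, so negative association gives
\[
\EE\bigl[\one\{i\in S\}\,e\bigl((S\cap H)\setminus\{i\}\bigr)\bigr]\ge z_i\,\EE\bigl[e\bigl((S\cap H)\setminus\{i\}\bigr)\bigr]\ge z_i\,\EE[e(S\cap H)],
\]
where the last step uses monotonicity of $e$. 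Writing $\bar e=\EE[e(S\cap H)]$ and using $z_i\ge x_i$ for $i\in H$, the decomposition yields
\[
\EE[R_k^-(S\cap H)]\ge\tau(1-\bar e)+\bar e\sum_{i\in H}x_iq_i(\widehat r_i-\tau)^+.
\]
The sum over $H$ coincides with the sum over $[n]$, since $(\widehat r_i-\tau)^+=0$ for $i\notin H$. By~\eqref{eq:tau} it therefore equals $\tau$, and the whole bound equals $\tau$.
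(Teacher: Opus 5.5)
Your proof is correct and follows essentially the same route as the paper. The first inequality comes from evaluating Program~\eqref{eq:main_convex} at $\theta=\tau$, and the second from splitting the distorted revenue around $\tau$ and using negative association to obtain $\EE[\widehat\phi_{\mathbf v}(i,B)]\ge x_i\frac{\widehat v_i}{1+\widehat v_i}\EE[\widehat\phi_{\mathbf v}(0,B)]$ for $B=S\cap H$ and $i\in H$. The only difference is cosmetic: you decouple $\one\{i\in S\}$ from the reduced-set exit probability $\widehat\phi_{\mathbf v}(0,B\setminus\{i\})$ and then use its monotonicity, whereas the paper decouples it from $(1+v_i)\one\{|B\setminus\{i\}|<k\}/(1+v_i+V(B\setminus\{i\}))$ and compares that function pointwise with $\widehat\phi_{\mathbf v}(0,B)$.
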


\begin{proof}
For the first inequality, evaluate the objective of
Program~\eqref{eq:main_convex} at $\theta=\tau$.
By Remark~\ref{remark:main_programs} and
Equation~\eqref{eq:tau},
\[
    \LP(\mathbf{x},\widehat{\mathbf{v}},
        \widehat{\mathbf{r}})
    \le
    \tau+
    \sum_{i\in[n]}
    x_i\frac{\widehat v_i}{1+\widehat v_i}
    (\widehat r_i-\tau)^+
    =
    2\tau.
\]

We next prove the revenue lower bound.
Draw $S\sim\alpha$ and let $B=S\cap H$.
Since negative association is preserved under restriction
to a subset of coordinates, the distribution of $B$
is negatively associated.
Moreover, $\PP(i\in B)=z_i$ for every $i\in H$.
Define the unconditional choice probabilities under
the distorted model by
\[
    q_i=
    \EE\bigl[\widehat{\phi}_{\mathbf{v}}(i,B)\bigr]
    \quad(i\in H),
    \qquad
    q_0=
    \EE\bigl[\widehat{\phi}_{\mathbf{v}}(0,B)\bigr].
\]
Because the distorted model is a valid choice model,
these probabilities satisfy
$\sum_{i\in H}q_i=1-q_0$.

Fix $i\in H$.
If $i\in B$ and $|B\setminus\{i\}|<k$, then $|B|\le k$
and hence $T(B)=B$.
Therefore,
\begin{align*}
    q_i
    &=
    \left(v_i+\frac1k\right)
    \EE\left[
        \frac{\one\{i\in T(B)\}}{1+V(T(B))}
    \right] \ge
    \left(v_i+\frac1k\right)
    \EE\left[
        \frac{
            \one\{i\in B\}
            \one\{|B\setminus\{i\}|<k\}
        }{
            1+v_i+V(B\setminus\{i\})
        }
    \right].
\end{align*}
The indicator $\one\{i\in B\}$ is increasing in
coordinate $i$, whereas the remaining factor is
decreasing in the other coordinates.
Applying negative association to the first indicator
and the negative of the remaining factor gives
\begin{align*}
    q_i
    &\ge
    \left(v_i+\frac1k\right)
    \EE\bigl[\one\{i\in B\}\bigr]\,
    \EE\left[
        \frac{
            \one\{|B\setminus\{i\}|<k\}
        }{
            1+v_i+V(B\setminus\{i\})
        }
    \right]=
    z_i\frac{\widehat v_i}{1+\widehat v_i}
    \EE\left[
        \frac{
            (1+v_i)\one\{|B\setminus\{i\}|<k\}
        }{
            1+v_i+V(B\setminus\{i\})
        }
    \right].
\end{align*}

For every realization of $B$,
\[
    \widehat{\phi}_{\mathbf{v}}(0,B)
    =
    \frac{(1-|B|/k)^+}{1+V(T(B))}
    \le
    \frac{\one\{|B\setminus\{i\}|<k\}}{1+V(B)}
    \le
    \frac{
        (1+v_i)\one\{|B\setminus\{i\}|<k\}
    }{
        1+v_i+V(B\setminus\{i\})
    }.
\]
For the first inequality, the exit probability is zero
when $|B|\ge k$; when $|B|<k$, we have $T(B)=B$ and
the indicator equals one.
The second inequality follows by considering whether
$i\in B$.
Taking expectations and combining the preceding bounds
yields
\[
    q_i
    \ge
    z_i\frac{\widehat v_i}{1+\widehat v_i}q_0
    \ge
    x_i\frac{\widehat v_i}{1+\widehat v_i}q_0,
\]
where the last inequality uses $z_i\ge x_i$ for $i\in H$.

Using~\eqref{eq:distorted_revenue} and
$\widehat r_i\ge\tau$ for every $i\in H$, we obtain
\begin{align*}
    \EE[R_k^-(B)]
    =
    \sum_{i\in H}q_i\widehat r_i
    &=
    \tau\sum_{i\in H}q_i
    +
    \sum_{i\in H}q_i(\widehat r_i-\tau)\\
    &\ge
    \tau(1-q_0)
    +
    q_0\sum_{i\in H}
    x_i\frac{\widehat v_i}{1+\widehat v_i}
    (\widehat r_i-\tau)\\
    &=
    \tau(1-q_0)
    +
    q_0\sum_{i\in[n]}
    x_i\frac{\widehat v_i}{1+\widehat v_i}
    (\widehat r_i-\tau)^+=
    \tau,
\end{align*}
where the last line follows from~\eqref{eq:tau}.
This proves the revenue lower bound.
\end{proof}

We can now complete the proof of Theorem~\ref{thm:mnl}.

\begin{proof}[Proof of Theorem~\ref{thm:mnl}]
Product distributions are negatively associated because
functions of disjoint coordinates are independent.
Thus, under independent product availabilities,
$A\sim\cI(\mathbf{x})$ satisfies the assumptions of
Lemma~\ref{lem:threshold_lower_bound} with
$\mathbf{z}=\mathbf{x}$.
Combining this lemma with
Lemma~\ref{lem:magician_upper_bound} gives
\[
    \EE_{A\sim\cI(\mathbf{x})}[R_k^-(A\cap H)]
    \ge
    \tau
    \ge
    \frac12
    \LP(\mathbf{x},\widehat{\mathbf{v}},
        \widehat{\mathbf{r}})
    \ge
    \frac12\OPT_{\M}.
\]
Finally, $\tau$ can be computed in polynomial time by
sorting the distorted revenues, locating the interval
containing the solution of~\eqref{eq:tau}, and solving
the corresponding linear equation.
The set $H$ can then be constructed by comparing each
distorted revenue with $\tau$.
\end{proof}

\subsection{Extension to the ranked MNL choice model}
\label{subsection:ranked_mnl}

We extend our results to the \emph{ranked MNL choice model},
a generalization of MNL that simplifies the analysis of
several applications.
Each option $i\in[n]\cup\{0\}$ has a nonnegative integer
rank $s_i$ and a preference weight $v_i\ge0$.
We require $v_i>0$ whenever $s_i>0$.
The exit option has rank $s_0=0$ and weight $v_0=1$.
For an assortment $S\subseteq[n]$, let
\[
    S_{\mathbf{s}}
    =
    \left\{
        i\in S\cup\{0\}:
        s_i=\max_{h\in S\cup\{0\}}s_h
    \right\}
\]
denote the set of options with the highest rank.
The ranked MNL choice probabilities are
\[
    \phi_{\mathbf{v},\mathbf{s}}(i,S)
    =
    \one\{i\in S_{\mathbf{s}}\}
    \frac{v_i}{\sum_{h\in S_{\mathbf{s}}}v_h}
    \qquad(i\in[n]\cup\{0\}).
\]
Thus, the customer chooses among the highest-ranked
options, with probabilities proportional to their
preference weights.
When all products have rank zero, this model coincides
with the standard MNL choice model.
We define $R(S)$ and $R_k^\pm(S)$ using these choice
probabilities, with the same conventions as before.

The ranked MNL choice model can be obtained as a limit
of standard MNL models.
For $M>1$, assign each product $i\in[n]$ the MNL weight
$v_i(M)=v_iM^{s_i}$, keeping the exit weight equal to one.
For every assortment $S\subseteq[n]$ and option
$i\in[n]\cup\{0\}$,
\[
    \lim_{M\to\infty}
    \phi_{\mathbf{v}(M)}(i,S)
    =
    \phi_{\mathbf{v},\mathbf{s}}(i,S).
\]
The corresponding limits of the distorted weights
and revenues are
\[
    \widehat v_{i,\mathbf{s}}
    =
    \begin{cases}
        \displaystyle\frac{1+kv_i}{k-1},
        & s_i=0 \text{ and }1<k<\infty,\\[2mm]
        +\infty,
        & s_i>0 \text{ or }k=1,\\
        v_i,
        & s_i=0 \text{ and }k=\infty,
    \end{cases}
    \qquad
    \widehat r_{i,\mathbf{s}}
    =
    \begin{cases}
        \displaystyle\frac{kv_i}{1+kv_i}r_i,
        & s_i=0 \text{ and }k<\infty,\\[2mm]
        r_i,
        & s_i>0 \text{ or }k=\infty.
    \end{cases}
\]
As before, we interpret
$\widehat v_{i,\mathbf{s}}/(1+\widehat v_{i,\mathbf{s}})$
as one when $\widehat v_{i,\mathbf{s}}=+\infty$.
The threshold $\tau$ is defined by
Equation~\eqref{eq:tau} with these distorted parameters,
and the policy commits to
$H=\{i\in[n]:\widehat r_{i,\mathbf{s}}\ge\tau\}$.

Convergence of the choice probabilities implies convergence of the
assortment revenues and the optimal benchmark values.
The LP values also converge through the coefficients
$\widehat v_i/(1+\widehat v_i)$ and the distorted revenues.
Consequently, Theorem~\ref{thm:mnl},
Lemma~\ref{lem:magician_upper_bound}, and
Lemma~\ref{lem:threshold_lower_bound} extend to the
ranked MNL choice model.

When all products have rank zero, or when the rank vector
is clear from context, we omit $\mathbf{s}$ from the
subscripts.

\section{Two-sided assortment optimization}\label{section:tsao}

Two-sided assortment optimization (TSAO), introduced by
\citet{ashlagi_etal19}, concerns matching between two sets of agents:
a set $\cC$ of customers and a set $\cS$ of suppliers. The platform offers each agent
an assortment of agents from the other side of the market. Each
agent then selects an offered agent or the exit option according
 to its own choice model. 
 A match forms when two agents select each other. The
platform seeks to maximize the expected total revenue from these
matches; maximizing the expected number of matches is the special
case in which every match has unit revenue.
We apply the framework of Section~\ref{section:main_result} to two variants of TSAO: standard TSAO \citep{ashlagi_etal19,torrico21} and fully static TSAO \citep{ahmed2022parameterized,housni2024two}.
For both variants, we consider a more general setting that allows pair-dependent revenues together with cardinality constraints on both sides of the market.
We obtain a $1/2$ guarantee for the standard model and a $1/4$ guarantee for the fully static model.

\subsection{Standard two-sided assortment optimization}\label{section:standard_tsao}

We use \emph{standard TSAO} to describe models in which one side of the market initiates the matching process and the other side responds.
Throughout this subsection, customers are the initiating side: the platform offers them assortments of suppliers, and each customer selects a supplier or the exit option.
After all customers have chosen, each supplier chooses among the customers who selected it, subject to the platform's rule for displaying these customers.

\citet{ashlagi_etal19} introduced this setting with static customer assortments, fixed before any choices are observed, and obtained the first constant-factor approximation for maximizing the expected number of matches.
\citet{torrico21} improved the approximation guarantee and extended the setting to more general preferences, supplier-dependent revenues, and constraints on customer assortments.
\citet{housni2024two} studied adaptive policies, in which customers are processed one at a time and subsequent assortments may depend on earlier choices.
They obtained constant-factor guarantees for maximizing the expected number of matches and studied the value of adaptivity.
More recent work allows arbitrary pair-dependent revenues under MNL choice.
\citet{nissim2024revenue} study static customer assortments under two supplier display rules: the \emph{inclusive} rule displays every customer who selected a supplier, whereas the \emph{customized} rule allows the platform to select a subset.
In a recent revision of the same paper, concurrent with our work, they obtain a $1/2$ guarantee for an inclusive policy against the optimal static customized policy.
\citet{ahmadnejadsaein2025adaptive} study adaptive policies with customization and obtain a static $(1/2-\varepsilon)$-approximation against the optimal adaptive customized policy.
These results for arbitrary pair-dependent revenues do not impose cardinality constraints.

Our framework gives a common analysis of these MNL models and yields a simple static policy with a $1/2$ guarantee in a setting that combines pair-dependent revenues with cardinality constraints on both sides.
The result improves approximation factors or strengthens benchmarks relative to prior revenue-maximization results, as summarized in Table~\ref{tab:application_improvements}, while extending the guarantees to constrained assortments.
Cardinality constraints introduce an additional difficulty: feasible customer assortments can generate supplier backlogs that exceed the supplier display limits.
Removing customers from these backlogs changes the supplier's choice probabilities and may reduce expected revenue.
Our guarantee accounts for this loss even when the retained customers are selected adversarially.

\paragraph{Constrained standard TSAO.}
There is a set $\cC$ of customers and a set $\cS$ of suppliers.
Each customer $i\in\cC$ has an MNL choice model with preference weights $(v_{ij})_{j\in\cS}$, and each supplier $j\in\cS$ has an MNL choice model with preference weights $(w_{ji})_{i\in\cC}$.
All weights are nonnegative, and each agent's exit option has weight one.
A match between customer $i$ and supplier $j$ generates revenue $r_{ij}\ge0$.
The platform seeks to maximize expected total revenue.
Each agent's choice is sampled from its MNL model using a fresh random draw independent of the draws used for other agents.

The platform offers each customer $i$ an assortment $S_i\subseteq\cS$ with $|S_i|\le k$, where $k$ is a positive integer.
A customer policy is \emph{static} if all customer assortments are fixed before any customer chooses.
It is \emph{adaptive} if customers are processed one at a time and the platform may choose the next customer and its assortment based on the choices observed so far.
Customer $i$ selects supplier $j\in S_i$ with probability
$
    v_{ij}/(1+\sum_{h\in S_i}v_{ih}),
$
and selects the exit option with the remaining probability.

After all customers have chosen, supplier $j$ has the random \emph{backlog}
$
    B_j=\{i\in\cC:i\text{ chose }j\}.
$
The assortment displayed to supplier $j$ must be a subset $T_j\subseteq B_j$ with $|T_j|\le k$.
Supplier $j$ selects customer $i\in T_j$ with probability
$
    w_{ji}/(1+\sum_{h\in T_j}w_{jh}),
$
and selects the exit option with the remaining probability.
Selecting customer $i$ forms a match because that customer has already selected supplier~$j$.

We distinguish two rules for determining $T_j$.
Under \emph{customization}, the platform may choose any subset of $B_j$ containing at most $k$ customers.
Under \emph{adversarial truncation}, the entire backlog is displayed when $|B_j|\le k$; otherwise, an adversary selects the $k$ customers whose display minimizes expected revenue from supplier $j$.
When $k\ge|\cC|$, adversarial truncation displays every supplier's entire backlog and coincides with the inclusive rule.
The unconstrained model is recovered by taking $k\ge\max\{|\cC|,|\cS|\}$.
We use a common limit $k$ for simplicity; the result also extends to agent-dependent cardinality constraints.

Let $\OPT_{\AC}$ denote the maximum expected revenue of an adaptive customer policy with customized supplier assortments, subject to these cardinality constraints.
We compare our static policy, evaluated under adversarial truncation, with this adaptive customized benchmark.

\begin{theorem}\label{thm:standard_tsao}
For constrained standard TSAO, there is a randomized polynomial-time static policy whose expected revenue under adversarial truncation is at least $\frac12\OPT_{\AC}$.
\end{theorem}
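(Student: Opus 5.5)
We plan to combine an LP relaxation of the customer side with the supplier-side guarantee of Lemma~\ref{lem:threshold_lower_bound}, applied separately to each supplier $j$.

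\textbf{Step 1: the LP relaxation.} For an adaptive customized policy, let $y_{iS}$ be the probability that customer $i$ is offered assortment $S$ with $|S|\le k$. Set
\[
    x_{ij}=\sum_{S\ni j} y_{iS}\frac{v_{ij}}{1+\sum_{h\in S}v_{ih}},
\]
the probability that $i$ chooses $j$. These quantities satisfy $\sum_S y_{iS}\le 1$.

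For a fixed supplier $j$, the backlog $B_j$ has marginals $\PP(i\in B_j)=x_{ij}$, though its joint law may be correlated under adaptivity. The supplier's revenue under customization is therefore at most the magician value for supplier $j$ with availability vector $(x_{ij})_i$, weights $(w_{ji})_i$ and revenues $(r_{ij})_i$. By Lemma~\ref{lem:magician_upper_bound}, this is at most $\LP_j(\mathbf{x}_{\cdot j})=\LP(\mathbf{x}_{\cdot j},\widehat{\mathbf w}_j,\widehat{\mathbf r}_{\cdot j})$.

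Using the dual form in Program~\eqref{eq:main_convex}, $\LP_j$ is concave in $\mathbf{x}_{\cdot j}$, since it is a minimum over $\theta$ of functions linear in $\mathbf x$. Consequently,
\[
    \OPT_{\AC}\le\max_{y}\sum_j \LP_j(\mathbf{x}_{\cdot j}(y)),
\]
a concave program over the polytope $\{y\ge0:\sum_S y_{iS}\le1\}$. It can be rewritten as an LP with variables $p_{ij}$ and constraints $p_{ij}\le x_{ij}\widehat w_{ji}/(1+\widehat w_{ji})$ and $\sum_i p_{ij}\le1$.

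This LP has exponentially many $y$ variables. We would solve it through its dual, whose separation problem for customer $i$ is a cardinality-constrained MNL assortment problem with item values $\lambda_{ij}$, which is polynomially solvable. Alternatively, we would replace $y$ by the sales-based formulation of the MNL polytope under a cardinality constraint, which is polynomial-size.

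\textbf{Step 2: the static policy.} Independently for each customer $i$, draw $S_i$ with probability $y^*_{iS}$ and offer it, leaving the customer with no offer with the remaining probability. Customers then choose independently, so the events $\{i\in B_j\}$ over $i$ are independent for fixed $j$, with $\PP(i\in B_j)=x^*_{ij}$.

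The distribution of $B_j$ is thus a product distribution. This follows because each customer chooses at most one supplier, and independence across customers suffices for a single $j$.

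For supplier $j$, Lemma~\ref{lem:threshold_lower_bound} requires restricting to the threshold set $H_j$. But adversarial truncation acts on the whole backlog $B_j$, not on $B_j\cap H_j$, so we modify the customer assortments to remove $j$ from $S_i$ whenever $i\notin H_j$. Removing $j$ can only increase $i$'s probability of choosing any other supplier $j'$, so $z_{ij'}\ge x^*_{ij'}$ for $i\in H_{j'}$. Independence across customers keeps each $B_j$ a product distribution, hence negatively associated.

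Lemma~\ref{lem:threshold_lower_bound}, applied with $\mathbf z\ge\mathbf x^*$ on $H_j$, then gives expected supplier-$j$ revenue at least $\tau_j\ge\frac12\LP_j(\mathbf x^*_{\cdot j})$. Summing over $j$ yields at least $\frac12\OPT_{\AC}$.

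\textbf{Main obstacle.} The hardest part will be Step 1. We must justify that the benchmark's per-supplier revenue is bounded by the magician value despite adaptive correlation, since only the marginal bounds are needed. We must also handle the polynomial-time solution of the concave relaxation with cardinality-constrained MNL customers. We would first try the sales-based LP, taking $\theta_j,\lambda_{ij}$ as dual variables and linearizing $(r-\theta)^+$ through the primal $p_{ij}$ form.
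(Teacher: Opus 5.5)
Your proposal is correct and follows the paper's proof. It uses the same relaxation: the per-supplier bounds $\LP(\mathbf{x}_j,\widehat{\mathbf{w}}_j,\widehat{\mathbf{r}}_j)$ are summed over feasible customer choice probabilities. The policy is also the same: independent sampling of customer assortments, then threshold filtering so that $B_j\subseteq H_j$ with dominating marginals, then Lemma~\ref{lem:threshold_lower_bound} applied to each supplier. The only difference is how feasible customer choice vectors are represented. Your main route uses explicit assortment-distribution variables $y_{iS}$, solved through the dual with a cardinality-constrained MNL separation oracle, which yields the sampling distribution directly. The paper instead uses your stated alternative: the polynomial-size sales-based polytope $\cA_i$ together with the decomposition of Lemma~\ref{lem:standard_choice_polytope}.
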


Thus, even the optimal policy with adaptive customer assortments and customized supplier assortments earns at most twice the expected revenue of our static policy under adversarial truncation.
To the best of our knowledge, this is the first constant-factor approximation for standard TSAO with arbitrary pair-dependent revenues and cardinality constraints on both sides of the market.

\paragraph{Our solution.}
We apply the framework of Section~\ref{section:main_result} separately to each supplier.
For a fixed supplier $j$, view each customer $i$ as a product with preference weight $w_{ji}$ and revenue $r_{ij}$, available when customer $i$ chooses $j$.
Under an adaptive customer policy, these availability events may be dependent.
The \textsf{\ref{eq:opt_m}} benchmark allows such dependence, so Lemma~\ref{lem:magician_upper_bound} provides an upper bound on each supplier's contribution.
For our static policy, we sample customer assortments independently and implement the threshold commitments of Lemma~\ref{lem:threshold_lower_bound} by filtering these assortments before displaying them. 

The algorithm therefore has two tasks: select feasible customer choice probabilities through an LP, and construct customer assortments that satisfy the hypotheses of the threshold guarantee.
The characterization below connects the LP variables to feasible random assortments.

\paragraph{Feasible customer choice probabilities.}
Let $x_{ij}$ represent the probability that customer $i$ chooses supplier $j$, equivalently the availability probability of customer $i$ in the single-customer instance associated with supplier $j$.
For an array $\mathbf{x}=(x_{ij})_{i\in\cC,j\in\cS}$, write $\mathbf{x}_i=(x_{ij})_{j\in\cS}$ for a customer row and $\mathbf{x}_j=(x_{ij})_{i\in\cC}$ for a supplier column.
We similarly write $\mathbf{v}_i=(v_{ij})_{j\in\cS}$, $\mathbf{w}_j=(w_{ji})_{i\in\cC}$, and $\mathbf{r}_j=(r_{ij})_{i\in\cC}$.
For $A\subseteq\cS$ and $B\subseteq\cC$, let $V_i(A)=\sum_{j\in A}v_{ij}$ and $W_j(B)=\sum_{i\in B}w_{ji}$.
The expected revenue from displaying $B$ to supplier $j$ is
\[
    R_j(B)
    =
    \frac{\sum_{i\in B}r_{ij}w_{ji}}{1+W_j(B)}.
\]
We use $R_{j,k}^-$ and $R_{j,k}^+$ for the corresponding adversarial-truncation and customization revenue functions.

A supplier with $v_{ij}=0$ is never chosen by customer $i$ and may be removed from its assortment without changing its choice probabilities.
Throughout, we fix $x_{ij}=0$ whenever $v_{ij}=0$ and interpret the corresponding term $x_{ij}/v_{ij}$ as zero.
For each customer $i\in\cC$, define
\[
    \cA_i
    =
    \left\{
        \mathbf{x}_i\in\RR_+^{\cS}:
            \frac{x_{ij}}{v_{ij}}
            +\sum_{h\in\cS}x_{ih}
            \le1 \; 
            \forall j\in\cS,
            \sum_{j\in\cS}\frac{x_{ij}}{v_{ij}}
            +k\sum_{h\in\cS}x_{ih}
            \le k    
    \right\}.
\]

This polytope is the cardinality-constrained specialization of the MNL formulation of \citet{davis2013assortment}, after eliminating the exit-probability variable.
The following lemma gives both its interpretation and the decomposition needed by our algorithm.

\begin{lemma}\label{lem:standard_choice_polytope}
The polytope $\cA_i$ is the convex hull of the MNL choice-probability vectors of assortments $A\subseteq\cS$ with $|A|\le k$.
Moreover, for every $\mathbf{x}_i\in\cA_i$, a distribution for a random set $A_i$ supported on polynomially many such assortments and satisfying
$
    \EE\bigl[\phi_{\mathbf{v}_i}(j,A_i)\bigr]
    =
    x_{ij}   
$ for $ j\in\cS $
can be computed in polynomial time.
\end{lemma}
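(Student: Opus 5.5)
The plan is to reparametrize $\cA_i$ by the exit probability, which reduces the lemma to decomposing a point of the uniform-matroid polytope into sets of size at most $k$, and then to rescale that decomposition back into MNL choice vectors. For $\mathbf{x}_i\in\RR_+^{\cS}$, set $x_{i0}=1-\sum_{h\in\cS}x_{ih}$, which plays the role of the exit probability. The two families of constraints defining $\cA_i$ then read
\[
    x_{ij}\le v_{ij}x_{i0}\quad(j\in\cS),
    \qquad
    \sum_{j\in\cS}\frac{x_{ij}}{v_{ij}}\le k\,x_{i0}.
\]
These are exactly the Davis et al.\ constraints with the exit variable eliminated.

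\emph{Assortment vectors lie in $\cA_i$.} Take $A\subseteq\cS$ with $|A|\le k$, and let $x_{ij}=\phi_{\mathbf{v}_i}(j,A)$. Then $x_{i0}=1/(1+V_i(A))$. Moreover, $x_{ij}/v_{ij}=x_{i0}$ for $j\in A$ with $v_{ij}>0$, and $x_{ij}/v_{ij}=0$ otherwise. Both constraints follow immediately, the second from $|A|\le k$. Since $\cA_i$ is convex, the convex hull of these vectors is contained in $\cA_i$.

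\emph{Points of $\cA_i$ are convex combinations of assortment vectors.} First I would check that $x_{i0}>0$ for every $\mathbf{x}_i\in\cA_i$. Indeed, if $x_{i0}=0$, the first constraint forces every $x_{ij}=0$, and then $x_{i0}=1$, a contradiction. This lets me define $y_j=x_{ij}/(v_{ij}x_{i0})$, with $y_j=0$ when $v_{ij}=0$. The constraints give $\mathbf{y}\in[0,1]^{\cS}$ and $\sum_j y_j\le k$.

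Next, decompose $\mathbf{y}$ as $\sum_A\mu_A\one_A$. Here $\mu$ is a probability distribution on sets of size at most $k$, and the empty set is allowed. I would do this explicitly by systematic sampling, as follows.
\begin{itemize}
\item Lay the intervals of lengths $y_j$ consecutively on $[0,\sum_j y_j)$.
\item Draw $u$ uniformly from $[0,1)$.
\item Let $A(u)$ be the set of $j$ whose interval contains a point of $u+\mathbb{Z}_{\ge0}$.
\end{itemize}
Each interval has length at most one, so it contains at most one such point, and therefore $\PP(j\in A(u))=y_j$. There are at most $\lceil\sum_j y_j\rceil\le k$ such points in $[0,\sum_j y_j)$, so $|A(u)|\le k$, using that $k$ is an integer. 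Finally, $A(u)$ is piecewise constant in $u$ with at most $|\cS|+1$ breakpoints. Hence $\mu$ has polynomial support and is computable in polynomial time.

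\emph{Rescaling back to choice vectors.} Set $\lambda_A=\mu_A\,x_{i0}(1+V_i(A))\ge0$. Then
\[
    \sum_A\lambda_A\phi_{\mathbf{v}_i}(j,A)
    =x_{i0}v_{ij}\sum_A\mu_A\one\{j\in A\}
    =x_{i0}v_{ij}y_j
    =x_{ij}.
\]
Also,
\[
    \sum_A\lambda_A
    =x_{i0}\Bigl(1+\sum_j v_{ij}y_j\Bigr)
    =x_{i0}+\sum_j x_{ij}
    =1.
\]
So $\lambda$ is a probability distribution, and drawing $A_i$ according to $\lambda$ gives $\EE[\phi_{\mathbf{v}_i}(j,A_i)]=x_{ij}$. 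This proves both the convex-hull characterization and the algorithmic claim.

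I expect the main obstacle to be the decomposition step. It must simultaneously guarantee the cardinality bound for every set in the support, keep the support polynomial, and be computable efficiently; the systematic-sampling construction is what I would use to handle all three. The other point needing care is the rescaling: the mixing weights must be corrected by the factor $1+V_i(A)$, and it is the identity $x_{i0}+\sum_j x_{ij}=1$ that makes these weights sum to one.
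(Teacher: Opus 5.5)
Your proof is correct. It shares the paper's skeleton:
\begin{itemize}
\item the same check that feasible assortment vectors lie in $\cA_i$;
\item the same argument that the exit probability $x_{i0}=1-\sum_h x_{ih}$ is strictly positive;
\item the same rescaling $y_j=x_{ij}/(v_{ij}x_{i0})$ into $\{\mathbf{y}\in[0,1]^{\cS}:\sum_j y_j\le k\}$;
\item the same reweighting by $x_{i0}(1+V_i(A))$, which turns a decomposition of $\mathbf{y}$ into a distribution over assortments.
\end{itemize}

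The genuine difference is how you decompose $\mathbf{y}$. The paper first shows, by a perturbation argument, that the vertices of this polytope are incidence vectors of sets of size at most $k$. It then computes the coefficients by running the ellipsoid method on the feasibility LP over all assortments, invoking the optimization--separation equivalence with a sorting-based separation oracle.

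Your systematic-sampling construction replaces both steps at once. It proves that $\mathbf{y}$ is a convex combination of such incidence vectors. It also produces an explicit, ellipsoid-free decomposition supported on at most $|\cS|+1$ sets, obtained by sorting the interval endpoints modulo one. This is more elementary and more efficient than the paper's route.

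What the paper's route buys is generality. The vertex and separation argument carries over unchanged to any family of customer-side constraints over which linear optimization is tractable. Your construction, by contrast, exploits the specific box-plus-cardinality structure.

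Nothing is lost by using systematic sampling here, even though it does not give negative association. The lemma only needs the marginal identity $\EE[\phi_{\mathbf{v}_i}(j,A_i)]=x_{ij}$, and independence across customers comes from sampling each $A_i$ separately.

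One small point: you rule out $x_{i0}=0$, but you also need $x_{i0}\ge 0$. This follows at once from the first constraint of $\cA_i$ in its original form, since $x_{ij}/v_{ij}\ge 0$. It also follows by applying your argument verbatim to $x_{i0}<0$.
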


The proof of Lemma~\ref{lem:standard_choice_polytope} is included in Appendix~\ref{appendix:standard_choice_polytope}.
The convex-hull characterization ensures that the unconditional choice vector of any adaptive customer policy belongs to $\cA_i$.
Conversely, the decomposition allows us to implement any vector in $\cA_i$ by sampling a feasible customer assortment.

\paragraph{LP relaxation.}
For each supplier $j$, obtain the distorted weights $\widehat w_{ji}$ and revenues $\widehat r_{ij}$ by applying the definitions in Section~\ref{subsection:distorted_mnl_choice_model} to weights $w_{ji}$, revenues $r_{ij}$, and cardinality limit $k$.
Write $\widehat{\mathbf{w}}_j$ and $\widehat{\mathbf{r}}_j$ for the corresponding vectors, and let $\Theta_j=\{0\}\cup\{\widehat r_{ij}:i\in\cC\}$.

For given customer choice probabilities $\mathbf{x}$, Lemma~\ref{lem:magician_upper_bound} bounds supplier $j$'s contribution by
$\LP(\mathbf{x}_j,\widehat{\mathbf{w}}_j,\widehat{\mathbf{r}}_j)$.
Combining these supplier bounds with the customer choice polytopes gives the relaxation
\begin{align}
    \LP_{\ST}=\max\quad
    &\sum_{j\in\cS}\zeta_j
    \label{eq:lp_standard}\\
    \suchthat\quad
    &\zeta_j\le
      \theta+
      \sum_{i\in\cC}
      x_{ij}\frac{\widehat w_{ji}}{1+\widehat w_{ji}}
      (\widehat r_{ij}-\theta)^+
      &&\forall j\in\cS,\ \forall\theta\in\Theta_j,
      \notag\\
    &\mathbf{x}_i\in\cA_i
      &&\forall i\in\cC,
      \notag\\
    &\mathbf{x}\in\RR_+^{\cC\times\cS},
      \qquad
      \boldsymbol{\zeta}\in\RR_+^{\cS}.
      \notag
\end{align}
By Remark~\ref{remark:main_programs}, the constraints indexed by $\Theta_j$ are equivalent to
\[
    \zeta_j
    \le
    \LP\bigl(
        \mathbf{x}_j,
        \widehat{\mathbf{w}}_j,
        \widehat{\mathbf{r}}_j
    \bigr).
\]
For fixed $\mathbf{x}$, maximizing over $\boldsymbol{\zeta}$ makes each of these inequalities tight.
Thus, LP~\eqref{eq:lp_standard} maximizes the sum of the supplier upper bounds over feasible customer choice probabilities.
The LP has polynomially many variables and constraints.

\paragraph{Threshold policy.}
Take an optimal solution $(\mathbf{x},\boldsymbol{\zeta})$ of LP~\eqref{eq:lp_standard}.
For each supplier $j$, apply the threshold construction of Section~\ref{subsection:threshold_policy} with reference availability probabilities $\mathbf{x}_j$, weights $\mathbf{w}_j$, and revenues $\mathbf{r}_j$.
This gives a threshold $\tau_j$ and a set $H_j$ of eligible customers.

We implement these supplier commitments through the customer assortments.
For each customer $i$, independently sample a feasible assortment $A_i$ using Lemma~\ref{lem:standard_choice_polytope}, and retain supplier $j$ only if $i\in H_j$.
The resulting assortment is $S_i=\{j\in A_i:i\in H_j\}$.
This filtering preserves the customer cardinality constraint and ensures that supplier $j$'s backlog is contained in $H_j$.
Moreover, removing other suppliers from a customer's assortment can only increase the choice probability of a retained supplier.
The actual availability probabilities therefore dominate the reference probabilities on each threshold set, as required by Lemma~\ref{lem:threshold_lower_bound}. The steps of our policy are summarized in Algorithm \ref{alg:standard_tsao}.

\begin{algorithm}[!htbp]
    \caption{Static threshold policy for standard TSAO}
    \label{alg:standard_tsao}
    \begin{algorithmic}[1]
        \State Solve LP~\eqref{eq:lp_standard} and obtain
        an optimal solution $(\mathbf{x},\boldsymbol{\zeta})$.
        \For{each supplier $j\in\cS$}
            \State Compute the threshold $\tau_j\ge0$ satisfying
            \Statex \hspace{\algorithmicindent}
            $\displaystyle
                \tau_j=
                \sum_{i\in\cC}
                x_{ij}
                \frac{\widehat w_{ji}}{1+\widehat w_{ji}}
                (\widehat r_{ij}-\tau_j)^+.
            $
            \State Set
            $H_j=\{i\in\cC:\widehat r_{ij}\ge\tau_j\}$.
        \EndFor
        \For{each customer $i\in\cC$}
            \State Compute the assortment distribution for
            $\mathbf{x}_i$ from
            Lemma~\ref{lem:standard_choice_polytope}.
            \State Independently sample an assortment $A_i$
            from this distribution.
            \State Set $S_i=\{j\in A_i:i\in H_j\}$.
        \EndFor
        \State Display $S_i$ to every customer $i\in\cC$
        and observe the customer choices.
        \For{each supplier $j\in\cS$}
            \State Display its backlog $B_j$, subject to
            adversarial truncation to $\min\{|B_j|,k\}$ customers.
        \EndFor
    \end{algorithmic}
\end{algorithm}

\begin{proof}[Proof of Theorem~\ref{thm:standard_tsao}]
Let $\ALG_{\ST}$ denote the expected revenue of Algorithm~\ref{alg:standard_tsao}.
We prove
\[
    \OPT_{\AC}
    \le
    \LP_{\ST}
    \le
    2\ALG_{\ST}.
\]

\emph{Upper bound on the adaptive customized optimum.}
Fix an adaptive customized policy, and let $x_{ij}$ be the probability that customer $i$ chooses supplier $j$.
Conditional on the history before customer $i$ is processed and on the assortment offered to it, its choice probabilities are those of a feasible MNL assortment.
Averaging over these histories and assortments expresses $\mathbf{x}_i$ as a convex combination of feasible choice-probability vectors.
Hence $\mathbf{x}_i\in\cA_i$ by Lemma~\ref{lem:standard_choice_polytope}.

Fix a supplier $j$.
Its backlog has marginal inclusion probabilities $\mathbf{x}_j$, although its inclusion indicators may be dependent.
Its backlog distribution therefore belongs to $\cP(\mathbf{x}_j)$.
Thus, the policy's expected contribution from supplier $j$ is at most
\begin{align*}
    \max_{\alpha\in\cP(\mathbf{x}_j)}
    \EE_{B\sim\alpha}[R_{j,k}^+(B)]
    &\le
    \LP\bigl(
        \mathbf{x}_j,
        \widehat{\mathbf{w}}_j,
        \widehat{\mathbf{r}}_j
    \bigr)=
    \min_{\theta\in\Theta_j}
    \left\{
        \theta+
        \sum_{i\in\cC}
        x_{ij}\frac{\widehat w_{ji}}{1+\widehat w_{ji}}
        (\widehat r_{ij}-\theta)^+
    \right\}.
\end{align*}
The inequality follows from Lemma~\ref{lem:magician_upper_bound}, and the equality follows from Remark~\ref{remark:main_programs}.
Setting $\zeta_j$ equal to the policy's expected contribution from supplier $j$ therefore gives a feasible solution of LP~\eqref{eq:lp_standard}.
Its objective equals the policy's expected total revenue.
Since the policy was arbitrary, this proves $\OPT_{\AC}\le\LP_{\ST}$.

\emph{Performance of the static policy.}
Take an optimal solution $(\mathbf{x},\boldsymbol{\zeta})$ of LP~\eqref{eq:lp_standard} and run Algorithm~\ref{alg:standard_tsao}.
All customer assortments are fixed before any customer chooses.
Moreover, $S_i\subseteq A_i$ and $|A_i|\le k$ imply $|S_i|\le k$.
Thus, the algorithm defines a feasible static policy.

Let $z_{ij}$ be the resulting probability that customer $i$ chooses supplier $j$.
Before filtering, the sampled assortment $A_i$ satisfies
$\EE[\phi_{\mathbf{v}_i}(j,A_i)]=x_{ij}$ by Lemma~\ref{lem:standard_choice_polytope}.
For $i\in H_j$, filtering retains $j$ whenever $j\in A_i$ and removes only other suppliers.
Consequently,
\[
    z_{ij}
    =
    \EE[\phi_{\mathbf{v}_i}(j,S_i)]
    \ge
    \EE[\phi_{\mathbf{v}_i}(j,A_i)]
    =
    x_{ij}
    \qquad(i\in H_j).
\]
Also, a customer can choose supplier $j$ only if it belongs to $H_j$, so $B_j\subseteq H_j$.

For a fixed supplier $j$, each filtered assortment $S_i$ depends only on the independently sampled assortment $A_i$ and the fixed threshold sets.
The customer choice draws are also independent across customers.
Thus, the inclusion indicators of $B_j$ are independent, and hence negatively associated, with marginal probabilities $\mathbf{z}_j=(z_{ij})_{i\in\cC}$.
Apply Lemma~\ref{lem:threshold_lower_bound} with reference probabilities $\mathbf{x}_j$, actual probabilities $\mathbf{z}_j$, weights $\mathbf{w}_j$, revenues $\mathbf{r}_j$, and threshold set $H_j$.
We obtain
\[
    \zeta_j
    \le
    \LP\bigl(
        \mathbf{x}_j,
        \widehat{\mathbf{w}}_j,
        \widehat{\mathbf{r}}_j
    \bigr)
    \le
    2\tau_j
    \le
    2\EE[R_{j,k}^-(B_j\cap H_j)]
    =
    2\EE[R_{j,k}^-(B_j)].
\]
The first inequality follows from the supplier constraints in LP~\eqref{eq:lp_standard} and Remark~\ref{remark:main_programs}.
The second and third inequalities follow from Lemma~\ref{lem:threshold_lower_bound}.
The equality uses $B_j\subseteq H_j$.
Summing over suppliers gives
\[
    \LP_{\ST}
    =
    \sum_{j\in\cS}\zeta_j
    \le
    2\sum_{j\in\cS}\EE[R_{j,k}^-(B_j)]
    =
    2\ALG_{\ST}.
\]

Finally, LP~\eqref{eq:lp_standard} has polynomial size, Lemma~\ref{lem:standard_choice_polytope} provides a polynomial-time assortment decomposition, and each threshold can be computed by sorting the distorted revenues.
The remaining sampling and filtering steps also take polynomial time.
Hence Algorithm~\ref{alg:standard_tsao} runs in polynomial time.

\end{proof}

\subsection{Fully static two-sided assortment optimization}
\label{subsection:fully_static}

In fully static TSAO, the platform fixes assortments for all
customers and suppliers before any agent makes a choice. Each
agent then selects an offered agent or the exit option, and a
match forms when two agents select each other.
This model was studied by \citet{ahmed2022parameterized}, who
obtain parameterized approximation guarantees, and
\citet{housni2024two}, who give a $0.067$-approximation for
uniform match revenues under cardinality constraints on both
sides of the market. Applying the framework of
Section~\ref{section:main_result}, we improve the latter
guarantee to $1/4$ while allowing arbitrary pair-dependent
revenues.

\paragraph{Constrained fully static TSAO.}
There is a set $\cC$ of customers and a set $\cS$ of suppliers.
Each customer $i\in\cC$ follows an MNL choice model with
preference weights $(v_{ij})_{j\in\cS}$, and each supplier
$j\in\cS$ follows an MNL choice model with preference weights
$(w_{ji})_{i\in\cC}$. All preference weights are nonnegative,
and each agent's exit option has weight one. A match between
customer $i$ and supplier $j$ generates revenue $r_{ij}\ge0$.
The platform offers each customer $i$ an assortment
$S_i\subseteq\cS$ and each supplier $j$ an assortment
$C_j\subseteq\cC$, subject to
$
    |S_i|\le k \quad \forall i\in\cC
$ and 
$   |C_j|\le k \quad \forall j\in\cS.$
The platform may randomize over these assortments, but all
assortments must be fixed before any agent makes a choice.
Conditional on the chosen assortments, agents choose
independently according to their respective MNL models.
Customer $i$ and supplier $j$ match if and only if they select
each other.
The objective is to maximize the expected total revenue from
matches. Let $\OPT_{\FS}$ denote the maximum expected revenue
over all fully static policies satisfying the cardinality
constraints.

\begin{theorem}\label{thm:fully_static}
For constrained fully static TSAO with pair-dependent
revenues, there is a randomized polynomial-time fully static
policy whose expected revenue is at least
$\frac14\OPT_{\FS}$.
\end{theorem}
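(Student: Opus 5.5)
The plan is to sandwich $\OPT_{\FS}$ between the relaxation $\LP_{\ST}$ of LP~\eqref{eq:lp_standard} and four times the revenue of a fully static policy built from Algorithm~\ref{alg:standard_tsao}. The loss splits as $1/2$ from the supplier-side threshold analysis (Lemma~\ref{lem:threshold_lower_bound}) and another $1/2$ from fixing supplier assortments before the backlogs are realized.

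\emph{Step 1: upper bound.} I would first show $\OPT_{\FS}\le\OPT_{\AC}\le\LP_{\ST}$, where the second inequality is the one established in the proof of Theorem~\ref{thm:standard_tsao}. Given any fully static policy with assortments $(S_i)$ and $(C_j)$, consider the standard policy that uses the same $S_i$ and displays $T_j=B_j\cap C_j$ to supplier $j$; this is a feasible customized display since $|T_j|\le|C_j|\le k$. Conditional on $B_j$, the fully static revenue from supplier $j$ is
\[
    \frac{\sum_{i\in C_j\cap B_j}r_{ij}w_{ji}}{1+W_j(C_j)}\le R_j(B_j\cap C_j),
\]
because $W_j(C_j)\ge W_j(C_j\cap B_j)$. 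Averaging over the randomization gives $\OPT_{\FS}\le\OPT_{\AC}$.

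\emph{Step 2: the policy.} I would solve LP~\eqref{eq:lp_standard} and compute $\tau_j$ and $H_j$ exactly as in Algorithm~\ref{alg:standard_tsao}. Customer assortments are the filtered sets $S_i=\{j\in A_i:i\in H_j\}$, so the inclusion indicators of each backlog $B_j\subseteq H_j$ are independent with marginals $z_{ij}\ge x_{ij}$ on $H_j$. For suppliers, I would fix in advance a set $C_j\subseteq H_j$ with $|C_j|\le k$. Because supplier choices are independent of customer choices, supplier $j$ then earns exactly
\[
    \sum_{i\in C_j}z_{ij}\,r_{ij}\,\frac{w_{ji}}{1+W_j(C_j)}.
\]
This is a deterministic cardinality-constrained MNL revenue with effective revenues $z_{ij}r_{ij}$. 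The $z_{ij}$ can be computed exactly, or bounded below by $x_{ij}$, so $C_j$ can be chosen by a polynomial-time cardinality-constrained MNL algorithm, for instance the LP of \citet{davis2013assortment}. By Step~1 and the first inequality of Lemma~\ref{lem:threshold_lower_bound}, it then suffices to show that this optimum is at least $\tau_j/2\ge\LP(\mathbf{x}_j,\widehat{\mathbf{w}}_j,\widehat{\mathbf{r}}_j)/4\ge\zeta_j/4$, and to sum over suppliers.

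\emph{Step 3, the main obstacle: $\max_{C}\sum_{i\in C}z_{ij}r_{ij}w_{ji}/(1+W_j(C))\ge\tau_j/2$.} A first idea is a random $C_j=T_j(B'_j)$, with $B'_j$ an independent simulated copy of the backlog. This fails: compared with the analysis of Lemma~\ref{lem:threshold_lower_bound}, it loses a factor of order $z_{ij}$ rather than a constant. I would therefore argue directly through the LP dual at threshold $\tau_j$, mimicking the proof of Lemma~\ref{lem:threshold_lower_bound} for a single deterministic set.

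Let $H'_j=\{i\in H_j:\widehat r_{ij}\ge 2\tau_j\}$ be the high-revenue part of $H_j$. I would compare two candidates: the best single customer, and a set consisting of the $k$ customers of $H'_j$ with the largest $z_{ij}w_{ji}$.
\begin{itemize}
    \item If $\sum_{i\in H_j}x_{ij}\frac{\widehat w_{ji}}{1+\widehat w_{ji}}$ is small, say at most $1$, the first candidate should suffice. A single customer $i$ earns $z_{ij}r_{ij}w_{ji}/(1+w_{ji})\ge x_{ij}\frac{\widehat w_{ji}}{1+\widehat w_{ji}}\widehat r_{ij}$, up to the distortion factor, which I would check via $\widehat r\,\widehat w/(1+\widehat w)\le r\,w/(1+w)$ as in \eqref{eq:distorted_revenue}. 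An averaging over $i$ using \eqref{eq:tau} should then give at least $\tau_j/2$.
    \item Otherwise, I would use the bound $(1+W_j(C))^{-1}\ge(1+W_j(C)\,z\text{-weighted})^{-1}$ and the exit-probability comparison from the proof of Lemma~\ref{lem:threshold_lower_bound} to get revenue at least $\tau_j(1-q_0)+q_0(\cdot)$.
\end{itemize}
If this case split does not close at $1/2$, my fallback is a second threshold computed from the $z$-marginals. The key inequality would then be obtained by applying Lemma~\ref{lem:threshold_lower_bound} to the supplier's own choice, with the customer selections $\one\{i\text{ chose }j\}$ playing the role of independent availability, and taking $C_j$ to be the threshold set truncated to $k$ elements.
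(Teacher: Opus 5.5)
\textbf{There is a genuine gap: your upper bound passes through a relaxation that is too weak for the fully static model, so the inequality you need in Step~3 is false.} Step~1 is correct, but $\OPT_{\AC}$, and hence $\LP_{\ST}$, can exceed $\OPT_{\FS}$ by an unbounded factor. So no fully static policy can earn $\LP_{\ST}/4$.

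For a counterexample, take one supplier $j$ and $n$ customers with $v_{ij}=1/n$, $w_{ji}=M$, $r_{ij}=1$. In the fully static model each customer chooses $j$ with probability at most $1/(n+1)$. Hence every supplier assortment $C$ earns at most $\frac{1}{n+1}\cdot\frac{|C|M}{1+|C|M}<\frac1{n+1}$. In standard TSAO, offer $\{j\}$ to every customer and show the supplier one customer from a nonempty backlog. This earns $\frac{M}{1+M}\bigl(1-(\frac{n}{n+1})^n\bigr)\ge\frac{M}{2(1+M)}$. Since $\LP_{\ST}=\zeta_j\le2\tau_j$, your Step~3 would give a fixed $C$ with revenue at least $\tau_j/2\ge\LP_{\ST}/4\ge M/(8(1+M))$. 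That exceeds $1/(n+1)$ once $n$ and $M$ are moderately large.

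The cause is structural. A supplier assortment fixed in advance pays the full denominator $1+W_j(C_j)$ even for customers who did not choose $j$. So the loss of order $z_{ij}$ you observed for the simulated-backlog idea is intrinsic to the model, not an artifact of that construction. The individual pieces of Step~3 fail for the same reason:
\begin{itemize}
\item In your first bullet, the best single customer is guaranteed only the average term, i.e.\ a $1/|H_j|$ fraction of $\tau_j$. In the example every single customer earns about $1/(n+1)$.
\item Your fallback lower-bounds $\EE[R_{j,k}^-(B_j\cap H_j)]$. That is the revenue from showing the supplier only the customers who chose it, which cannot be implemented when $C_j$ must be fixed before any choice.
\end{itemize}

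What is missing is a relaxation that charges both agents' choice probabilities to each match. The paper uses LP~\eqref{eq:lp_fully_static}. For an optimal deterministic fully static policy it sets $x_{ij}=p_{ij}q_{ij}/(a_{ij}b_{ij})$, where $p_{ij}$ and $q_{ij}$ are the probabilities that $i$ picks $j$ and that $j$ picks $i$. The objective is $\sum r_{ij}a_{ij}b_{ij}x_{ij}$, and the distorted constraints $\sum_j\widehat a_{ij}x_{ij}\le1$ and $\sum_i\widehat b_{ij}x_{ij}\le1$ hold on both sides. These constraints make the total-choice constraint of LP~\eqref{eq:main_primal} redundant, so each single-agent LP value equals the linear objective.

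Lemma~\ref{lem:threshold_lower_bound} is then applied twice:
\begin{itemize}
\item On the customer side, with independent availabilities $A_i\sim\cI(\mathbf{x}_i)$ and revenues $r_{ij}b_{ij}$, it gives $\LP_{\FS}\le2\EE[\sum_{i,j}r_{ij}b_{ij}c_{ij}]$, where $c_{ij}=\phi_{\mathbf{v}_i}(j,S_i)$.
\item On the supplier side, the ``availability'' of customer $i$ is the event $j\in S_i$, not the event that $i$ chose $j$. The platform observes $S_i$ when it fixes $C_j$, before anyone chooses. The revenues are the conditional expectations $\widetilde r_{ij}$ of $r_{ij}c_{ij}$. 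Since the algorithm picks the best $C_j$ for the realized customer assortments, this yields the second factor $1/2$.
\end{itemize}
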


\paragraph{Our solution.}
Our approach applies the threshold guarantee from Section~\ref{section:main_result} on both sides of the market.
We first sample customer assortments independently and use the guarantee to bound their expected contribution, weighted by each supplier's probability of choosing a customer offered alone.
We then choose the best supplier assortments for the sampled customer assortments.
A second application of the threshold guarantee bounds the revenue retained in this step.  The supplier-side threshold
policy is used only in the analysis: for each realization of
the sampled customer assortments, our algorithm chooses
optimal supplier assortments and therefore performs at least
as well as this policy.

Let $\widehat v_{ij}$ and $\widehat w_{ji}$ be the distorted preference weights obtained from $v_{ij}$ and $w_{ji}$, respectively, with cardinality limit $k$, as defined in Section~\ref{subsection:distorted_mnl_choice_model}.
For every pair $(i,j)\in\cC\times\cS$, write
\[
    a_{ij}=\frac{v_{ij}}{1+v_{ij}},
    \qquad
    b_{ij}=\frac{w_{ji}}{1+w_{ji}},
    \qquad
    \widehat a_{ij}=\frac{\widehat v_{ij}}{1+\widehat v_{ij}},
    \qquad
    \widehat b_{ij}=\frac{\widehat w_{ji}}{1+\widehat w_{ji}}.
\]
Here, $a_{ij}$ and $b_{ij}$ are the choice probabilities when the other agent is offered alone; $\widehat a_{ij}$ and $\widehat b_{ij}$ are the corresponding distorted probabilities.
The distortion formulas give
\[
    \begin{aligned}
        \widehat a_{ij}=\frac{v_{ij}+1/k}{1+v_{ij}},
        \quad\widehat b_{ij}=\frac{w_{ji}+1/k}{1+w_{ji}},\quad\widehat a_{ij}\frac{kv_{ij}}{1+kv_{ij}}=a_{ij},\quad\widehat b_{ij}\frac{kw_{ji}}{1+kw_{ji}}&=b_{ij}.
    \end{aligned}
\]
We use the following relaxation:
\begin{align}
    \LP_{\FS}=\max\quad
    &\sum_{i\in\cC}\sum_{j\in\cS}r_{ij}a_{ij}b_{ij}x_{ij}
    \label{eq:lp_fully_static}\\
    \suchthat\quad
    &\sum_{j\in\cS}\widehat a_{ij}x_{ij}\le1
    &&\forall i\in\cC,
    \notag\\
    &\sum_{i\in\cC}\widehat b_{ij}x_{ij}\le1
    &&\forall j\in\cS,
    \notag\\
    &\mathbf{x}\in[0,1]^{\cC\times\cS}.
    \notag
\end{align}
The customer and supplier constraints make the total-choice constraint in LP~\eqref{eq:main_primal} redundant in the two threshold applications below.
LP~\eqref{eq:lp_fully_static} has polynomially many variables and constraints.

For each customer, we sample suppliers independently,
retain those passing the threshold, and truncate the
resulting assortment to size at most $k$.
We perform this sampling independently across customers and use a fixed truncation rule for each customer based only on its own sampled set.
Once these customer assortments are fixed, the expected contribution of a choice of customer $i$ by supplier $j$ is $r_{ij}\phi_{\mathbf{v}_i}(j,S_i)$.
Thus, selecting a supplier assortment is a cardinality-constrained MNL revenue maximization problem.

Algorithm \ref{alg:fully_static_tsao} summarizes the steps of our solution. The details of the proof of Theorem~\ref{thm:fully_static} are deferred to Appendix~\ref{appendix:fully_static_proof}.

\begin{algorithm}[H]
    \caption{Fully static threshold policy}
    \label{alg:fully_static_tsao}
    \begin{algorithmic}[1]
        \State Solve LP~\eqref{eq:lp_fully_static} and obtain an optimal solution $\mathbf{x}$.
        \For{each customer $i\in\cC$}
            \State Compute the threshold $\tau_i\ge0$ satisfying
            \Statex \hspace{\algorithmicindent}$\displaystyle
                \tau_i=\sum_{j\in\cS}x_{ij}\widehat a_{ij}
                \left(\frac{kv_{ij}}{1+kv_{ij}}r_{ij}b_{ij}-\tau_i\right)^+.
            $
            \State Set $\displaystyle H_i=\left\{j\in\cS:\frac{kv_{ij}}{1+kv_{ij}}r_{ij}b_{ij}\ge\tau_i\right\}$.
            \State Independently sample $A_i\sim\cI(\mathbf{x}_i)$.
            \State Let $S_i$ be the first $\min\{|A_i\cap H_i|,k\}$ suppliers in $A_i\cap H_i$ under a fixed order.
        \EndFor
        \For{each supplier $j\in\cS$}
            \State Choose
            \Statex \hspace{\algorithmicindent}$\displaystyle
                C_j\in\argmax_{\substack{C\subseteq\{i\in\cC:j\in S_i\}\\|C|\le k}}
                \sum_{i\in C}r_{ij}
                \frac{v_{ij}}{1+V_i(S_i)}
                \frac{w_{ji}}{1+W_j(C)}.
            $
        \EndFor
        \State Display $S_i$ to every customer $i\in\cC$ and $C_j$ to every supplier $j\in\cS$.
    \end{algorithmic}
\end{algorithm}

\section{MMNL with customization}
\label{section:mmnl}

In assortment optimization under a mixture of multinomial
logit models (MMNL) with customization, the firm selects an
initial assortment before observing the customer's class.
After observing the class, it may remove products from this
assortment to tailor the displayed selection to the customer.
This model was introduced by \citet{el2023joint}.

\paragraph{Constrained MMNL with customization.}
There are $n$ products and $m$ customer classes.
A customer belongs to class $j\in[m]$ with probability
$\nu_j\ge0$, where $\sum_{j\in[m]}\nu_j=1$.
Class $j$ follows an MNL choice model with preference weights
$\mathbf{v}_j=(v_{ji})_{i\in[n]}\in\RR_+^n$ and exit
weight one.
If a class-$j$ customer chooses product $i$, the firm earns
revenue $r_{ij}\ge0$.

Before observing the customer's class, the firm selects an
initial assortment $S\subseteq[n]$ containing at most
$\ell$ products, where $\ell\in\{1,\ldots,n\}$.
After observing the class, it chooses an assortment
$T\subseteq S$ containing at most $k$ products to display,
where $k$ is a positive integer.
Thus, $\ell$ limits the initial assortment, while $k$
limits the assortment displayed to the customer.
The case without a display limit is recovered by taking
$k\ge\ell$.

Write $\mathbf{r}_j=(r_{ij})_{i\in[n]}$ and
$R_j=R_{\mathbf{r}_j,\mathbf{v}_j}$, and let
$R_{j,k}^-$ and $R_{j,k}^+$ denote the corresponding
adversarial-truncation and customization revenue functions
defined in Section~\ref{section:introduction}.
The firm seeks to maximize its expected revenue:
\begin{equation}\label{eq:mmnl}
    \begin{aligned}
        \OPT_{\MMNL}
        &=
        \max_{S\subseteq[n],\;|S|\le\ell}
        \sum_{j\in[m]}\nu_j
        \max_{T\subseteq S,\;|T|\le k}
        R_{\mathbf{r}_j,\mathbf{v}_j}(T)\\
        &=
        \max_{S\subseteq[n],\;|S|\le\ell}
        \sum_{j\in[m]}\nu_j R_{j,k}^+(S).
    \end{aligned}
\end{equation}

When revenues are independent of the customer's class and
there is no display limit, \citet{el2023joint} give an
$\Omega(1/\log m)$-approximation algorithm.
For this setting, \citet{udwani2023submodular} improves the
guarantee to $(1/2-\varepsilon)$ using submodular order functions.
With a display limit and class-independent revenues,
\citet{el2023joint} obtain an
$\Omega(1/\sqrt{m\log m})$-approximation guarantee.

Applying the framework of Section~\ref{section:main_result},
we obtain a $1/2$-approximation for
Problem~\eqref{eq:mmnl}.
This guarantee accommodates cardinality constraints on both
the initial and displayed assortments and allows revenues
to depend on both the product and the customer's class.

\begin{theorem}\label{thm:mmnl}
There is a randomized polynomial-time $1/2$-approximation
algorithm for Problem~\eqref{eq:mmnl}.
\end{theorem}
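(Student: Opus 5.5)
Following the template of Theorem~\ref{thm:standard_tsao}, we treat products as "available" when they lie in the (random) initial assortment, and treat each class $j$ as a single-customer instance with weights $\mathbf{v}_j$, revenues $\mathbf{r}_j$ and display limit $k$. For a randomized initial assortment $S$ with marginals $y_i=\PP(i\in S)$, the constraint $|S|\le\ell$ gives $\sum_i y_i\le\ell$ and $\mathbf{y}\in[0,1]^n$. For each class $j$, the distribution of $S$ lies in $\cP(\mathbf{y})$, so Lemma~\ref{lem:magician_upper_bound} bounds its contribution by $\LP(\mathbf{y},\widehat{\mathbf{v}}_j,\widehat{\mathbf{r}}_j)$. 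Here the hats are the distortions with limit $k$. This suggests the relaxation
\[
    \LP_{\MMNL}=\max\Bigl\{\sum_{j\in[m]}\nu_j\zeta_j:\ \zeta_j\le\theta+\sum_{i\in[n]}y_i\frac{\widehat v_{ji}}{1+\widehat v_{ji}}(\widehat r_{ij}-\theta)^+\ \forall j,\ \forall\theta\in\Theta_j,\ \sum_{i}y_i\le\ell,\ \mathbf{y}\in[0,1]^n\Bigr\},
\]
with $\Theta_j=\{0\}\cup\{\widehat r_{ij}\}$. By Remark~\ref{remark:main_programs}, this LP has polynomial size and satisfies $\OPT_{\MMNL}\le\LP_{\MMNL}$. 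The deterministic optimum is itself a feasible randomized set, so the bound holds.

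For the rounding, we need a random set $S$ with $|S|\le\ell$ always, $\PP(i\in S)=y_i$, and negatively associated inclusion indicators. Dependent (pipage/swap) rounding, e.g.\ Srinivasan's level-set or the Chekuri--Vondr\'ak--Zenklusen scheme, produces exactly such a distribution for the cardinality constraint $\sum_i y_i\le\ell$. We add a dummy slack coordinate if $\sum_i y_i<\ell$ or if it is fractional, and the result is negatively associated; restricting to real products preserves negative association. This is the step that replaces independent sampling. I expect it to be the main technical point: we need to cite a rounding scheme whose output is negatively associated in the strong sense used in Lemma~\ref{lem:threshold_lower_bound}, not merely negatively cylinder-dependent. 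Strongly Rayleigh measures, such as those from randomized swap rounding or conditional Poisson sampling, have this property by Borcea--Br\"and\'en--Liggett.

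The firm then commits to $S$. After observing class $j$, it displays $R_{j,k}^+(S)$, the optimal customization, which is polynomial-time computable for cardinality-constrained MNL. For the analysis, let $\tau_j$ and $H_j$ be the threshold and threshold set of Section~\ref{subsection:threshold_policy} for parameters $(\mathbf{y},\mathbf{v}_j,\mathbf{r}_j)$. Since $S\cap H_j\subseteq S$ and $|T(S\cap H_j)|\le k$, we get $R_{j,k}^+(S)\ge R_{j,k}^-(S\cap H_j)$. Lemma~\ref{lem:threshold_lower_bound}, applied with $\mathbf{z}=\mathbf{x}=\mathbf{y}$ and the negatively associated law of $S$, gives
\[
\EE[R_{j,k}^+(S)]\ge\tau_j\ge\tfrac12\LP(\mathbf{y},\widehat{\mathbf{v}}_j,\widehat{\mathbf{r}}_j)\ge\tfrac12\zeta_j.
\]
Weighting by $\nu_j$ and summing yields $\EE[\text{revenue}]\ge\frac12\LP_{\MMNL}\ge\frac12\OPT_{\MMNL}$. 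The thresholds are used only in the analysis. The algorithm consists of solving the LP, performing the dependent rounding, and customizing optimally per class, all in polynomial time.
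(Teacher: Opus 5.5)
Your argument is the paper's proof: the same LP with the deterministic optimum as a feasible point, a negatively associated exact-marginal rounding to at most $\ell$ products, and Lemma~\ref{lem:threshold_lower_bound} applied to each class with $\mathbf{z}=\mathbf{x}$. Displaying the optimal customization instead of an arbitrary $\min\{|S\cap H_j|,k\}$-subset of $S\cap H_j$, and padding with dummy coordinates instead of raising the LP solution to $\sum_i x_i=\ell$, are harmless variations. The one step to tighten is the rounding citation: the paper uses the linear, fixed-order version of Srinivasan's sampling (pivotal sampling), whose negative association is due to Dubhashi, Jonasson, and Ranjan. By contrast, Srinivasan's process with arbitrary pairings need not be negatively associated, and swap rounding is, as far as I know, only established as negatively cylinder dependent, not strongly Rayleigh. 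Conditional Poisson sampling is strongly Rayleigh, but it does not obviously reproduce the marginals exactly in polynomial time.
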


The result also extends to class-dependent display limits
$k_j$; we use a common limit $k$ to simplify the exposition.

\paragraph{Our solution.}
For each class $j$, view membership in the initial assortment
as product availability and apply the framework of
Section~\ref{section:main_result} with preference weights
$\mathbf{v}_j$, revenues $\mathbf{r}_j$, and display limit $k$.
The classes share the same initial assortment, so we use a
common vector of marginal inclusion probabilities.
We optimize these probabilities through an LP and then sample
an initial assortment of size $\ell$ whose inclusion
indicators are negatively associated.
This allows us to apply Lemma~\ref{lem:threshold_lower_bound}
separately to each class.

For each class $j$, obtain the distorted weights
$\widehat{\mathbf{v}}_j$ and revenues
$\widehat{\mathbf{r}}_j$ from the definitions in
Section~\ref{subsection:distorted_mnl_choice_model}, using
display limit $k$, and set
$\Theta_j=\{0\}\cup\{\widehat r_{ij}:i\in[n]\}$.
Consider the following LP:
\begin{align}
    \LP_{\MMNL}=\max\quad
    &\sum_{j\in[m]}\nu_j\zeta_j
      \label{eq:lp_mmnl}\\
    \suchthat\quad
    &\zeta_j\le\theta+
      \sum_{i\in[n]}x_i
      \frac{\widehat v_{ji}}{1+\widehat v_{ji}}
      (\widehat r_{ij}-\theta)^+
      &&\forall j\in[m],\ \forall\theta\in\Theta_j,
      \notag\\
    &\sum_{i\in[n]}x_i\le\ell,
      \notag\\
    &\mathbf{x}\in[0,1]^n,
      \qquad\boldsymbol{\zeta}\in\RR_+^m.
      \notag
\end{align}
Here, $x_i$ represents the probability that product $i$
belongs to the initial assortment.
By Remark~\ref{remark:main_programs}, the constraints
indexed by $\Theta_j$ are equivalent to
\[
    \zeta_j\le
    \LP(\mathbf{x},\widehat{\mathbf{v}}_j,
                     \widehat{\mathbf{r}}_j).
\]
Thus, the objective maximizes a weighted sum of the
single-customer LP bounds from Section~\ref{section:main_result}.
We show below that $\LP_{\MMNL}$ is an upper bound on
$\OPT_{\MMNL}$.

LP~\eqref{eq:lp_mmnl} has polynomially many variables
and constraints.
Moreover, it has an optimal solution satisfying
$\sum_{i\in[n]}x_i=\ell$:
increasing the coordinates of $\mathbf{x}$ only relaxes
the class constraints, and $\ell\le n$.

To implement these marginal probabilities while respecting
the initial assortment limit, we use the linear version of
Srinivasan's sampling process~\citep{srinivasan2001distributions},
also known as pivotal sampling~\citep{deville1998unequal},
with the fixed product order $1,\ldots,n$.
Given $\mathbf{x}\in[0,1]^n$ with
$\sum_{i\in[n]}x_i=\ell$, this process samples an
assortment $S$ in polynomial time with distribution $\alpha$
satisfying
\[
    \PP_{S\sim\alpha}(i\in S)=x_i
    \quad(i\in[n]),
    \qquad
    |S|=\ell
    \quad\text{almost surely}.
\]
For this fixed linear order, the inclusion indicators are
negatively associated~\citep{dubhashi2007positive}.
These properties ensure that the initial assortment is
feasible and that Lemma~\ref{lem:threshold_lower_bound}
applies to each class.
Algorithm~\ref{alg:mmnl_customization} summarizes our solution.

\begin{algorithm}[H]
    \caption{Threshold policy for MMNL with customization}
    \label{alg:mmnl_customization}
    \begin{algorithmic}[1]
        \State Solve LP~\eqref{eq:lp_mmnl} and obtain an optimal
        solution $(\mathbf{x},\boldsymbol{\zeta})$ satisfying
        $\sum_{i\in[n]}x_i=\ell$.
        \For{each customer class $j\in[m]$}
            \State Compute the threshold $\tau_j\ge0$ satisfying
            \Statex \hspace{\algorithmicindent}
            $\displaystyle
                \tau_j=\sum_{i\in[n]}x_i
                \frac{\widehat v_{ji}}{1+\widehat v_{ji}}
                (\widehat r_{ij}-\tau_j)^+.
            $
            \State Set
            $H_j=\{i\in[n]:\widehat r_{ij}\ge\tau_j\}$.
        \EndFor
        \State Apply linear Srinivasan sampling with product
        order $1,\ldots,n$ and input $\mathbf{x}$ to draw
        an initial assortment $S$ of size $\ell$.
        \State Observe the customer's class $j\in[m]$.
        \State Display any subset of $S\cap H_j$ of size
        $\min\{|S\cap H_j|,k\}$.
    \end{algorithmic}
\end{algorithm}

\begin{proof}[Proof of Theorem~\ref{thm:mmnl}]
Let $\OPT_{\MMNL}$ be the optimum in
Problem~\eqref{eq:mmnl}, and let $\ALG_{\MMNL}$ be the
expected revenue of Algorithm~\ref{alg:mmnl_customization}.
We prove
\[
    \OPT_{\MMNL}\le\LP_{\MMNL}\le2\ALG_{\MMNL}.
\]

\emph{Upper bound on the customized optimum.}
Let $S^*$ be an optimal initial assortment for
Problem~\eqref{eq:mmnl}.
For each class $j$, deterministic availability of $S^*$
is feasible for the magician with marginal probability
bounds $\one_{S^*}$.
The magician can therefore obtain revenue
$R_{j,k}^+(S^*)$.
Lemma~\ref{lem:magician_upper_bound} gives
\[
    R_{j,k}^+(S^*)
    \le
    \LP(\one_{S^*},\widehat{\mathbf{v}}_j,
                      \widehat{\mathbf{r}}_j).
\]
By Remark~\ref{remark:main_programs}, this inequality
implies that
\[
    \mathbf{x}=\one_{S^*},
    \qquad
    \zeta_j=R_{j,k}^+(S^*)
    \quad(j\in[m])
\]
satisfy all class constraints in LP~\eqref{eq:lp_mmnl}.
The initial assortment constraint also holds because
$|S^*|\le\ell$.
This feasible solution has objective value
$\OPT_{\MMNL}$, proving
$\OPT_{\MMNL}\le\LP_{\MMNL}$.

\emph{Performance of the threshold policy.}
Take the optimal solution
$(\mathbf{x},\boldsymbol{\zeta})$ chosen in
Algorithm~\ref{alg:mmnl_customization}.
The sampled initial assortment has size $\ell$, and every
displayed assortment is a subset of it with size at most
$k$, so the policy is feasible.
Let $\alpha$ denote the distribution of the sampled
assortment $S$.
Its inclusion indicators are negatively associated and have
marginal probabilities $\mathbf{x}$.

For each class $j$, apply
Lemma~\ref{lem:threshold_lower_bound} with reference and
actual marginal probabilities both equal to $\mathbf{x}$,
preference weights $\mathbf{v}_j$, revenues
$\mathbf{r}_j$, and threshold set $H_j$.
Together with the class constraints in
LP~\eqref{eq:lp_mmnl}, this gives
\begin{align*}
    \zeta_j
    \le
    \LP(\mathbf{x},\widehat{\mathbf{v}}_j,
                     \widehat{\mathbf{r}}_j) \le 2\tau_j
    \le
    2\EE_{S\sim\alpha}
       \bigl[R_{j,k}^-(S\cap H_j)\bigr].
\end{align*}
Every assortment displayed by
Algorithm~\ref{alg:mmnl_customization} has size
$\min\{|S\cap H_j|,k\}$ and is contained in $S\cap H_j$.
Its expected revenue from a class-$j$ customer is therefore
at least $R_{j,k}^-(S\cap H_j)$.

The customer's class is independent of the sampling of $S$.
Weighting the class bounds by $\nu_j$ and summing yields
\[
    \LP_{\MMNL}
    =\sum_{j\in[m]}\nu_j\zeta_j
    \le
    2\EE_{S\sim\alpha}
      \left[
          \sum_{j\in[m]}\nu_jR_{j,k}^-(S\cap H_j)
      \right]
    \le2\ALG_{\MMNL}.
\]
Combining the two bounds proves the approximation guarantee.

Finally, LP~\eqref{eq:lp_mmnl} has polynomial size,
each threshold can be computed by sorting the distorted
revenues, and the sampling and display steps take
polynomial time.
Hence, Algorithm~\ref{alg:mmnl_customization} runs in
polynomial time.
\end{proof}

\section{Non-exclusive ride-hailing notifications}
\label{section:ride_hailing}

In non-exclusive ride-hailing notifications, a platform may
notify several drivers about the same rider before observing
their acceptances.
The notification set selection problem was introduced by
\citet{ekbatani2026non} to model such decisions at ride-hailing
platforms.

\paragraph{Notification set selection.}
The platform has a set of drivers $\cD$ and a set of riders
$\cR$.
It fixes all notifications before observing any acceptances
and may notify each driver about at most one rider.
When notified about rider $j\in\cR$, driver $i\in\cD$
accepts with probability $\rho_{ij}\in[0,1]$.
Conditional on the notifications, acceptances are independent
across drivers.
A match between driver $i$ and rider $j$ generates welfare
$r_{ij}\ge0$, and the platform seeks to maximize the expected
total welfare.

When several drivers accept a notification for the same rider,
a rule determines which driver is matched with that rider.
Under \emph{First Acceptance}, a driver is selected uniformly
from the accepting drivers.
Under \emph{Best Acceptance}, the platform selects an accepting
driver with the highest match welfare.
We additionally consider \emph{Worst Acceptance}, under which
an adversary selects an accepting driver with the lowest match
welfare.
If no driver accepts, the rider remains unmatched and
contributes zero welfare.
For every realization of the acceptances, the welfare under
Worst Acceptance is at most that under First Acceptance,
which is at most that under Best Acceptance.

\citet{ekbatani2026non} obtain a $1/4$-approximation guarantee
under First Acceptance, up to a multiplicative factor
$(1-\delta)$ and an additive loss $\mathcal{O}(\varepsilon)$.
Their algorithm runs in polynomial time for any fixed
$\delta>0$.
Applying the framework of Section~\ref{section:main_result},
we obtain a $1/2$ guarantee under Worst Acceptance against
the optimal expected welfare under Best Acceptance.
The same guarantee therefore also holds under First Acceptance.

\begin{theorem}\label{thm:notifications}
There is a randomized polynomial-time notification policy
whose expected welfare under Worst Acceptance is at least
half of the optimal expected welfare under Best Acceptance.
\end{theorem}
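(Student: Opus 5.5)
We reduce notification set selection to constrained standard TSAO (Theorem~\ref{thm:standard_tsao}) under the ranked MNL choice model of Section~\ref{subsection:ranked_mnl}. Drivers play the role of customers and riders play the role of suppliers, with display limit $k=1$ on the driver side. A notification of driver $i$ about rider $j$ corresponds to offering the singleton assortment $\{j\}$ to customer $i$. To reproduce acceptance probability $\rho_{ij}$, we set the MNL weight $v_{ij}=\rho_{ij}/(1-\rho_{ij})$, so that $v_{ij}/(1+v_{ij})=\rho_{ij}$. When $\rho_{ij}=1$, we instead use the ranked model with rank $s_{ij}>0$, which makes the choice probability of the singleton equal to one. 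With $k=1$, the customer choice polytope $\cA_i$ reduces to $\{\mathbf{x}_i\ge0:\sum_j x_{ij}/\rho_{ij}\le1\}$, i.e.\ convex combinations of single notifications, and Lemma~\ref{lem:standard_choice_polytope} decomposes it trivially.

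On the rider side, Best Acceptance (choose the accepting driver of highest welfare) is modelled by a ranked MNL for supplier $j$. Rank drivers by $r_{ij}$, give every driver weight $1$, and give positive rank to all drivers, so the exit option is never chosen when the backlog is nonempty. Then supplier $j$ picks the top-ranked driver in the displayed set, breaking ties by weight. For the benchmark, a supplier display limit $k_{\mathcal S}\ge|\cD|$ makes customization include the option of displaying only the best accepting driver. Hence $R^+_{j}(B)=\max_{i\in B}r_{ij}$, matching Best Acceptance, and adaptivity only enlarges the benchmark. The target is the Worst Acceptance value $\min_{i\in B}r_{ij}$. For this we use adversarial truncation with supplier display limit $1$: $R_{j,1}^-(B)=\min_{i\in B}R_j(\{i\})=\min_{i\in B}r_{ij}$, since a singleton with positive rank is chosen with probability one. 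Customization with limit $1$ likewise gives $\max_{i\in B} r_{ij}$. We therefore take the supplier limit to be $1$ throughout, using the extension of the TSAO result to agent-dependent cardinality constraints.

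Under the ranked model with $s>0$, the distorted parameters are $\widehat w=+\infty$ and $\widehat r_{ij}=r_{ij}$. The LP~\eqref{eq:lp_standard} then has rider constraints $\zeta_j\le\theta+\sum_i x_{ij}(r_{ij}-\theta)^+$ and driver constraints $\sum_j x_{ij}/\rho_{ij}\le1$. Algorithm~\ref{alg:standard_tsao} becomes the following procedure. Solve the LP, and compute thresholds $\tau_j$ and sets $H_j$. Each driver independently samples at most one rider according to $x_{ij}/\rho_{ij}$, and is notified only if $i\in H_j$. The proof of Theorem~\ref{thm:standard_tsao} then gives expected Worst Acceptance welfare at least $\tfrac12\LP_{\ST}\ge\tfrac12$ times the optimal Best Acceptance welfare.

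The main obstacle is justifying that Theorem~\ref{thm:standard_tsao} legitimately applies to ranked MNL. We need three facts: the customer polytope characterization (trivial here), the upper bound via Lemma~\ref{lem:magician_upper_bound}, and the threshold bound via Lemma~\ref{lem:threshold_lower_bound}. The paper asserts the latter two for ranked MNL by a limiting argument. We must also check that the filtering step still weakly increases availability probabilities; this holds because with singletons, filtering either keeps the notification or removes it, so $z_{ij}=x_{ij}$ on $H_j$. The $\rho_{ij}=1$ edge case on the driver side can alternatively be handled by continuity.
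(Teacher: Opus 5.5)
Your proposal is correct and follows the paper's proof essentially step by step. Drivers act as customers, and singleton assortments reproduce $\rho_{ij}$, with positive rank when $\rho_{ij}=1$. Riders act as suppliers whose drivers all have positive rank, so $\widehat r_{ij}=r_{ij}$ and the coefficient $\widehat w/(1+\widehat w)$ equals one. The driver constraint $\sum_j x_{ij}/\rho_{ij}\le 1$ replaces $\cA_i$. Each driver independently samples a singleton, threshold filtering follows, and customization and adversarial truncation at limit one give Best and Worst Acceptance.

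The detour through revenue-based rider ranks, a display limit of at least $|\cD|$, and agent-dependent cardinality constraints is unnecessary. As in the paper, a common limit $k=1$ on both sides, with every driver given the same positive rank for each rider, already suffices.
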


\paragraph{Our solution.}
We reduce the problem to constrained standard TSAO
(Section~\ref{section:standard_tsao}), using the ranked MNL
choice model introduced in
Section~\ref{subsection:ranked_mnl} and cardinality limit
$k=1$ on both sides.
Drivers play the role of customers, and riders play the role
of suppliers.
A driver's choice represents acceptance of a notification,
while a rider's response determines its matched driver.
The cardinality limit on the rider side restricts the final
displayed backlog to one driver; the number of drivers
notified about a rider remains unrestricted.
Under this reduction, Best Acceptance corresponds to
customized supplier responses, whereas Worst Acceptance
corresponds to adversarial truncation.
We apply Algorithm~\ref{alg:standard_tsao} with the ranked MNL
parameters and sampling rule described below.

\begin{proof}[Proof of Theorem~\ref{thm:notifications}]
Identify $\cD$ with the customers and $\cR$ with the
suppliers in standard TSAO, impose cardinality limit
$k=1$ on both sides, and use $r_{ij}$ as the match revenue.

For driver $i$, assign rider $j$ the rank and preference
weight
\[
    s_{ij}=\one\{\rho_{ij}=1\},
    \qquad
    v_{ij}=\frac{\rho_{ij}}{1-\rho_{ij}+s_{ij}}.
\]
Give the driver's exit option rank zero and preference
weight one.
Writing $\mathbf{s}_i=(s_{ij})_{j\in\cR}$ and
$\mathbf{v}_i=(v_{ij})_{j\in\cR}$, the ranked MNL
choice probability from a singleton satisfies
\[
    \phi_{\mathbf{v}_i,\mathbf{s}_i}(j,\{j\})
    =\rho_{ij}.
\]
This identity also holds when $\rho_{ij}=0$ or
$\rho_{ij}=1$.
Thus, displaying a singleton to a driver reproduces
the required notification acceptance probability.
The independent customer choices in standard TSAO reproduce
the independent driver acceptances.

For each rider, give every driver rank one and preference
weight one, and give the exit option rank zero and
preference weight one.
The rider then selects the driver in any displayed singleton
with probability one.
Customization selects an accepting driver with the largest
$r_{ij}$ and therefore implements Best Acceptance.
Adversarial truncation to one driver selects an accepting
driver with the smallest $r_{ij}$ and implements Worst
Acceptance.
Selecting the retained driver uniformly implements
First Acceptance.

The ranked MNL extension ensures that
Lemmas~\ref{lem:magician_upper_bound}
and~\ref{lem:threshold_lower_bound} apply to these choice
models.
To apply the proof of Theorem~\ref{thm:standard_tsao},
it remains to represent and implement the feasible
customer choice probabilities.
For each driver $i$, let $y_{ij}$ denote the probability
of sampling the singleton $\{j\}$.
The feasible choice probabilities have the following
linear representation:
\[
    y_{ij}\ge0
    \quad (i\in\cD,\ j\in\cR),
    \qquad
    \sum_{j\in\cR}y_{ij}\le1
    \quad (i\in\cD),
    \qquad
    x_{ij}=\rho_{ij}y_{ij}
    \quad (i\in\cD,\ j\in\cR).
\]
These constraints replace
$\mathbf{x}_i\in\cA_i$ in LP~\eqref{eq:lp_standard}.
They give an exact representation of the convex hull of
choice vectors generated by feasible notifications.

Given a feasible solution, independently for each driver
$i$, sample the singleton $\{j\}$ with probability $y_{ij}$
and the empty assortment with the remaining probability.
Then apply the threshold filtering step of
Algorithm~\ref{alg:standard_tsao}.
The resulting notifications are fixed before any acceptances
are observed, and each driver is notified about at most
one rider.

The proof of Theorem~\ref{thm:standard_tsao} therefore
applies: the resulting static notification policy has
expected welfare under Worst Acceptance at least half
of the adaptive customized optimum in the reduced instance.
This benchmark is at least the optimal expected welfare
of a static notification policy under Best Acceptance,
which proves the claimed guarantee.
The modified LP has polynomial size, and the threshold
computation and sampling steps take polynomial time.
\end{proof}

\section{Multi-round broadcast matching}
\label{section:broadcast_matching}

In multi-round broadcast matching, a firm offers orders to
drivers over several rounds and observes their responses
before choosing later offers.
The model was introduced by \citet{mohammadimulti}.

\paragraph{Multi-round broadcast model.}
Let $\cO$ be a set of orders, such as deliveries, let $\cD$
be a set of drivers, and let $T\in\NN$ be the number of rounds.
A match between order $i\in\cO$ and driver $j\in\cD$
generates revenue $r_{ij}\ge0$.
When offered order $i$, driver $j$ accepts with probability
$p_{ij}\in[0,1]$.
The acceptance variables are independent across
order--driver pairs.

In each round, each unmatched order may be offered to
several unmatched drivers, while each driver receives
at most one offer.
Each order--driver pair may be offered at most once during
the horizon.
If several drivers accept the same order in one round,
the order is assigned uniformly at random among them.
A matched order and driver leave the system; unmatched
orders and drivers remain available for later rounds.
The firm observes all earlier offers, acceptances, and
matches before choosing the next round's offers.
Let $\OPT_{\BM}$ denote the largest expected revenue of
a randomized adaptive policy.

For this problem, \citet{mohammadimulti} obtain approximation
factors of $(1-e^{-2})/2$ for the single-order setting and
$(1-e^{-3})/3$ for the general multiple-order setting.
Applying the framework of Section~\ref{section:main_result},
we obtain a $1/2$ guarantee for the general setting against
the same adaptive benchmark.

\begin{theorem}\label{thm:broadcast_matching}
For multi-round broadcast matching, there is a randomized
polynomial-time policy whose expected revenue is at least
half of the adaptive optimum $\OPT_{\BM}$.
\end{theorem}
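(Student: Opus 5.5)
The plan is to reduce multi-round broadcast matching to the constrained standard TSAO machinery of Section~\ref{section:standard_tsao}, in the same spirit as the ride-hailing reduction in the proof of Theorem~\ref{thm:notifications}. Drivers play the role of customers and orders the role of suppliers. Each order uses the ranked MNL model of Section~\ref{subsection:ranked_mnl} with cardinality limit $k=1$: every driver has rank one and weight one, so the order keeps exactly one driver from its backlog. Under $k=1$ the distorted weights are infinite, $\widehat r_{ij}=r_{ij}$, and $\LP(\mathbf{x}_i,\widehat{\mathbf w}_i,\widehat{\mathbf r}_i)=\max\{\sum_j p_jr_{ij}: p_j\le x_{ij},\ \sum_jp_j\le1\}$.

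On the driver side, I encode a driver's offer sequence over the $T$ rounds as an ordered list of distinct orders. The driver's ``choice'' is the first order it accepts. Order $i$ in position $t$ is chosen with probability $p_{ij}\prod_{h\text{ before }i}(1-p_{hj})$. This is a ranked MNL (or sequential-acceptance) choice model for which the single-customer framework of Section~\ref{section:main_result} still applies on the order side.

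\emph{Upper bound.} Fix an adaptive policy and let $x_{ij}$ be the probability that driver $j$'s first acceptance is order $i$. For each driver, the vector $(x_{ij})_i$ is an average, over histories, of first-acceptance vectors of offer lists of length at most $T$. It therefore lies in the convex hull $\cA_j$ of such vectors. I expect $\cA_j$ to be described by the polymatroid-type constraints $\sum_{i\in U}x_{ij}\le 1-\prod_{i\in U}(1-p_{ij})$, together with a length-$T$ truncation. It should also admit a polynomial-time decomposition by a greedy ordering argument, analogous to Lemma~\ref{lem:standard_choice_polytope}.

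For a fixed order $i$, its matched driver is a driver that accepted it. I plan to charge each match to the driver's first acceptance. A driver who lost a tie and later accepts another order is then charged to that later order, so the argument will require replacing ``first acceptance'' by ``the acceptance that produced the match'' together with a domination argument. The resulting backlog distribution lies in $\cP(\mathbf{x}_i)$, and Lemma~\ref{lem:magician_upper_bound} bounds order $i$'s contribution by $\LP(\mathbf{x}_i,\cdot,\cdot)$. This gives an LP analogous to LP~\eqref{eq:lp_standard} with $\OPT_{\BM}\le\LP_{\BM}$.

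\emph{Algorithm and lower bound.} I solve the LP and compute thresholds $\tau_i$ and sets $H_i$ as in Algorithm~\ref{alg:standard_tsao}. Independently for each driver, I sample an offer list from the decomposition of $\mathbf{x}_j$, delete orders $i$ with $j\notin H_i$, and schedule the remaining list over the rounds. Deleting entries only moves the retained ones earlier, which increases their first-acceptance probabilities, so the marginals dominate $\mathbf{x}$ on each $H_i$. Independence across drivers then gives independent, hence negatively associated, inclusion indicators, and Lemma~\ref{lem:threshold_lower_bound} yields order $i$'s expected revenue at least $\tau_i\ge\tfrac12\zeta_i$. Here revenue is measured under adversarial selection of one driver among those whose first acceptance is $i$, and uniform assignment dominates this adversarial selection.

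\emph{Main obstacle.} The real dynamics differ from this idealized TSAO picture in two ways. First, an order is matched in the earliest round in which anyone accepts, not at the end. Second, losing drivers remain in the system and may accept other orders. I would first try to show that the actual matched driver of order $i$ always belongs to the set of drivers whose first (filtered) acceptance is $i$, and that it is chosen from the first-round subset of that set. Timing then acts as a non-adversarial selection rule, which is dominated by adversarial truncation. To make this hold, I would let drivers ignore offers after their first acceptance; this is implementable statically by simply not scheduling later offers once the driver is matched, or by noting that later acceptances only add matches. If this charging fails, the fallback is to restrict the algorithm to a single round per order and show the LP relaxation still captures the adaptive benchmark.
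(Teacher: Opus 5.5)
\textbf{Main gap: the upper bound.} Your argument does not establish $\OPT_{\BM}\le\LP_{\BM}$, and you leave this step open. With $x_{ij}$ defined as the probability that driver $j$'s \emph{first} acceptance is order $i$, the per-order bound $\LP(\mathbf{x}_i,\cdot,\cdot)$ does not control order $i$'s revenue. A driver that accepts $i'$, loses the lottery, and later matches $i$ contributes to $i$'s revenue but not to $x_{ij}$. So the match-probability vector is not dominated by the first-acceptance vector. The acceptance vector does not help either: its entries can sum to more than one, so it is not in the driver hull.

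What you actually need is that $(\PP(i\text{ matched to }j))_{i}$ is itself a mixture of list first-acceptance vectors $a_{\cdot j}(L)$. The paper obtains this by simulating the adaptive policy separately for each driver:
\begin{itemize}
\item When $j$ is offered $i$ in round $t$, first reveal the other drivers' responses and that round's tie-breaking priorities.
\item If $j$ would win $i$ by accepting, record $i$ at position $t$ and continue along the branch where $j$ rejects, \emph{without} revealing $j$'s acceptance variable.
\item Otherwise record $\emptyset$ and reveal $j$'s response as usual.
\end{itemize}
Because pairs are never re-offered, the recorded entries keep fresh independent acceptance variables. Hence $\PP(i\text{ matched to }j\mid L_j=L)=a_{ij}(L)$. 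Setting $\lambda_{jL}=\PP(L_j=L)$ then gives a feasible solution with $\sum_j y_{ij}=\PP(i\text{ matched})\le 1$. This ``record only winnable offers'' idea is what your proposed domination argument is missing.

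Relatedly, your description of the driver polytope is unproven, and the polymatroid inequalities are not enough once $T$ binds. For $T=1$ and $p_{aj}=p_{bj}=1/2$, the hull is $\{x\ge0:x_a+x_b\le1/2\}$, yet $(1/2,1/4)$ satisfies all of those inequalities. The paper avoids needing any explicit description: it keeps list variables $\lambda_{jL}$ and solves the LP through its dual. Separation there sorts orders by $r_{ij}-\beta_i$ and runs a dynamic program over list positions.

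\textbf{Second gap: independence across drivers in the algorithm.} Your lower bound via adversarial truncation with $k=1$ and rank-one drivers is a legitimate substitute for the paper's ranked MNL over driver--round pairs. There $R_1^-(B)=\min_{j\in B}r_{ij}$, and uniform assignment among the earliest acceptors dominates this minimum. However, the route needs an extra mechanism for the independence you claim. When a driver's filtered list reaches an order that is already matched, the driver cannot be offered it. If it simply skips that entry, one of two things breaks. Either its first acceptance depends on other drivers' outcomes, so the sets $B_i$ are not independent. Or, if $B_i$ is defined through the isolated list, the driver can later match $i$ without belonging to $B_i$.

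The paper fixes this by simulating the driver's response with probability $p_{ij}$ in that case. It also deactivates a driver at its first real or simulated acceptance, not only when it is matched. Then every real acceptance is a first acceptance, the earliest first acceptances of $i$ are real, and the negative-association hypothesis of Lemma~\ref{lem:threshold_lower_bound} holds. Your remark that later acceptances ``only add matches'' is also incorrect. A lottery loser who later accepts $i$ enters the uniform draw and can displace a driver of $B_i$.
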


\paragraph{Our solution.}
We apply the framework of Section~\ref{section:main_result}
separately to each order.
For a fixed order, view each driver--round pair as a product,
available when the driver's first acceptance occurs for
that order in that round.
The ranked MNL choice model introduced in
Section~\ref{subsection:ranked_mnl} represents the assignment
rule by giving earlier rounds higher ranks and equal
preference weights to drivers within a round.
We first construct an LP relaxation using distributions
over offer lists for each driver.
We then sample these lists independently and filter their
entries using a revenue threshold for each order.

Let $\cL$ be the set of lists
$L=(i_1,\ldots,i_T)\in(\cO\cup\{\emptyset\})^T$
in which each order appears at most once.
A list specifies the offers to one driver over the $T$ rounds.
An entry $i_t=\emptyset$ means that the driver receives
no offer in round $t$; this entry may be repeated.
Set $p_{\emptyset j}=0$ for every driver $j$.
For a driver $j\in\cD$ and an order $i=i_s$ appearing
in $L$, define
\[
    a_{ij}(L)
    =p_{ij}\prod_{h<s}(1-p_{i_hj}),
\]
and set $a_{ij}(L)=0$ when $i$ does not appear in $L$.
This is the probability that $i$ is the first order
accepted by driver $j$ when it considers the list in order
and stops after its first acceptance.

We use one variable $\lambda_{jL}$ for the probability
of assigning list $L$ to driver $j$:
\begin{align}
    \LP_{\BM}=\max\quad
    &\sum_{j\in\cD}\sum_{L\in\cL}
      \lambda_{jL}\sum_{i\in\cO}r_{ij}a_{ij}(L)
      \label{eq:broadcast_list_lp}\\
    \suchthat\quad
    &\sum_{j\in\cD}\sum_{L\in\cL}
      \lambda_{jL}a_{ij}(L)\le1
      &&\forall i\in\cO,
      \label{eq:broadcast_order_constraint}\\
    &\sum_{L\in\cL}\lambda_{jL}=1
      &&\forall j\in\cD,
      \label{eq:broadcast_driver_distribution}\\
    &\lambda_{jL}\ge0
      &&\forall j\in\cD,\ \forall L\in\cL.
      \notag
\end{align}
For each order,
Constraint~\eqref{eq:broadcast_order_constraint} limits
the sum of its marginal first-acceptance probabilities
to one.
Constraint~\eqref{eq:broadcast_driver_distribution}
specifies a probability distribution over lists for
each driver.
For every feasible solution $\boldsymbol{\lambda}$, write
\[
    y_{ij}
    =\sum_{L\in\cL}\lambda_{jL}a_{ij}(L)
\]
for the marginal probability that driver $j$ first accepts
order $i$ under its assigned list distribution.

The following lemma establishes both the validity and
tractability of this relaxation.

\begin{lemma}\label{lem:broadcast_list_lp}
The adaptive optimum satisfies
$\OPT_{\BM}\le\LP_{\BM}$.
Moreover, an optimal solution to
LP~\eqref{eq:broadcast_list_lp} with polynomially many
nonzero coefficients can be computed in polynomial time.
\end{lemma}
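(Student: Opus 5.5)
The lemma has two parts: validity of the relaxation, and polynomial-time solvability. I treat them separately.

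\emph{Validity.} Fix an optimal randomized adaptive policy. For each driver $j$, I will construct a distribution over lists $L\in\cL$ such that $y_{ij}$ dominates the probability that $j$ is matched to order $i$, and the revenue identity holds with equality or as an upper bound. The natural coupling is as follows. Simulate the policy and, for each driver $j$, record the sequence of orders offered to $j$ across rounds, with $\emptyset$ in rounds without an offer, up to and including the first round in which $j$ accepts. Pad the sequence with $\emptyset$ entries so its length is $T$. Each order appears at most once, since pairs are offered at most once, so the record lies in $\cL$.

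The subtlety is that the policy's offers depend on $j$'s own acceptances, and a driver that was offered order $i$ and accepted might not be matched because of a tie. To handle this, I will use independent acceptance coins: pre-sample $\xi_{ij}\sim\mathrm{Bernoulli}(p_{ij})$ for all pairs. Let $\lambda_{jL}$ be the probability that $j$'s offer sequence equals $L$, where the sequence is truncated after $j$'s first accepted offer and otherwise continued with $\emptyset$. Offers made to $j$ before $j$'s first acceptance depend only on coins other than $j$'s and on $j$'s rejections. I then expect to show that the probability that $j$'s first acceptance is order $i$ equals $\sum_L\lambda_{jL}a_{ij}(L)$. For this I define $\lambda$ via a "virtual list": the list of orders the policy would offer $j$ if $j$ rejected everything. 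This list is a function of coins not involving $j$, together with the policy's randomness, and is therefore independent of $j$'s coins. The probability that $j$ first accepts $i$ is then exactly $\EE[a_{ij}(L)]$.

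This virtual list is only well defined if rejections by $j$ are forced in the simulation. I will run a modified simulation in which $j$ "rejects"; this simulation agrees with the real one up to $j$'s first acceptance, which is all that matters. A match $(i,j)$ requires that $j$ first accepts $i$, so the revenue is at most $\sum_{j,L}\lambda_{jL}\sum_i r_{ij}a_{ij}(L)$. Constraint~\eqref{eq:broadcast_order_constraint} holds because at most one driver first-accepts order $i$: after the first acceptance round, order $i$ leaves the system (it is matched to someone), so the events "driver $j$ first accepts $i$" are disjoint across $j$. This holds exactly when acceptance of order $i$ by $j$ in a round where $i$ is still unmatched always yields a match of $i$ with some driver. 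Hence $\OPT_{\BM}\le\LP_{\BM}$. I expect this coupling argument, together with the disjointness claim, to be the main obstacle.

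\emph{Tractability.} The LP has exponentially many list variables but only $|\cO|+|\cD|$ constraints. I will solve its dual,
\[
\min \sum_i \mu_i+\sum_j \eta_j \quad\text{s.t.}\quad \eta_j\ge\sum_i (r_{ij}-\mu_i)a_{ij}(L)\ \ \forall j,L,\qquad \mu\ge0,
\]
by the ellipsoid method. The separation oracle must, for a fixed $j$ and weights $c_i=r_{ij}-\mu_i$, maximize $\sum_s c_{i_s}p_{i_sj}\prod_{h<s}(1-p_{i_hj})$ over lists of at most $T$ distinct orders. Orders with $c_i\le0$ can be replaced by $\emptyset$, and $\emptyset$ entries only waste slots. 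The problem is therefore choosing an ordered subset of at most $T$ orders from the remaining positive-weight ones. An exchange argument shows that the optimal order is by decreasing $c_i$, independent of $p$. Given that fixed order, a DP over (position in the sorted order, number of slots used) solves the subset selection in polynomial time. The ellipsoid method then yields polynomially many violated constraints; restricting the primal to those list variables and solving gives an optimal primal solution with polynomially many nonzeros.
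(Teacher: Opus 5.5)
Your tractability half is correct and matches the paper: the same dual with $\beta_i\ge0$, ordering by $r_{ij}-\beta_i$ via an adjacent swap, a DP over sorted orders and remaining slots, then ellipsoid plus optimization--separation. The validity half has a genuine gap. Your ``virtual list'' records every order the policy would offer $j$ if $j$ rejected everything. The resulting $\boldsymbol{\lambda}$ is in general neither feasible nor has objective at least the policy's revenue.

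Feasibility fails because the events ``$j$ first accepts $i$'' are not disjoint across drivers. An order can be broadcast to several drivers in the same round, and all of them may accept before it leaves. With $T=1$ and order $i$ offered to two drivers with acceptance probability one, your lists give $\sum_j y_{ij}=2$, violating Constraint~\eqref{eq:broadcast_order_constraint}.

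The revenue bound fails because your claim that a match $(i,j)$ requires $j$'s first acceptance to be $i$ is false for the benchmark. A driver who accepts but loses the lottery is unmatched and stays in the system; only Algorithm~\ref{alg:broadcast_matching} deactivates drivers at their first acceptance. An adaptive policy may therefore later offer it another order. For example, offer $i_1$ to $j$ and $j'$ in round 1 with acceptance probability one and zero revenue. In round 2, offer $i_2$ to $j$ if $j$ lost, with $p_{i_2j}=1$ and $r_{i_2j}=1$. The policy earns $1/2$, but $j$'s virtual list is $(i_1,i_2)$, so $a_{i_2j}(L)=0$ and your objective is $0$.

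The missing idea is to keep in $j$'s list only the offers that $j$ would actually win. The paper implements the uniform assignment with independent per-round priorities. Whenever $j$ is offered $i$ in round $t$, it first reveals the other drivers' acceptances and the priorities for that round.
\begin{itemize}
\item If $j$ would win $i$ upon accepting, it records $i$ in position $t$ and continues along the history in which $j$ rejects, without revealing $j$'s acceptance variable for $(i,j)$.
\item Otherwise, it records $\emptyset$, reveals $j$'s true response, and continues along the real history; this handles lost lotteries.
\end{itemize}
Since pairs are never offered twice, the acceptance variables of the retained orders stay independent of $L_j$. The real and simulated executions coincide until $j$ accepts its first retained entry, at which point $j$ wins that order. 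Hence $\PP(i\text{ is matched to }j\mid L_j=L)=a_{ij}(L)$ exactly. It follows that $\sum_j y_{ij}=\PP(i\text{ is matched})\le1$ and that the objective equals the policy's revenue.
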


For the upper bound, we represent each driver's matching
probabilities under an adaptive policy by a distribution
over offer lists.
For tractability, separation in the dual reduces to
optimizing a single driver's list, which can be solved
by dynamic programming.
The proof is deferred to
Appendix~\ref{appendix:broadcast_list_lp}.

Given an optimal LP solution, compute for each order $i$
the threshold $\tau_i\ge0$ satisfying
\begin{equation}\label{eq:broadcast_threshold}
    \tau_i
    =\sum_{j\in\cD}y_{ij}(r_{ij}-\tau_i)^+.
\end{equation}
Independently sample a list for each driver, and replace
each entry $i$ with $r_{ij}<\tau_i$ by $\emptyset$.
This filtering preserves the round of every retained entry.

A driver becomes inactive for future offers at its first
acceptance, even if it loses the assignment lottery.
When a retained entry corresponds to an order that has
already matched, we simulate the driver's response using
acceptance probability $p_{ij}$.
A simulated acceptance also makes the driver inactive,
but creates no match.
These simulated responses make each driver's first
acceptance follow the same law as if it considered its
filtered list in isolation.
Algorithm~\ref{alg:broadcast_matching} summarizes the policy.

\begin{algorithm}[!htbp]
    \caption{Threshold policy for multi-round broadcast matching}
    \label{alg:broadcast_matching}
    \begin{algorithmic}[1]
        \State Solve LP~\eqref{eq:broadcast_list_lp} and obtain
        an optimal solution $\boldsymbol{\lambda}$ with
        polynomially many nonzero coefficients.
        \State Set
        $y_{ij}=\sum_{L\in\cL}\lambda_{jL}a_{ij}(L)$
        for every $i\in\cO$ and $j\in\cD$.
        \State For each order $i\in\cO$, compute the threshold
        $\tau_i$ satisfying
        Equation~\eqref{eq:broadcast_threshold}.
        \For{each driver $j\in\cD$}
            \State Independently sample $L_j\in\cL$ with
            probabilities $(\lambda_{jL})_{L\in\cL}$.
            \State Form $L_j'$ by replacing each entry $i$
            in $L_j$ with $r_{ij}<\tau_i$ by $\emptyset$.
        \EndFor
        \State Initialize the active drivers
        $\cD_{\mathrm{rem}}\gets\cD$ and mark every order
        unmatched.
        \For{$t=1,\ldots,T$}
            \For{each driver $j$ active at the start of round $t$}
                \State Let $i$ be the entry in position $t$
                of $L_j'$.
                \If{$i\ne\emptyset$}
                    \If{$i$ is unmatched}
                        \State Offer $i$ to $j$ and observe
                        the driver's response.
                    \Else
                        \State Independently simulate
                        $j$'s response to $i$, with acceptance
                        probability $p_{ij}$.
                    \EndIf
                    \If{$j$ accepts the real or simulated offer}
                        \State Remove $j$ from
                        $\cD_{\mathrm{rem}}$.
                    \EndIf
                \EndIf
            \EndFor
            \State Assign each unmatched order with at least
            one real acceptance in round $t$ uniformly among
            the drivers who accepted it.
        \EndFor
    \end{algorithmic}
\end{algorithm}

\begin{proof}[Proof of Theorem~\ref{thm:broadcast_matching}]
Let $\ALG_{\BM}$ denote the expected revenue of
Algorithm~\ref{alg:broadcast_matching}.
By Lemma~\ref{lem:broadcast_list_lp},
$\OPT_{\BM}\le\LP_{\BM}$.
It remains to prove $\LP_{\BM}\le2\ALG_{\BM}$.

The algorithm's real offers respect the round and pair
constraints.
Making a driver inactive only suppresses future offers,
so it preserves feasibility even when that driver has
not matched.
Only real acceptances generate matches, and every order
is assigned at most once.
Thus, the algorithm defines a feasible policy.

For each order $i$,
Constraint~\eqref{eq:broadcast_order_constraint} and
Equation~\eqref{eq:broadcast_threshold} give
\begin{align*}
    \sum_{j\in\cD}r_{ij}y_{ij}
    \le
    \tau_i\sum_{j\in\cD}y_{ij}
    +\sum_{j\in\cD}y_{ij}(r_{ij}-\tau_i)^+ \le
    \tau_i+\sum_{j\in\cD}y_{ij}(r_{ij}-\tau_i)^+
    =2\tau_i.
\end{align*}
Summing over orders yields
\begin{equation}\label{eq:broadcast_global_upper}
    \LP_{\BM}\le2\sum_{i\in\cO}\tau_i.
\end{equation}

We now show that the expected revenue from each order
$i$ is at least $\tau_i$.
For each driver $j$ and position $t$, define
\[
    x_{ijt}
    =\sum_{\substack{L\in\cL:\\ i_t=i}}
      \lambda_{jL}p_{ij}\prod_{s<t}(1-p_{i_sj}).
\]
This is the probability that $i$ is the first order
accepted by driver $j$ and appears in position $t$
of its unfiltered random list.
Since $\sum_{t=1}^T x_{ijt}=y_{ij}$,
Equation~\eqref{eq:broadcast_threshold} becomes
\begin{equation}\label{eq:broadcast_tau_copies}
    \tau_i
    =\sum_{j\in\cD}\sum_{t=1}^T
      x_{ijt}(r_{ij}-\tau_i)^+.
\end{equation}

Let $Z_{ijt}$ indicate that $i$ occupies position $t$
of the filtered list $L_j'$ and that driver $j$'s first
acceptance, real or simulated, occurs there.
Write $z_{ijt}=\PP(Z_{ijt}=1)$.
For a retained entry, filtering only removes earlier
competing offers.
Consequently,
$
    z_{ijt}\ge x_{ijt}
    \text{ whenever }r_{ij}\ge\tau_i.
$

Real and simulated responses can be coupled to independent
Bernoulli variables for the order--driver pairs.
Because a driver becomes inactive at its first acceptance
regardless of the assignment outcome, its first acceptance
depends only on its own sampled list and response variables.
The families $(Z_{ijt})_{i\in\cO,t\in[T]}$ are therefore
independent across drivers.
Within each such family, at most one indicator equals one.
Such a family is negatively associated, and negative
association is preserved under independent concatenation
and restriction to a subfamily~\citep{dubhashi1996balls}.
Thus, for each order $i$, the indicators
$(Z_{ijt})_{j\in\cD,t\in[T]}$ are negatively associated.

Fix an order $i$ and consider the ranked MNL model of
Section~\ref{subsection:ranked_mnl} with products
$(j,t)\in\cD\times[T]$.
Product $(j,t)$ has revenue $r_{ij}$, rank $T+1-t$,
and preference weight one.
The exit option has rank zero and weight one, and there
is no cardinality constraint.
Write $R_i$ for the resulting revenue function, and define
\[
    A_i=\{(j,t):Z_{ijt}=1\},
    \qquad
    H_i=\{(j,t):r_{ij}\ge\tau_i\}.
\]
By construction, $A_i\subseteq H_i$.

The ranked model chooses uniformly among the products
in the earliest represented round.
These acceptances are real: before the first acceptance
of order $i$, that order is still unmatched.
Hence, $R_i(A_i)$ is the firm's conditional expected revenue
from order $i$.

All products have positive rank, so their distorted revenues
are $r_{ij}$ and their coefficients
$\widehat v/(1+\widehat v)$ equal one.
Thus, Equation~\eqref{eq:broadcast_tau_copies} is precisely
the threshold equation for this ranked MNL instance.
The availability indicators are negatively associated,
and their marginal probabilities satisfy
$z_{ijt}\ge x_{ijt}$ on $H_i$.
Applying Lemma~\ref{lem:threshold_lower_bound} through
the extension in Section~\ref{subsection:ranked_mnl} gives
\[
    \EE[\text{revenue from order }i]
    =\EE[R_i(A_i)]
    \ge\tau_i.
\]
Summing over orders and using
Equation~\eqref{eq:broadcast_global_upper}, we obtain
$
    2\ALG_{\BM}
    \ge2\sum_{i\in\cO}\tau_i
    \ge\LP_{\BM}.
$
Together with Lemma~\ref{lem:broadcast_list_lp}, this proves
the approximation guarantee.

Finally, Lemma~\ref{lem:broadcast_list_lp} provides a
polynomial-time implementation of the LP step.
Each threshold can be computed by sorting the match
revenues, and the sampling, filtering, and broadcast
steps take polynomial time.
\end{proof}

\section{MNL prophet inequality}\label{section:mnl_prophet}

In the MNL prophet problem, introduced by \citet{goyal2023mnl},
a firm constructs an assortment as products arrive sequentially.
The firm learns each product's preference weight and revenue upon
arrival and must immediately decide whether to include it.
These decisions are irrevocable, and the final assortment is
subject to a cardinality constraint. The objective is to maximize
the expected revenue of the resulting assortment.

\paragraph{Model.}
Each product $i\in[n]$ has a random parameter pair $(v_i,r_i)$
drawn from a known distribution with finite support in
$\RR_+^2$, independently across products.
The preference weight and revenue of a given
product may be dependent.
When product $i$ arrives, the firm observes $(v_i,r_i)$ and
irrevocably accepts or rejects it.
The final assortment $T$ contains at most $k$ accepted products
and yields revenue
\[
    R_{\mathbf{r},\mathbf{v}}(T)
    =
    \frac{\sum_{i\in T}r_iv_i}
         {1+\sum_{i\in T}v_i}.
\]
We allow an almighty adversary to determine the arrival order:
the adversary observes all parameter realizations and all
randomness used by the firm's policy before choosing this order.

The usual prophet observes all parameter realizations before
choosing its best feasible assortment, with the parameter pairs
drawn independently across products.
Against this benchmark, \citet{goyal2023mnl} establish a tight
$1/2$ guarantee under a cardinality constraint, even against
an almighty adversary.

We consider a stronger benchmark, the magician.
Before the parameters are drawn, the magician may choose any
joint distribution of the parameter pairs that preserves the
prescribed distribution of each pair $(v_i,r_i)$.
After observing all realizations, it chooses its best feasible
assortment.
Write $\OPT_{\M}$ for its optimal expected revenue.
Since the independent joint distribution is a feasible choice
for the magician, $\OPT_{\M}$ is at least the usual prophet's
expected revenue.
The firm's parameter pairs remain independent across products;
the additional freedom to choose their dependence belongs
only to the benchmark.

Applying the framework of Section~\ref{section:main_result},
we obtain a static threshold policy that achieves a $1/2$
guarantee against the magician, even against an almighty
adversary. Thus, we retain the tight approximation factor
of \citet{goyal2023mnl} while strengthening the benchmark.

\begin{theorem}\label{thm:prophet_mnl}
Under a cardinality constraint, there exists a static threshold
policy whose expected revenue is at least $\OPT_{\M}/2$,
even against an almighty adversary.
\end{theorem}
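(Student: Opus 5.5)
We reduce the problem to the single-customer framework of Section~\ref{section:main_result} by treating each realizable parameter pair as a separate product. For each $i\in[n]$ and each support point $(v,r)$ of the distribution of $(v_i,r_i)$, introduce a product $(i,v,r)$ with preference weight $v$, revenue $r$, and reference availability probability $x_{(i,v,r)}=\PP((v_i,r_i)=(v,r))$. The product $(i,v,r)$ is available exactly when product $i$ realizes the pair $(v,r)$. The finite-support assumption keeps this expanded instance polynomial in the input.

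\emph{Upper bound.} Fix any joint distribution the magician may choose. Its realized available set contains at most one copy per original product, and each copy's marginal probability equals $x_{(i,v,r)}$. The distribution of the available set therefore lies in $\cP(\mathbf{x})$ for the expanded instance. The magician's optimal assortment of at most $k$ realized products is an assortment of at most $k$ available copies, so $\OPT_{\M}\le \max_{\alpha\in\cP(\mathbf{x})}\EE_{A\sim\alpha}[R_k^+(A)]\le\LP(\mathbf{x},\widehat{\mathbf v},\widehat{\mathbf r})$ by Lemma~\ref{lem:magician_upper_bound}.

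\emph{Policy.} Compute $\tau$ from Equation~\eqref{eq:tau} on the expanded instance, with distorted weights and revenues depending only on $(v,r)$ and $k$. The static policy accepts an arriving product $i$ with realization $(v_i,r_i)$ if and only if $\frac{kv_i}{1+kv_i}r_i\ge\tau$ (with $r_i\ge\tau$ when $k=\infty$) and fewer than $k$ products have been accepted so far.

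\emph{Lower bound.} Under the true independent distribution, the set $B$ of eligible available copies consists of at most one indicator per original product, and these groups are independent across products. By the closure properties of negative association~\citep{dubhashi1996balls}, the indicators of $B$ are negatively associated with marginals exactly $\mathbf{x}$. Lemma~\ref{lem:threshold_lower_bound} then gives $\EE[R_k^-(B)]\ge\tau\ge\tfrac12\LP\ge\tfrac12\OPT_{\M}$.

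The remaining step is to show that the firm's realized revenue is at least $R_k^-(B)$ for every arrival order and every realization, which would handle the almighty adversary. If $|B|\le k$, the firm accepts all of $B$ and earns $R(B)=R_k^-(B)$. If $|B|>k$, the firm accepts exactly $k$ products of $B$, chosen by the adversary's order. Any such set $T\subseteq B$ with $|T|=k$ satisfies $R(T)\ge\min_{T'\subseteq B,|T'|=k}R(T')=R_k^-(B)$. The main obstacle I anticipate is verifying this carefully, together with the fact that the firm's independent acceptances never exceed $k$. Both are immediate from the definition of adversarial truncation, which is exactly why the framework was built with $R_k^-$. The remaining point is checking that duplicated copies of one product do not break negative association, and the "at most one indicator per group" argument handles this.
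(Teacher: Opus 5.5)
Your proposal is correct and follows the paper's proof essentially step for step. It uses the same one-copy-per-support-point construction, the same negative-association argument via \citet{dubhashi1996balls}, Lemma~\ref{lem:magician_upper_bound} on the copy instance to bound $\OPT_{\M}$, Lemma~\ref{lem:threshold_lower_bound} to get $\EE[R_k^-(B)]\ge\tau$, and the same pointwise observation that the policy accepts exactly $\min\{|B|,k\}$ products of $B$ under every arrival order, which handles the almighty adversary.
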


\paragraph{Our solution.}
We apply the framework of Section~\ref{section:main_result}
by representing each possible realization of a product's
parameters as a separate product copy.
Exactly one copy of each original product is available.
This representation allows us to use the magician upper bound
and the threshold guarantee from Section~\ref{section:main_result}.

For each parameter pair $(v_i,r_i)$, let $\widehat v_i$
and $\widehat r_i$ be the distorted parameters defined in
Section~\ref{subsection:distorted_mnl_choice_model}, using
cardinality limit $k$.
Before any products arrive, the firm computes a nonnegative solution $\tau$ to
\begin{equation}\label{eq:prophet_threshold}
    \tau
    =
    \sum_{i\in[n]}
    \EE\left[
        \frac{\widehat v_i}{1+\widehat v_i}
        (\widehat r_i-\tau)^+
    \right].
\end{equation}
The expectation is taken over the distribution
of each product's parameter pair.
Since these distributions have finite support, the threshold
can be computed in polynomial time in their total support
size by sorting the distorted revenues and solving the
corresponding linear equation.

When product $i$ arrives, the firm accepts it if
$\widehat r_i\ge\tau$ and fewer than $k$ products have
already been accepted.
Thus, the policy uses a fixed threshold throughout the
arrival process.

\begin{proof}[Proof of Theorem~\ref{thm:prophet_mnl}]
Let $\ALG$ denote the expected revenue of the threshold policy.
For each product $i$, create one copy $(i,\omega)$ for every
parameter pair $\omega$ in its support, with the corresponding
preference weight and revenue.
Let $A$ be the random set containing the realized copy of
each original product, and let $\mathbf{x}$ be its vector
of marginal availability probabilities.

Within each product's family, exactly one copy is available,
so the availability indicators are negatively associated.
The families corresponding to different products are independent.
Consequently, the full vector of availability indicators is
negatively associated \citep{dubhashi1996balls}.

Every joint parameter distribution available to the magician
induces a distribution of copies in $\cP(\mathbf{x})$.
The magician in the copy instance may choose any distribution
in $\cP(\mathbf{x})$, so its value upper-bounds
$\OPT_{\M}$.
Writing $\widehat{\mathbf{v}}$ and $\widehat{\mathbf{r}}$
for the distorted parameters of the copies,
Lemma~\ref{lem:magician_upper_bound} gives
\[
    \OPT_{\M}
    \le
    \LP(\mathbf{x},\widehat{\mathbf{v}},
        \widehat{\mathbf{r}}).
\]

The threshold equation for the copy instance is exactly
Equation~\eqref{eq:prophet_threshold}.
Let $H$ be the set of copies whose distorted revenue is
at least $\tau$, and let
\[
    B=\{i\in[n]:\widehat r_i\ge\tau\}
\]
be the random set of eligible original products.
The available copies in $A\cap H$ correspond exactly
to the realized products in $B$.
Applying Lemma~\ref{lem:threshold_lower_bound} to the
copy instance therefore yields
\[
    \LP(\mathbf{x},\widehat{\mathbf{v}},
        \widehat{\mathbf{r}})
    \le
    2\tau
    \le
    2\EE[R_k^-(B)].
\]

For every realization and every arrival order, the policy
accepts exactly $\min\{|B|,k\}$ products from $B$.
By the definition of $R_k^-(B)$, its revenue is therefore
at least $R_k^-(B)$.
This pointwise comparison remains valid when the arrival
order is chosen by an almighty adversary.
Combining the bounds gives
\[
    \ALG
    \ge
    \EE[R_k^-(B)]
    \ge
    \tau
    \ge
    \frac12\OPT_{\M},
\]
which proves the theorem.
\end{proof}

\section{Conclusion}
We developed a unified framework for assortment optimization
and choice-based matching under the MNL model.
A simple threshold policy, analyzed through a compact LP
relaxation, achieves a tight $1/2$ guarantee against a benchmark
that may exploit both customization and control over dependence,
even under cardinality constraints with adversarial truncation.
Across our applications, this framework yields improved
approximation factors, extensions to more general settings,
and guarantees against stronger benchmarks through a common
analysis.

The applications studied here illustrate several ways to use
the framework. We hope researchers will find it useful as a
tool for other assortment and matching problems.
A natural starting point is to connect a problem-specific
LP relaxation to our single-customer model, allowing the
threshold guarantee to guide both algorithm design and
analysis. Exploring such connections may reveal further
settings in which simple policies achieve strong guarantees.

\bibliographystyle{plainnat}
\bibliography{bibliography}

\clearpage
\appendix
\section{Tight lower bounds for the pairwise gaps}\label{appendix:benchmark_gaps}
In this section, we prove tight lower bounds for the three pairwise gaps defined in Section~\ref{section:introduction}.
Theorem~\ref{thm:mnl} bounds all three gaps by $2$.
We provide a matching lower bound for each gap.

\subsection{Gap between the committed and delayed firms}
In this section, we prove that the gap between the committed and delayed firms is $2$, even without a cardinality constraint.
Recall that the gap between the committed and delayed firms is defined as the supremum of $\OPT_{\D}/\OPT_{\C}$ over all instances: it compares a firm that chooses its assortment after observing independent product availabilities with one that commits before observing them.
These values are
\[
    \OPT_{\D}=\EE_{A\sim\cI({\mathbf{x}})}[R_{\infty}^+(A)],
    \qquad
    \OPT_{\C}=\max_{S\subseteq[n]}\EE_{A\sim\cI({\mathbf{x}})}[R(S\cap A)].
\]
Fix $0<\varepsilon<1$ and take two products with no cardinality constraint, so $k=\infty$.
Set
\[
    {\mathbf{x}}=(1,\varepsilon),\qquad
    {\mathbf{v}}=(\varepsilon^{-1},1),\qquad
    {\mathbf{r}}=(1,2\varepsilon^{-1}).
\]
Product $1$ is always available, whereas product $2$ is available with probability $\varepsilon$.
The three nonempty assortments have revenues
\[
    R(\{1\})=\frac1{1+\varepsilon},\qquad
    R(\{2\})=\frac1\varepsilon,\qquad
    R(\{1,2\})=\frac3{1+2\varepsilon}.
\]
Committing to $\{1\}$ gives $1/(1+\varepsilon)$, while committing to $\{2\}$ gives $\varepsilon/\varepsilon=1$.
Committing to both products gives
\begin{align*}
    \EE_{A\sim\cI({\mathbf{x}})}[R(A)]
    &=\frac{1-\varepsilon}{1+\varepsilon}
      +\frac{3\varepsilon}{1+2\varepsilon}=1+\frac{\varepsilon(1-\varepsilon)}
      {(1+\varepsilon)(1+2\varepsilon)}
     >1.
\end{align*}
Thus the best committed assortment contains both products, and
\[
    \OPT_{\C}
    =1+\frac{\varepsilon(1-\varepsilon)}
      {(1+\varepsilon)(1+2\varepsilon)}.
\]
When product $2$ is available, its singleton revenue $1/\varepsilon$ exceeds both $1/(1+\varepsilon)$ and $3/(1+2\varepsilon)$, since $0<\varepsilon<1$.
The delayed firm therefore chooses product $2$ alone when it is available and product $1$ otherwise.
Its value is
\[
    \OPT_{\D}
    =\frac{1-\varepsilon}{1+\varepsilon}
      +\varepsilon\frac1\varepsilon
    =\frac2{1+\varepsilon}.
\]
Consequently,
\[
    \frac{\OPT_{\D}}{\OPT_{\C}}
    =\frac{2(1+2\varepsilon)}{1+4\varepsilon+\varepsilon^2}
    \longrightarrow2
    \qquad(\varepsilon\downarrow0).
\]
This proves the lower bound for the gap between the committed and delayed firms.
The gap between the committed firm and the magician also equals two, since $\OPT_{\M}\ge\OPT_{\D}$ and Theorem~\ref{thm:mnl} bounds this gap by $2$.

\subsection{Gap between the delayed firm and the magician}
The following construction is a variant of the
correlation-gap tightness example in
\citet[Appendix~D]{ahmadnejadsaein2025adaptive}.
In this section, we prove that the gap between the delayed firm and the magician is $2$ even without a cardinality constraint.
Recall that the gap between the delayed firm and the magician is defined as the supremum of $\OPT_{\M}/\OPT_{\D}$ over all instances: it compares the magician's best joint availability distribution with independent product availabilities, with the firm choosing its best feasible subset in both cases.
The two values are
\[
    \OPT_{\M}=\max_{\alpha\in\cP({\mathbf{x}})}\EE_{A\sim\alpha}[R_{\infty}^+(A)],
    \qquad
    \OPT_{\D}=\EE_{A\sim\cI({\mathbf{x}})}[R_{\infty}^+(A)].
\]

Take a large integer $m$, put $n=m+1$, and impose no cardinality constraint, so $k=\infty$.
For the first $m$ products and the last product, respectively, set
\[
    v_i=m^{-2},\quad r_i=m^{3/2},\quad x_i=m^{-1/2}
    \quad(i\in[m]),
    \qquad
    v_n=m,\quad r_n=1,\quad x_n=1.
\]
We call the first $m$ products the rare products.
Product $n$ is always available and has singleton revenue $m/(m+1)$.
An assortment of $q\in\{0,\ldots,m\}$ products from $[m]$ has revenue
\[
    \frac{q/\sqrt m}{1+q/m^2}.
\]
This expression increases with $q$, since, for $0\le q<m$,
\[
    \frac{(q+1)/\sqrt m}{1+(q+1)/m^2}
    -\frac{q/\sqrt m}{1+q/m^2}
    =\frac{1}{\sqrt m\,(1+q/m^2)(1+(q+1)/m^2)}>0.
\]
An assortment containing product $n$ and $q$ products from $[m]$ has revenue
\begin{align*}
    \frac{m+q/\sqrt m}{1+m+q/m^2}
    &=\frac{m}{1+m+q/m^2}\cdot1
      +\frac{1+q/m^2}{1+m+q/m^2}
       \frac{q/\sqrt m}{1+q/m^2}\le\max\left\{1,\frac{q/\sqrt m}{1+q/m^2}\right\}.
\end{align*}
The equality writes this revenue as a weighted average of $1$ and the revenue without product $n$; the two coefficients are positive and sum to one.
Thus, for a set $B$ containing product $n$ and $K=|B\cap[m]|$ other products,
\begin{equation}\label{eq:correlation_assortment_bound}
    R_{\infty}^+(B)
    \le\max\left\{1,\frac{K/\sqrt m}{1+K/m^2}\right\}
    \le\max\left\{1,\frac K{\sqrt m}\right\}.
\end{equation}
The first inequality follows by considering assortments with and without product $n$ and using the preceding increase in $q$.
Define the law $\alpha$ of the subsets of $[n]$ by
\[
    \alpha_{[n]}=m^{-1/2},\qquad
    \alpha_{\{n\}}=1-m^{-1/2},\qquad
    \alpha_S=0\quad\text{for all other }S\subseteq[n].
\]
All rare products are available together with probability $m^{-1/2}$, while product $n$ is always available.
The probabilities sum to one, and every inclusion marginal probability equals the corresponding coordinate of ${\mathbf{x}}$, so $\alpha\in\cP({\mathbf{x}})$.
When all products are available, offering $[m]$ earns $\sqrt m/(1+1/m)$.
This value exceeds $1$ for $m\ge4$.
When only product $n$ is available, the optimal revenue is $m/(m+1)$.
Hence
\begin{align*}
    \OPT_{\M}
    &\ge\EE_{A\sim\alpha}[R_{\infty}^+(A)]\ge(1-m^{-1/2})\frac{m}{m+1}
      +m^{-1/2}\frac{\sqrt m}{1+1/m}=\frac{m}{m+1}\bigl(2-m^{-1/2}\bigr).
\end{align*}
Under independent availability, draw $A\sim\cI(\mathbf{x})$ and set $K=|A\cap[m]|$.
We have
\[
    K\sim\operatorname{Bin}(m,m^{-1/2}),\qquad
    \EE[K]=\sqrt m,\qquad
    \EE\bigl[(K-\sqrt m)^2\bigr]
    =\sqrt m\,(1-m^{-1/2}).
\]
Therefore,
\begin{align*}
    \OPT_{\D}
    &\le\EE\left[\max\left\{1,\frac K{\sqrt m}\right\}\right]
     =1+\frac{\EE[(K-\sqrt m)^+]}{\sqrt m}\\
    &\le1+\frac{\EE|K-\sqrt m|}{\sqrt m}
     \le1+\sqrt{\frac{\EE[(K-\sqrt m)^2]}{m}}\\
    &=1+m^{-1/4}\sqrt{1-m^{-1/2}}
     \le1+m^{-1/4}.
\end{align*}
The first inequality uses Equation~\eqref{eq:correlation_assortment_bound}, the second uses $a^+\le|a|$ for $a\in\RR$, and the third uses the Cauchy--Schwarz inequality.
Finally, we get
\[
    \frac{\frac{m}{m+1}(2-m^{-1/2})}{1+m^{-1/4}}
    \le\frac{\OPT_{\M}}{\OPT_{\D}}
    \le\frac{\OPT_{\M}}{\OPT_{\C}}.
\]
The left side tends to $2$ as $m\to\infty$, so Theorem~\ref{thm:mnl} gives a gap of $2$ between the delayed firm and the magician.

\section{Proof of Lemma \ref{lem:standard_choice_polytope} }
\label{appendix:standard_choice_polytope}

Fix a customer $i\in\cC$.
Suppliers with $v_{ij}=0$ have choice probability zero under every assortment.
Removing these suppliers from an assortment preserves its choice-probability vector and its feasibility.
Their coordinates are also fixed to zero in $\cA_i$.
It therefore suffices to prove the claim after restricting to suppliers with positive preference weights.
For the remainder of the proof, assume $v_{ij}>0$ for every $j\in\cS$.

For a feasible assortment $A\subseteq\cS$, let
$x_{ij}(A)=\phi_{\mathbf{v}_i}(j,A)$.
For every $j\in A$,
\[
    \frac{x_{ij}(A)}{v_{ij}}
    +
    \sum_{h\in\cS}x_{ih}(A)
    =
    \frac{1+V_i(A)}{1+V_i(A)}
    =
    1.
\]
For $j\notin A$, the first term is zero and the sum is at most one, so the corresponding constraint also holds.
Furthermore,
\[
    \sum_{j\in\cS}\frac{x_{ij}(A)}{v_{ij}}
    +
    k\sum_{h\in\cS}x_{ih}(A)
    =
    \frac{|A|+kV_i(A)}{1+V_i(A)}
    \le
    k,
\]
where the inequality uses $|A|\le k$.
Thus, the choice-probability vector of every feasible assortment belongs to $\cA_i$.
Since $\cA_i$ is convex, it contains the convex hull of these vectors.

Conversely, take $\mathbf{x}_i\in\cA_i$ and set
$\delta_i=1-\sum_{h\in\cS}x_{ih}$.
Nonnegativity and the cardinality constraint imply
$\sum_{h\in\cS}x_{ih}\le1$, so $\delta_i\ge0$.
If $\delta_i=0$, the individual constraints force $x_{ij}=0$ for every $j$, contradicting $\sum_{h\in\cS}x_{ih}=1$.
Hence $\delta_i>0$.

The individual constraints and the cardinality constraint now give
\[
    0
    \le
    \frac{x_{ij}}{v_{ij}\delta_i}
    \le
    1
    \qquad(j\in\cS),
    \qquad
    \sum_{j\in\cS}
    \frac{x_{ij}}{v_{ij}\delta_i}
    \le
    k.
\]
Thus, the vector with coordinates $x_{ij}/(v_{ij}\delta_i)$ belongs to
\[
    \left\{
        \mathbf{y}\in[0,1]^{\cS}:
        \sum_{j\in\cS}y_j\le k
    \right\}.
\]
Every extreme point of this polytope is an incidence vector of an assortment of size at most $k$.
Indeed, a point with two fractional coordinates can be perturbed in both directions by increasing one coordinate and decreasing the other.
A point with exactly one fractional coordinate can be perturbed in that coordinate if the sum constraint is slack.
If the sum constraint is tight, exactly one fractional coordinate is impossible because $k$ is an integer.

Consequently, there are nonnegative coefficients
$(\lambda_A)_{A\subseteq\cS:\,|A|\le k}$ satisfying
\begin{equation}\label{eq:standard_decomposition}
    \frac{x_{ij}}{v_{ij}\delta_i}
    =
    \sum_{A\ni j}\lambda_A
    \qquad(j\in\cS),
    \qquad
    \sum_{A:\,|A|\le k}\lambda_A
    =
    1.
\end{equation}
All sums over $A$ below range over assortments $A\subseteq\cS$ with $|A|\le k$.
Define
\begin{equation}\label{eq:standard_assortment_law}
    \alpha_A
    =
    \lambda_A\delta_i\bigl(1+V_i(A)\bigr).
\end{equation}
This reweighting converts the decomposition of the scaled vector into a distribution over MNL assortment choice vectors.
The coefficients are nonnegative and sum to one, since
\begin{align*}
    \sum_A\alpha_A
    =
    \delta_i\sum_A\lambda_A
    +
    \delta_i\sum_A\lambda_A V_i(A)
    =
    \delta_i
    +
    \delta_i
    \sum_{j\in\cS}
    v_{ij}\sum_{A\ni j}\lambda_A
    =
    \delta_i+\sum_{j\in\cS}x_{ij}
    =
    1.
\end{align*}
The second equality uses $\sum_A\lambda_A=1$ and exchanges the order of summation.
The third uses Equation~\eqref{eq:standard_decomposition}, and the final equality follows from the definition of $\delta_i$.

Moreover, for every supplier $j$,
\begin{align*}
    \sum_A\alpha_A\phi_{\mathbf{v}_i}(j,A)
    =
    \sum_{A\ni j}
    \alpha_A\frac{v_{ij}}{1+V_i(A)}
    =
    \delta_i v_{ij}
    \sum_{A\ni j}\lambda_A
    =
    x_{ij}.
\end{align*}
The second equality uses Equation~\eqref{eq:standard_assortment_law}, and the third uses Equation~\eqref{eq:standard_decomposition}.
Thus, sampling $A$ with probabilities $(\alpha_A)$ generates the marginal choice probabilities $\mathbf{x}_i$.
This proves the convex-hull characterization.

To compute the coefficients $(\lambda_A)$, consider the linear feasibility problem in Equation~\eqref{eq:standard_decomposition}, with one nonnegative variable for each feasible assortment.
Its dual separation problem reduces to selecting at most $k$ suppliers with the largest positive dual coefficients.
This optimization can be performed by sorting.
The optimization--separation equivalence therefore gives a polynomial-time procedure for computing a feasible decomposition with polynomially many nonzero coefficients \citep{grotschel1984geometric}.
Applying Equation~\eqref{eq:standard_assortment_law} preserves this support and gives the required assortment distribution in polynomial time.

\section{Proof of Theorem~\ref{thm:fully_static}}
\label{appendix:fully_static_proof}
Fix an optimal deterministic fully static policy, which
exists because randomization over assortment profiles
cannot improve the optimum.
Let $\OPT_{\FS}$ denote the fully static optimum and $\ALG_{\FS}$ the expected revenue of Algorithm~\ref{alg:fully_static_tsao}.
We prove
\[
    \OPT_{\FS}\le\LP_{\FS}\le4\ALG_{\FS}.
\]

\emph{Upper bound on the fully static optimum.}
Let $S_i$ and $C_j$ denote the customer and supplier
assortments of this policy, respectively. 
Let $p_{ij}$ be the probability that customer $i$ chooses supplier $j$, and let $q_{ij}$ be the probability that supplier $j$ chooses customer $i$.
Define
\[
    x_{ij}
    =
    \one\{j\in S_i,\ i\in C_j\}
    \frac{(1+v_{ij})(1+w_{ji})}
         {(1+V_i(S_i))(1+W_j(C_j))}.
\]
Then $0\le x_{ij}\le1$ and $a_{ij}b_{ij}x_{ij}=p_{ij}q_{ij}$.
For each customer $i$, we have
\begin{align*}
    \sum_{j\in\cS}\widehat a_{ij}x_{ij}
    &=
    \sum_{j\in S_i}
    \one\{i\in C_j\}
    \frac{v_{ij}+1/k}{1+V_i(S_i)}
    \frac{1+w_{ji}}{1+W_j(C_j)}\le
    \sum_{j\in S_i}\frac{v_{ij}+1/k}{1+V_i(S_i)}
    =
    \frac{V_i(S_i)+|S_i|/k}{1+V_i(S_i)}
    \le1.
\end{align*}
The first inequality uses $w_{ji}\le W_j(C_j)$ for $i\in C_j$, and the second uses $|S_i|\le k$.
The same calculation on the supplier side gives
\[
    \sum_{i\in\cC}\widehat b_{ij}x_{ij}\le1
    \qquad(j\in\cS).
\]
Thus, $\mathbf{x}$ is feasible for LP~\eqref{eq:lp_fully_static}.
Since agents choose independently, its objective value is
\[
    \sum_{i\in\cC}\sum_{j\in\cS}r_{ij}a_{ij}b_{ij}x_{ij}
    =
    \sum_{i\in\cC}\sum_{j\in\cS}r_{ij}p_{ij}q_{ij}
    =\OPT_{\FS}.
\]
This proves $\OPT_{\FS}\le\LP_{\FS}$.

\emph{Customer-side guarantee.}
Take an optimal solution $\mathbf{x}$ of LP~\eqref{eq:lp_fully_static} and run Algorithm~\ref{alg:fully_static_tsao}.
For every pair $(i,j)$, let
\[
    c_{ij}
    =\phi_{\mathbf{v}_i}(j,S_i)
    =\one\{j\in S_i\}\frac{v_{ij}}{1+V_i(S_i)}.
\]
Conditional on $S_i$, this is the probability that customer $i$ chooses supplier $j$.
We first prove that
\[
    \LP_{\FS}
    \le
    2\EE\left[\sum_{i\in\cC}\sum_{j\in\cS}r_{ij}b_{ij}c_{ij}\right].
\]
Fix a customer $i$ and apply Lemma~\ref{lem:threshold_lower_bound} with reference marginal probabilities $\mathbf{x}_i$, preference weights $\mathbf{v}_i$, revenues $(r_{ij}b_{ij})_{j\in\cS}$, and cardinality limit $k$.
The customer constraint in LP~\eqref{eq:lp_fully_static} gives $\sum_{j\in\cS}x_{ij}\widehat a_{ij}\le1$.
Consequently, the individual bounds in LP~\eqref{eq:main_primal} already imply its total-choice constraint, and
\begin{align*}
    &\LP\left(
        \mathbf{x}_i,
        (\widehat v_{ij})_{j\in\cS},
        \left(\frac{kv_{ij}}{1+kv_{ij}}r_{ij}b_{ij}\right)_{j\in\cS}
    \right)=
    \sum_{j\in\cS}x_{ij}\widehat a_{ij}
    \frac{kv_{ij}}{1+kv_{ij}}r_{ij}b_{ij}
    =
    \sum_{j\in\cS}r_{ij}a_{ij}b_{ij}x_{ij}.
\end{align*}
The second equality uses $\widehat a_{ij}kv_{ij}/(1+kv_{ij})=a_{ij}$.

The threshold $\tau_i$ and eligible set $H_i$ in Algorithm~\ref{alg:fully_static_tsao} are those of Lemma~\ref{lem:threshold_lower_bound} for these parameters.
The set $A_i$ has independent inclusion indicators, so its distribution is negatively associated.
The assortment $S_i$ retains exactly $\min\{|A_i\cap H_i|,k\}$ suppliers and therefore earns at least the revenue under adversarial truncation.
Lemma~\ref{lem:threshold_lower_bound} yields
\[
    \sum_{j\in\cS}r_{ij}a_{ij}b_{ij}x_{ij}
    \le2\tau_i
    \le
    2\EE\left[\sum_{j\in\cS}r_{ij}b_{ij}c_{ij}\right].
\]
Summing over customers gives
\begin{equation}\label{eq:fully_static_customer_half}
    \LP_{\FS}
    \le
    2\EE\left[\sum_{i\in\cC}\sum_{j\in\cS}r_{ij}b_{ij}c_{ij}\right].
\end{equation}

\emph{Supplier-side guarantee.}
We next prove that
\[
    \EE\left[\sum_{i\in\cC}\sum_{j\in\cS}r_{ij}b_{ij}c_{ij}\right]
    \le2\ALG_{\FS}.
\]
Fix a supplier $j$.
For each customer $i$, fix $\widetilde r_{ij}\ge 0$ such that
\[
    \EE\left[r_{ij}c_{ij}\mid\one\{j\in S_i\}\right]
    =\one\{j\in S_i\}\widetilde r_{ij}.
\]
The customer assortments are independent, so the events $(j\in S_i)_{i\in\cC}$ are independent.
Moreover, $\PP(j\in S_i)\le x_{ij}$ because $S_i\subseteq A_i$.
The supplier constraint in LP~\eqref{eq:lp_fully_static} therefore implies
\[
    \sum_{i\in\cC}\PP(j\in S_i)\widehat b_{ij}
    \le
    \sum_{i\in\cC}x_{ij}\widehat b_{ij}
    \le1.
\]
Apply Lemma~\ref{lem:threshold_lower_bound} with marginal probabilities $(\PP(j\in S_i))_{i\in\cC}$, preference weights $\mathbf{w}_j$, revenues $(\widetilde r_{ij})_{i\in\cC}$, and cardinality limit $k$.
These independent availability events satisfy the negative-association assumption.
As on the customer side, the total-choice constraint of LP~\eqref{eq:main_primal} is redundant, so
\begin{align*}
    \LP\left(
        (\PP(j\in S_i))_{i\in\cC},
        \widehat{\mathbf{w}}_j,
        \left(\frac{kw_{ji}}{1+kw_{ji}}\widetilde r_{ij}\right)_{i\in\cC}
    \right)&=
    \sum_{i\in\cC}\PP(j\in S_i)\widehat b_{ij}
    \frac{kw_{ji}}{1+kw_{ji}}\widetilde r_{ij}
    \\&=
    \sum_{i\in\cC}b_{ij}\PP(j\in S_i)\widetilde r_{ij}
    =
    \sum_{i\in\cC}b_{ij}\EE[r_{ij}c_{ij}].
\end{align*}
The second equality uses $\widehat b_{ij}kw_{ji}/(1+kw_{ji})=b_{ij}$, and the third uses the definition of $\widetilde r_{ij}$.
Lemma~\ref{lem:threshold_lower_bound} therefore gives a supplier threshold policy, based only on the set $\{i:j\in S_i\}$, whose expected revenue with coefficients $(\widetilde r_{ij})_{i\in\cC}$ is at least half of this quantity.

For any supplier rule based only on $\{i:j\in S_i\}$, independence of the customer assortments gives
\[
    \EE\left[r_{ij}c_{ij}\mid\{h:j\in S_h\}\right]
    =\one\{j\in S_i\}\widetilde r_{ij}.
\]
Its conditional expected revenue therefore equals its revenue with coefficients $(\widetilde r_{ij})_{i\in\cC}$.
Algorithm~\ref{alg:fully_static_tsao} observes all sampled customer assortments and chooses the best feasible $C_j$ for the realized coefficients $(r_{ij}c_{ij})_{i\in\cC}$.
It earns at least as much in expectation as the supplier threshold policy.
Summing over suppliers yields
\[
    \EE\left[\sum_{i\in\cC}\sum_{j\in\cS}r_{ij}b_{ij}c_{ij}\right]
    \le2\ALG_{\FS}.
\]
Combining this inequality with Equation~\eqref{eq:fully_static_customer_half} gives $\LP_{\FS}\le4\ALG_{\FS}$.

Finally, all assortments in Algorithm~\ref{alg:fully_static_tsao} have size at most $k$ and are fixed before agents choose, so the policy is feasible and fully static.
LP~\eqref{eq:lp_fully_static} has polynomial size, and each customer threshold can be computed by sorting the distorted revenues.
Each supplier assortment solves a cardinality-constrained MNL revenue maximization problem, which can be solved in polynomial time~\citep{rusmevichientong2010dynamic}.
Hence, Algorithm~\ref{alg:fully_static_tsao} runs in polynomial time.

\section{Proof of Lemma~\ref{lem:broadcast_list_lp}}
\label{appendix:broadcast_list_lp}

We first show that LP~\eqref{eq:broadcast_list_lp}
upper bounds the adaptive optimum, and then establish
its polynomial-time solvability.

\vspace{-3mm}
\paragraph{Upper bound on the adaptive optimum.}
Fix a randomized adaptive policy $\pi$.
We may define $\pi$ arbitrarily on histories of probability
zero.
Implement the uniform assignment rule using independent
uniform priorities for the drivers in each round:
an order is assigned to its highest-priority accepting
driver.
The priorities are independent across rounds and independent
of the acceptance variables and the policy's randomness.

Fix a driver $j\in\cD$.
We construct a random list $L_j$ using a simulated
execution of $\pi$ over the $T$ rounds.
Whenever $j$ is offered order $i$ in round $t$, first
reveal the other drivers' acceptances and the priorities
for that round.
If $j$ would win order $i$ upon accepting, record $i$
in position $t$ of $L_j$ and continue the policy along
the history in which $j$ rejects the offer, without
revealing its acceptance variable for $(i,j)$.
Otherwise, record $\emptyset$ and reveal $j$'s response
as usual before continuing the policy.
Also record $\emptyset$ in any round in which $j$
receives no offer.
Since $\pi$ never offers the same pair twice,
$L_j\in\cL$.

The construction does not inspect $j$'s acceptance variables
for the orders retained in $L_j$.
Moreover, these variables cannot be inspected later,
because pairs are never offered twice.
Consequently, conditional on $L_j=L$, the acceptance
variables associated with the orders in $L$ remain
independent, with their original probabilities
$(p_{ij})_{i\in L}$.

Couple the actual execution of $\pi$ with this
simulated execution using the same policy randomness,
priorities, and acceptance variables.
The two histories coincide until $j$ accepts its first
retained order.
At that point, $j$ wins the order by construction and
leaves the system.
An acceptance of an order not retained in the list
cannot match $j$, since $j$ would lose that order's
assignment lottery.
It follows that
\[
    \PP_\pi
    \bigl(i\text{ is matched to }j\mid L_j=L\bigr)
    =a_{ij}(L).
\]

Set $\lambda_{jL}=\PP(L_j=L)$.
Then
\[
    \PP_\pi(i\text{ is matched to }j)
    =\sum_{L\in\cL}\lambda_{jL}a_{ij}(L)
    =y_{ij}.
\]
The construction is performed separately for each driver;
the resulting list distributions need not be independent.
For every driver $j$, their probabilities satisfy
$\sum_{L\in\cL}\lambda_{jL}=1$.
For every order $i$,
\[
    \sum_{j\in\cD}y_{ij}
    =\PP_\pi(i\text{ is matched})
    \le1.
\]
Thus, $\boldsymbol{\lambda}$ is feasible for
LP~\eqref{eq:broadcast_list_lp}.
Its objective value is
\[
    \sum_{i\in\cO}\sum_{j\in\cD}
    r_{ij}\PP_\pi(i\text{ is matched to }j)
    =\EE[\text{revenue of }\pi].
\]
Since $\pi$ was arbitrary, this proves
$\OPT_{\BM}\le\LP_{\BM}$.

\paragraph{Polynomial-time implementation.}
The dual of LP~\eqref{eq:broadcast_list_lp} is
\begin{align}
    \LP_{\BM}=\min\quad
    &\sum_{i\in\cO}\beta_i+
      \sum_{j\in\cD}\gamma_j
      \label{eq:broadcast_list_dual}\\
    \suchthat\quad
    &\gamma_j\ge
      \sum_{i\in\cO}(r_{ij}-\beta_i)a_{ij}(L)
      &&\forall j\in\cD,\ \forall L\in\cL,
      \label{eq:broadcast_list_dual_constraint}\\
    &\beta_i\ge0
      &&\forall i\in\cO,
      \notag\\
    &\boldsymbol{\gamma}\in\RR^{\cD}.
      \notag
\end{align}
The all-empty list forces $\gamma_j\ge0$ for every driver.
Fix a driver $j$ and a dual vector $\boldsymbol{\beta}$.
Separating
Constraint~\eqref{eq:broadcast_list_dual_constraint}
requires finding a list maximizing
\[
    \sum_{\substack{s\in[T]:\\i_s\ne\emptyset}}
    (r_{i_sj}-\beta_{i_s})p_{i_sj}
    \prod_{h<s}(1-p_{i_hj}).
\]
For any chosen set of orders, this expression is maximized
by placing them in decreasing order of $r_{ij}-\beta_i$.
Indeed, conditional on reaching two consecutive orders
$a$ and $b$, placing $a$ before $b$ rather than $b$
before $a$ changes the expected value by
\[
    p_{aj}p_{bj}
    \bigl[(r_{aj}-\beta_a)-(r_{bj}-\beta_b)\bigr].
\]
The probability of reaching the subsequent orders is
unchanged.
An adjacent-swap argument therefore establishes the
claimed ordering.
The positions of the $\emptyset$ entries do not affect
the objective value.

Reindex the orders so that
\[
    r_{1j}-\beta_1
    \ge\cdots\ge
    r_{|\cO|,j}-\beta_{|\cO|}.
\]
Let $V(s,d)$ be the largest value obtainable from orders
$s,\ldots,|\cO|$ using at most $d$ remaining list positions.
For $s\le|\cO|$ and $d\ge1$, we have
\begin{equation}\label{eq:broadcast_list_dp}
    V(s,d)
    =
    \max\left\{
        (r_{sj}-\beta_s)p_{sj}
        +(1-p_{sj})V(s+1,d-1),\
        V(s+1,d)
    \right\},
\end{equation}
with boundary conditions
\[
    V(|\cO|+1,d)=0,
    \qquad
    V(s,0)=0.
\]
The first term corresponds to including order $s$
before the remaining selected orders; the second
corresponds to omitting it.
After sorting, this dynamic program finds an optimal
list in time $\mathcal{O}(T|\cO|)$.
It therefore provides a polynomial-time separation
procedure for the dual.

The ellipsoid method and the optimization--separation
equivalence~\citep{grotschel1984geometric} solve
LP~\eqref{eq:broadcast_list_dual} and recover an optimal
solution to LP~\eqref{eq:broadcast_list_lp} with
polynomially many nonzero coefficients.
This proves the second claim.

\end{document}